\newtheorem{theorem}{Theorem}[section]
\newtheorem{lemma}[theorem]{Lemma}
\newtheorem{proposition}[theorem]{Proposition}
\newtheorem{corollary}[theorem]{Corollary}
\theoremstyle{definition}
\newtheorem{definition}[theorem]{Definition}
\newtheorem{remark}[theorem]{Remark}
\numberwithin{equation}{section}
\let\g=\gamma
\let\z=\zeta
\let\la=\lambda
\let\ep=\epsilon
\let\va=\varphi
\let\fy=\infty
\def\bbR{\mathbb{R}}
\def\bbC{\mathbb{C}}
\def\calA{\mathcal {A}}
\def\calB{\mathcal {B}}
\def\calH{\mathcal {H}}
\def\calL{\mathcal {L}}
\def\calM{\mathcal {M}}
\def\calS{\mathcal {S}}
\def\scrA{\mathscr{A}}
\def\scrF{\mathscr{F}}
\def\scrV{\mathscr{V}}
\def\scrM{\mathscr{M}}
\def\calN{\mathcal {N}}
\def\calHS{\mathcal {H}\mathcal {S}}
\def\mfV{\mathfrak{V}}
\def\rr{{\mathbb R}}
\def\rd{{{\rr}^d}}
\def\rdd{{{\rr}^{2d}}}
\def\rddd{{{\rr}^{4d}}}
\def\zd{{{\mathbb{Z}}^d}}
\def\zdd{{{\mathbb{Z}}^{2d}}}
\newcommand{\be}{\begin{equation*}}
\newcommand{\ee}{\end{equation*}}
\newcommand{\ben}{\begin{equation}}
\newcommand{\een}{\end{equation}}
\newcommand{\bn}{\begin{enumerate}}
\newcommand{\en}{\end{enumerate}}
\newcommand{\bs}{\backslash}
\newcommand{\lan}{\langle}
\newcommand{\ran}{\rangle}
\def\rr{{\mathbb R}}
\def\cc{{\mathbb C}}
\def\rd{{{\bbR}^d}} \def\rtd{\bbR^{2d}}
\def\hs{{\mathcal{HS}}}
\def\sp{{\mathcal{S}}_p}
\def\mv{{M_v^1}}
\def\lv{L_v^1}
\def\mvd{{M_v^1(\bbR^d)}}
\def\mvs{{(M_v^1)^*}}
\def\mvds{{(M_v^1(\bbR^d))^*}}
\def\mpqm{{M^{p,q}_m}}   
\def\lpqm{L^{p,q}_m}
\def\mpqmd{{M^{p,q}_m(\bbR^d)}}
\def\lpqmtd{L^{p,q}_m(\bbR^{2d})}
\def\bpqm{\calB_{p,q}^m}
\def\bv{\calB_{1}^v}
\def\bvv{\calB_{\fy}^{v^{-1}}}
\begin{document}
\title[A note on the operator window of modulation spaces]
{A note on the operator window of modulation spaces}
\author{WEICHAO GUO}
\address{School of Science, Jimei University, Xiamen, 361021, P.R.China}
\email{weichaoguomath@gmail.com}
\author{GUOPING ZHAO}
\address{School of Mathematics and Statistics, Xiamen University of Technology,
Xiamen, 361024, P.R.China} \email{guopingzhaomath@gmail.com}
\subjclass[2000]{47B10, 42B35}
\keywords{equivalent norm, modulation spaces, operator window. }

\begin{abstract}
Inspired by a recent article \cite[JFAA, 28(2):1-34, (2022)]{Skrettingland2022JoFAaA}, 
this paper is devoted to the study of suitable window class in the framework of bounded linear operators on $L^2(\rd)$.
We establish a natural and complete characterization for the window class such that the corresponding STFT leads to equivalent norms of modulation spaces. The positive bounded linear operators are also characterized
in Cohen's class distributions such that the corresponding quantities form equivalent norms of modulation spaces.
As a generalization, we introduce a family of operator classes corresponding to the operator-valued modulation spaces.
Some applications of our main theorems to the localization operators are also concerned.

\end{abstract}

\maketitle

%\footnote{The second author is the corresponding author.}%

\section{INTRODUCTION}
Modulation space was first introduced by H. Feichtinger \cite{Feichtinger1983TRUoV} in 1983.
Now, it has been proven to be an important function spaces in the field of time-frequency analysis \cite{GrochenigBook2013}.
Moreover, modulation space has been associated with many topics of mathematics such as partial differential equation 
\cite{Benyi2007JFA, WangHudzik2007JDE, M.RuzhanskyM.Sugimoto2012EEoHaST} and classical harmonic analysis 
\cite{GroechenigHeil1999IEOT,GuoFanWuZhao2018SM}.

The purpose of modulation space is to describe the content of the functions or distributions on the time-frequency plane.
To achieve this goal, the short time Fourier transform (STFT) is used to extract the local information of functions or distributions.
More precisely, STFT can be firstly defined on $L^2(\rd)$ by
\be
V_{\va}f(z)=\langle f, \pi(z)\va\rangle_{L^2}\ \ \ (z\in \rdd),
\ee
where the window $\va$ is a function with some good localized properties on the time-frequency plane,
and $\pi(z)$ denotes the time-frequency shift for $z=(x,\xi)$ defined by 
\be
\pi(z)\va(t)=M_{\xi}T_x  \va(t)=e^{2\pi it\cdot\xi}\va(t-x).
\ee
With a suitable window $\va$, the STFT can be well defined for $f$ belonging to the space of tempered distributions $\calS'(\rd)$ or the dual space of $M^1_v(\rd)$ denoted by $(M^1_v(\rd))^*$. 

Let $g_0$ be the normalized Gaussian, i.e.,
\be
g_0(t)=2^{d/4}e^{-\pi|t|^2}.
\ee
We point out that $g_0$ will always work as a suitable window whether $f$ belongs to $L^2(\rd)$, $\calS'(\rd)$ or $\mvds$.
The modulation space can be defined by
\be
M^{p,q}_m(\rd)=\{f\in \mvds: V_{g_0}f\in L^{p,q}_m(\rdd) \},
\ee
endowed with the obvious (quasi-)norm, where $L^{p,q}_m(\rdd)$ are weighted mixed-norm Lebesgue spaces with the weight $m\in \calM_v$.
Here $\calM_v$ denotes the class of all $v$-moderate weight functions, where $v$ is a submultiplicative weight.
See the precise definitions of weight functions in Subsection 2.2.    
Sometimes, we write $M^p_{m}=M^{p,p}_m$ for short.

In the above definition of modulation space, the Gaussian $g_0$ serves as the window. A natural problem is:  can the window $g_0$ be replaced by another suitable function in the definition of modulation space? More precisely, can we give a characterization for all $\va$ satisfying the following
equivalent relation?
\ben\label{Eqi-0}
\|V_{\va}f\|_{\lpqm}\sim_{\va,m,v} \|V_{g_0}f\|_{\lpqm} \ \ \ \text{for all}\  f\in \mpqmd,\ 1\leq p,q \leq \fy, m\in \calM_v(\rdd).
\een 
This problem is not difficult to answer in some sense.
By the fact that 
\be
\bigcup_{\substack { 1\leq p,q\leq \fy,\ m\in \calM_v}}\mpqm=M^{\fy}_{v^{-1}} = \mvs,
\ee
we see that in the definition of STFT, the possible largest class of windows fitting for all $\mpqmd$ mentioned above, is the modulation space $M^1_v(\rd)$, which serves as the test function of $M^{\fy}_{v^{-1}}(\rd)$.
On the other hand,  for all $\va \in M^1_v(\rd)\bs \{0\}$, $\|V_{\va}f\|_{\lpqm(\rdd)}$ defines an equivalent norm on $\mpqmd$ (see \cite[Proposition 11.4.2]{GrochenigBook2013}).
Thus, we conclude that \eqref{Eqi-0} holds if and only if $\va \in M^1_v(\rd)\bs \{0\}$.

Let $\pi(z)^*$ be the Hilbert adjoint of $\pi(z)$ defined by
\be
\pi(z)^*=e^{-2\pi ix\cdot\xi}\pi(-z),\ \ \ z=(x,\xi).
\ee
Note that both $\pi(z)$ and $\pi(z)^*$ are bounded on $\mv$ and can be extended by duality to boundeded operators on $\mvs$.
Write STFT by
\ben\label{lf}
V_{\va}f(z)=\lan f, \pi(z)\va\ran _{\mvs,\mv}=
\langle \pi(z)^*f, \va\rangle_{\mvs,\mv}=:L_{\va}(\pi(z)^*f).
\een
Here, $L_{\va}$ means the obvious bounded linear functional on $\mvs$.
Denote by  $\calHS$ the collection of all Hilbert-Schmidt operators on $L^2(\rd)$, $\calN(L^2,\mv)$ the set of all nuclear operators bounded from $L^2(\rd)$ into $\mvd$. See Subsection 2.3 
for the precise definition of nuclear operators.
In \cite{Skrettingland2022JoFAaA}, the author considers a class of linear operators defined by 
\be
\calN^*=\{S\in \calHS: S^*\in \calN(L^2,\mv)\}, 
\ee
and proves that for $S\in \calN^*\bs \{0\}$ the following  result is valid
\ben\label{Eqi-1}
\|\mfV_Sf\|_{\lpqm(\rdd, L^2)}\sim_{S,m,v} \|V_{g_0}f\|_{\lpqm} \ \ \ \text{for all}\  f\in \mpqm,\ 1\leq p,q \leq \fy, m\in \calM_v(\rdd),
\een 
where $\mfV_Sf: =S\pi(z)^*f$.
More precisely, we recall the conclusion in \cite[Theorem 5.1]{Skrettingland2022JoFAaA} as follows.

\textbf{Theorem A.} Let $S\in \calN^*\bs\{0\}$. For any $1\leq p,q\leq \fy$ and $m\in \calM_v(\rdd)$, we have
\ben\label{thm-A-1}
\frac{\|S\|_{\calHS}^2}{C_v^m\|S^*\|_{\calN}\|g_0\|_{\mvd}}\|V_{g_0}f\|_{\lpqm}
\leq
\|\mathfrak{V}_Sf\|_{\lpqm(\rtd; L^2)}\leq C_v^m\|S^*\|_{\calN}\|V_{g_0}f\|_{\lpqm}.
\een

To see the connection between \eqref{Eqi-0} and \eqref{Eqi-1},
we consider a rank-one operator $S_1=\xi\otimes\va$ in $\eqref{Eqi-1}$,
with $\xi\in L^2$ and $\va\in \mv$. Note that $S_1\in \calN^*$ and 
\be
\|\mfV_{S_1}f\|_{L^2}=\|S_1\pi(z)^*f\|_{L^2}=\|\xi\|_{L^2}|V_{\va}f(z)|, \ \ \ \ \|\mfV_{S_1}f\|_{\lpqm(\rdd, L^2)}=\|\xi\|_{L^2}\|V_{\va}f(z)\|_{\lpqm}.
\ee
From this and \eqref{lf}, the equivalent relation \eqref{Eqi-1} can be regarded as an extension for the window class of modulation space, from ``bounded linear functional on $\mvs$'' to ``bounded linear operator from $\mvs$ into $L^2$''.

Comparing with the nice answer for \eqref{Eqi-0}, we naturally ask a corresponding question for \eqref{Eqi-1}, that is, can we give a characterization about the linear operator $S\in \calL(\mvs,L^2)$ 
such that $\eqref{Eqi-1}$ holds?
More precisely,
can we find the precise subset $\calB$ of $\calL(\mvs,L^2)$,
such that $S\in \calB$ if and only if $\eqref{Eqi-1}$ holds?
Note that $\calL(\mvs,L^2)\subset \calL(L^2)$.
In this paper, we will give a complete characterization for \eqref{Eqi-0} in the framework of $\calL(L^2)$, that is, give the precise subset $\calB$ of $\calL(L^2)$ such that $\eqref{Eqi-1}$ holds.
The assumption of $S\in \calL(L^2)$ is convenient for our proofs of main theorems, and the reader will find that this assumption in Theorems \ref{thm-L2} and \ref{thm-M}
can be reduced to a weaker one, that is, $S\in \calL(\mv, L^2)$. See Proposition \ref{pp-ebsb1} and Remark \ref{rk-S1v}
for more details.

First, we deal with the $L^2(\rd)$ case, which yields a new characterization of $\calHS$.
In this case, we only consider the condition \eqref{Eqi-1} with $p=q=2$ and $m=1$.

\begin{theorem}\label{thm-L2}
	Let $S\in \calL(L^2(\rd))\bs \{0\}$. The following four statements are equivalent:
	\begin{enumerate}
		\item 
		$ \|\mathfrak{V}_Sf\|_{L^2(\rtd;L^2)} \sim \|f\|_{L^2(\rd)}$ for all $f\in L^2(\rd)$;
		\item 
		$ \|\mathfrak{V}_Sf\|_{L^2(\rtd;L^2)} \lesssim \|f\|_{L^2(\rd)}$ for all $f\in L^2(\rd)$;
		\item 
		$ \|\mathfrak{V}_Sg_0\|_{L^2(\rtd;L^2)} <\fy$;
		\item $S\in \hs$.
	\end{enumerate}
	Furthermore, if one of the above statements holds, 
	we have 
	\[ \|\mathfrak{V}_Sf\|_{L^2(\rtd;L^2)} =\|S\|_{\hs} \|f\|_{L^2(\rd)},\ \ \ \|S\|_{\hs}=\|\mathfrak{V}_Sg_0\|_{L^2(\rtd;L^2)}.\] 
	If $\|S\|_{\hs}=1$, the map $f\mapsto \mathfrak{V}_Sf$ is an isometry from $L^2(\rd)$ into $L^2(\rdd,L^2)$.
\end{theorem}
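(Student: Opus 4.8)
The plan is to reduce the whole statement to a single \emph{master identity}, namely
\[
\|\mathfrak{V}_Sf\|_{L^2(\rtd;L^2)}^2 = \|S\|_{\hs}^2\,\|f\|_{L^2(\rd)}^2 \qquad \text{for every } f\in L^2(\rd),
\]
read as an identity in $[0,+\infty]$ that holds for \emph{every} $S\in\calL(L^2(\rd))$, with the convention $\|S\|_{\hs}=+\infty$ when $S\notin\hs$. To obtain it I would fix an orthonormal basis $\{e_k\}_k$ of $L^2(\rd)$ and compute, for each fixed $z\in\rtd$, using Parseval in $L^2(\rd)$ and the adjoint relation $\langle\pi(z)^*f,h\rangle_{L^2}=\langle f,\pi(z)h\rangle_{L^2}$,
\begin{align*}
\|S\pi(z)^*f\|_{L^2}^2 = \sum_k|\langle S\pi(z)^*f,e_k\rangle_{L^2}|^2 = \sum_k|\langle f,\pi(z)S^*e_k\rangle_{L^2}|^2 = \sum_k|V_{S^*e_k}f(z)|^2.
\end{align*}
Integrating in $z$, interchanging the sum and the integral by Tonelli's theorem (all terms are nonnegative), and invoking Moyal's orthogonality relation $\|V_gf\|_{L^2(\rtd)}=\|g\|_{L^2}\|f\|_{L^2}$ for $f,g\in L^2(\rd)$ (see \cite{GrochenigBook2013}) gives
\begin{align*}
\|\mathfrak{V}_Sf\|_{L^2(\rtd;L^2)}^2 &= \sum_k\|V_{S^*e_k}f\|_{L^2(\rtd)}^2 = \|f\|_{L^2}^2\sum_k\|S^*e_k\|_{L^2}^2 \\
&= \|f\|_{L^2}^2\,\|S^*\|_{\hs}^2 = \|f\|_{L^2}^2\,\|S\|_{\hs}^2,
\end{align*}
where the last equality $\|S^*\|_{\hs}=\|S\|_{\hs}$ (again in $[0,+\infty]$) is a further application of Tonelli to $\sum_k\|S^*e_k\|_{L^2}^2=\sum_{j,k}|\langle Se_j,e_k\rangle|^2$.

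Granting the master identity, all implications are immediate. If $(4)$ holds, i.e.\ $S\in\hs\setminus\{0\}$, then $\|S\|_{\hs}\in(0,+\infty)$ and the identity reads $\|\mathfrak{V}_Sf\|_{L^2(\rtd;L^2)}=\|S\|_{\hs}\|f\|_{L^2}$, which is exactly $(1)$ with both equivalence constants equal to $\|S\|_{\hs}$, and which is precisely the displayed formula in the ``furthermore'' part. The implication $(1)\Rightarrow(2)$ is trivial. For $(2)\Rightarrow(3)$, apply the bound in $(2)$ to $f=g_0$ and use $\|g_0\|_{L^2}=1$ to get $\|\mathfrak{V}_Sg_0\|_{L^2(\rtd;L^2)}\lesssim\|g_0\|_{L^2}<+\infty$. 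For $(3)\Rightarrow(4)$, the master identity with $f=g_0$ gives $\|\mathfrak{V}_Sg_0\|_{L^2(\rtd;L^2)}^2=\|S\|_{\hs}^2$, so finiteness of the left-hand side forces $S\in\hs$ and simultaneously yields $\|S\|_{\hs}=\|\mathfrak{V}_Sg_0\|_{L^2(\rtd;L^2)}$. The isometry claim is the master identity in the normalized case $\|S\|_{\hs}=1$, combined with the evident linearity of $f\mapsto\mathfrak{V}_Sf$.

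Since the argument merely repackages Parseval in the target Hilbert space and Moyal's identity for the STFT, there is no deep obstacle; the care needed is purely technical. First, one must check that $z\mapsto S\pi(z)^*f$ is a legitimate $L^2(\rd)$-valued function on $\rtd$: this follows from the strong continuity of $z\mapsto\pi(z)^*$ on $L^2(\rd)$ and the boundedness of $S$, which make $z\mapsto S\pi(z)^*f$ strongly continuous, hence Bochner measurable with separable range. Second, the two interchanges of summation and integration must be justified, and both are instances of Tonelli's theorem precisely because every summand is nonnegative; this nonnegativity is also what lets the master identity stay meaningful with value $+\infty$ when $S\notin\hs$, so that it genuinely \emph{detects} membership in $\hs$ rather than presupposing it. Finally, it is worth recording that $\sum_k\|S^*e_k\|_{L^2}^2$ does not depend on the chosen orthonormal basis, so the identity is unambiguous.
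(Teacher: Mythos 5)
Your proposal is correct and follows essentially the same route as the paper: Parseval in the fiber to write $\|\mathfrak{V}_Sf(z)\|_{L^2}^2=\sum_n|V_{S^*e_n}f(z)|^2$, then integration and Moyal's identity to reduce everything to $\sum_n\|S^*e_n\|_{L^2}^2$. Packaging this as a single identity valid in $[0,+\infty]$ (plus the explicit Tonelli and measurability remarks) is only a tidier organization of the same two computations the paper performs for $f=g_0$ and for general $f$.
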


As we will see shortly, due to the advantage of Hilbert space, the $L^2$ case is not difficult to deal with.
However, this case is still enlightening.
In fact, in the study of the general modulation case as below,
one can verify $\calB_1^v\subset \hs$ by the logical relationship that the full version of \eqref{Eqi-1} 
is stronger than the special case with $p=q=2$ and $m=1$. See also Proposition \ref{pp-ebsb1} for a sharper conclusion.

Next, we explore the general case. This main theorem can be stated as follows. 
We use $C_v^m$ to denote the constant depending on $v$ and $m$, see Subsection 2.2 for more details.

\begin{theorem}\label{thm-M}
	Let $S\in \calL(L^2(\rd))\bs \{0\}$ and  
	$$\calB_1^v:=\{S\in \calL(L^2(\rd)): \|\mathfrak{V}_Sg_0\|_{\lv(\rtd; L^2)} <\infty\}.$$
	Let $v$ be a submultiplicative weight function on $\rdd$. 
	Denote by $\{e_n\}_{n=1}^{\fy}$ an orthonormal basis of $L^2(\rd)$.
	The following four statements are equivalent:
	\begin{enumerate}
		\item %$\Big( \int\big\| S(\pi^*(z)f)  \big\|_{L^2}^2 dz\Big)^{1/2}$
		$ \|\mathfrak{V}_Sf\|_{\lpqm(\rtd; L^2)} \sim_{S,m,v} \|f\|_{\mpqmd}$ for all $f\in \mpqmd$, $1\leq p,q \leq \infty$, $m\in \calM_v$;
		\item 
		$ \|\mathfrak{V}_Sf\|_{\lpqm(\rtd; L^2)}  \lesssim_{S,m,v} \|f\|_{\mpqmd}$ for all $f\in \mpqmd$, $1\leq p,q \leq \infty$, $m\in \calM_v$;
		\item 
		$ S\in \calB_1^v$;
		\item 
		$\|(V_{S^* e_n}f)_n\|_{\lpqm(\rdd, l^2)} \sim_{S,m,v} \|f\|_{\mpqmd}$ for all $f\in \mpqmd$, $1\leq p,q \leq \infty$, $m\in \calM_v$.
	\end{enumerate}
	Furthermore, if one of the above statements holds, for $f\in \mpqmd$ we have 
	\ben\label{thm-M-1}
	\frac{\|S\|_{\calHS}^2}{C_v^m\|\mfV_Sg_0\|_{L^1_v(\rdd, L^2)}}\|V_{g_0}f\|_{\lpqm}
	\leq
	\|\mathfrak{V}_Sf\|_{\lpqm(\rtd; L^2)}\leq C_v^m\|\mfV_Sg_0\|_{L^1_v(\rdd,L^2)}\|V_{g_0}f\|_{\lpqm}.
	\een
\end{theorem}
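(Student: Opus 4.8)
The plan is to prove the cycle $(1)\Rightarrow(2)\Rightarrow(3)\Rightarrow(1)$ together with $(1)\Leftrightarrow(4)$, and to extract the two-sided bound \eqref{thm-M-1} from the proof of $(3)\Rightarrow(1)$. The equivalence $(1)\Leftrightarrow(4)$ is essentially free: expanding $\|\mfV_Sf(z)\|_{L^2}^2=\|S\pi(z)^*f\|_{L^2}^2$ in the orthonormal basis $\{e_n\}$ gives, for every $z\in\rdd$,
\[
\|\mfV_Sf(z)\|_{L^2}^2=\sum_n|\langle S\pi(z)^*f,e_n\rangle|^2=\sum_n|\langle \pi(z)^*f,S^*e_n\rangle|^2=\sum_n|V_{S^*e_n}f(z)|^2 ,
\]
so $\|\mfV_Sf(z)\|_{L^2}=\|(V_{S^*e_n}f(z))_n\|_{\ell^2}$ pointwise, and the left-hand sides of $(1)$ and $(4)$ have identical $\lpqm$-norm. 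The implication $(1)\Rightarrow(2)$ is trivial, and $(2)\Rightarrow(3)$ follows by taking $p=q=1$, $m=v$ in $(2)$ and testing on $f=g_0\in\mv$: this gives $\|\mfV_Sg_0\|_{\lv(\rtd;L^2)}\lesssim_{S,v}\|g_0\|_{\mv}<\infty$, i.e. $S\in\calB_1^v$. Thus everything reduces to proving $(3)\Rightarrow(1)$, and it suffices to establish \eqref{thm-M-1}.

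Put $G(z):=\|\mfV_Sg_0(z)\|_{L^2}=\|S\pi(z)^*g_0\|_{L^2}$, so that $S\in\calB_1^v$ is exactly $G\in\lv(\rdd)$. For the right-hand (upper) inequality in \eqref{thm-M-1}, insert the STFT inversion formula $f=\int_{\rdd}V_{g_0}f(w)\,\pi(w)g_0\,dw$ (valid in the weak sense, as $\|g_0\|_{L^2}=1$), apply $S\pi(z)^*$, and slide the time-frequency shifts past each other via the commutation relations $\pi(z)^*\pi(w)=c(z,w)\pi(w-z)$ and $\pi(w-z)=c'(z,w)\pi(z-w)^*$ with $|c|=|c'|=1$; since then $\|S\pi(z)^*\pi(w)g_0\|_{L^2}=\|S\pi(z-w)^*g_0\|_{L^2}=G(z-w)$, the triangle inequality yields the pointwise control
\[
\|\mfV_Sf(z)\|_{L^2}\le\int_{\rdd}|V_{g_0}f(w)|\,G(z-w)\,dw=\bigl(|V_{g_0}f|*G\bigr)(z) .
\]
Applying the weighted Young-type convolution relation $\|F*G\|_{\lpqm}\le C_v^m\|F\|_{\lpqm}\|G\|_{\lv}$ (see \cite{GrochenigBook2013}) gives the right-hand inequality of \eqref{thm-M-1}. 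Taking $p=q=2$, $m\equiv1$ and $f=g_0$ in it, together with $\|V_{g_0}g_0\|_{L^2(\rdd)}=1$, shows $\|\mfV_Sg_0\|_{L^2(\rtd;L^2)}\le\|G\|_{L^1}<\infty$, whence $S\in\hs$ by Theorem \ref{thm-L2}; this membership is needed below.

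For the left-hand (lower) inequality, recover $V_{g_0}f$ from $\mfV_Sf$ through the operator-averaging identity $\int_{\rdd}\pi(z)\,S^*S\,\pi(z)^*\,dz=\|S\|_{\hs}^2\,\mathrm{Id}_{L^2}$, which is meaningful since $S^*S$ is trace class. Pairing it (applied to $f$) against $\pi(w)g_0$ gives
\[
\|S\|_{\hs}^2\,V_{g_0}f(w)=\int_{\rdd}\langle S\pi(z)^*f,\ S\pi(z)^*\pi(w)g_0\rangle_{L^2}\,dz ,
\]
and Cauchy--Schwarz in $L^2(\rd)$ under the integral, with $\|S\pi(z)^*\pi(w)g_0\|_{L^2}=G(z-w)$ as before, produces $\|S\|_{\hs}^2|V_{g_0}f(w)|\le(\|\mfV_Sf(\cdot)\|_{L^2}*\check G)(w)$ where $\check G(u)=G(-u)$. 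A second use of the weighted convolution relation, with the harmless symmetrization $\|\check G\|_{\lv}\asymp\|G\|_{\lv}$ absorbed into $C_v^m$, gives
\[
\|V_{g_0}f\|_{\lpqm}\le\frac{C_v^m}{\|S\|_{\hs}^2}\,\|\mfV_Sg_0\|_{L^1_v(\rdd,L^2)}\,\|\mfV_Sf\|_{\lpqm(\rtd;L^2)} ,
\]
which is the left-hand inequality of \eqref{thm-M-1}. Together with the upper bound this proves $(3)\Rightarrow(1)$, closes the cycle, and (taking $\|S\|_{\hs}=1$, $p=q=2$, $m\equiv1$, $f=g_0$) yields the remaining ``furthermore'' assertions.

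The phase bookkeeping, the precise form of $C_v^m$, and the symmetry of $v$ are routine. The genuine difficulties are two. First, one must make the vector- and operator-valued integrals rigorous, in particular the identity $\int_{\rdd}\pi(z)S^*S\pi(z)^*\,dz=\|S\|_{\hs}^2\,\mathrm{Id}_{L^2}$; I would verify it first for rank-one $S$ (where it collapses to Moyal's formula and the STFT inversion formula) and then extend by linearity and density in $\hs$, using the $L^1$-bound on $G$ to keep all integrals absolutely convergent. Second, since a general $f\in\mpqmd$ need not lie in $L^2$, the estimates should first be proved for $f$ in the norm-dense (resp. weak-$*$ dense) subspace $\mv$, where $\mfV_Sf$ is a bona fide $L^2$-valued function, and then transferred to all of $\mpqmd$ by the standard extension argument for modulation-space windows (cf. \cite[Ch.~11]{GrochenigBook2013} and Proposition \ref{pp-ebsb1}); the two-sided form of \eqref{thm-M-1} is exactly what makes this extension consistent.
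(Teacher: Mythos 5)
Your proposal is correct and follows the same overall skeleton as the paper: the cycle $(1)\Rightarrow(2)\Rightarrow(3)\Rightarrow(1)$ with $(2)\Rightarrow(3)$ obtained by testing at $p=q=1$, $m=v$, $f=g_0$; the equivalence $(1)\Leftrightarrow(4)$ via the pointwise identity $\|\mfV_Sf(z)\|_{L^2}=\|(V_{S^*e_n}f(z))_n\|_{\ell^2}$; and the lower bound via the reproducing identity $\mfV_S^*\mfV_S=\|S\|_{\hs}^2 I$ (your operator-averaging formula $\int\pi(z)S^*S\pi(z)^*dz=\|S\|_{\hs}^2\,\mathrm{Id}$ is exactly the paper's Proposition \ref{pp-rc}, and your Cauchy--Schwarz plus convolution step is an unwound form of Lemma \ref{lm-cov}). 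The one genuine difference is how you obtain the key pointwise domination $\|\mfV_Sf(z)\|_{L^2}\leq(|V_{g_0}f|\ast\|\mfV_Sg_0(\cdot)\|_{L^2})(z)$: you insert the weak STFT inversion formula and use the vector-valued triangle inequality together with $\|S\pi(z)^*\pi(w)g_0\|_{L^2}=\|\mfV_Sg_0(z-w)\|_{L^2}$, whereas the paper derives the same estimate coordinatewise from the scalar inequality $|V_{S^*e_n}f|\leq|V_{S^*e_n}g_0|\ast|V_{g_0}f|$ and then upgrades it to an $\ell^2$-valued bound by randomization and Khinchin's inequality (Proposition \ref{pp-cwid}). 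Your route is more direct and avoids the probabilistic step, at the cost of having to justify the interchange of $S\pi(z)^*$ with the weak-$*$ vector-valued integral for general $f\in\mvs$; the paper's route works pointwise for arbitrary $f\in\mvs$ from the outset, which is why it needs no density argument for the upper bound (density is only invoked, in both your proposal and the paper, for the reproducing identity). The technical caveats you flag (rigorous meaning of the operator-valued integral, extension from $\mv$ or $L^2$ to $\mvs$ by weak-$*$ approximation and dominated convergence using $\|\mfV_Sf(z)\|_{L^2}\lesssim v(z)\|f\|_{\mvs}$) are precisely the ones the paper resolves, so the gaps you leave are fillable exactly as you indicate.
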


\begin{remark}\label{rmk_B}
The reader may be confused about the definition of $\mathfrak{V}_Sf$ for $S\in \calB_1^v$ in Theorem \ref{thm-M} above.
In fact, by a direct calculation
\be
\begin{split}
	\|V_{g_0}S^*f(z)\|_{L^1_v(\rdd)}
	= &
	\|\lan f, S\pi(z)^*g_0\ran_{L^2}\|_{L^1_v(\rdd)}
	\\
	\leq &
	\|f\|_{L^2}\big\|\|S\pi(z)^*g_0\|_{L^2}\big\|_{L^1_v(\rdd)}
	\\
	= &
	\|f\|_{L^2}\|\mathfrak{V}_Sg_0\|_{\lv(\rtd; L^2)} \lesssim \|f\|_{L^2},
\end{split}
\ee
wee see that $S\in \calB_1^v$ implies that $S^*\in \calL(L^2,\mv)$.
Then the operator $S$ can be naturally extended to a bounded operator from $\mvs$ into $L^2$, also denoted by $S$. Therefore, the operator $\mathfrak{V}_Sf$ is well-defined for all $f\in \mvs$.
For simplicity, we will use $S^*\in \calL(L^2,\mv)$ to denote that $S\in \calL(L^2)$ with its Hilbert adjoint $S^*$ belonging to $\calL(L^2,\mv)$.
Hence, the window class $\calB_1^v$ can be re-represented by
\be
\calB_1^v:=\{S\in \calL(L^2): S^*\in \calL(L^2 ,\mv), \|\mathfrak{V}_Sg_0\|_{\lv(\rtd; L^2)} <\infty\}.
\ee
\end{remark}

\begin{remark}
Comparing with the corresponding result in \cite{Skrettingland2022JoFAaA} (see Theorem A),
the characterization in Theorem \ref{thm-M} is more natural and complete. In our approach,
both the case of bounded linear functional in \eqref{Eqi-0} and the case of bounded linear operator in \eqref{Eqi-1} can be treated in a uniform way, that is,
testing the upper bound inequality in \eqref{Eqi-0} or \eqref{Eqi-1} in the special case of $p=q=1$, $m=v$ and $f=g_0$.
Based on this method, our characterizations are derived directly from the equivalent norm conditions in \eqref{Eqi-1}, without any additional assumptions.
This approach also naturally leads to the corresponding characterization associated with positive Cohen's class distribution, giving an answer for the question posed in \cite[Subsection 7.1]{Skrettingland2022JoFAaA}. 

On the other hand, our style of defining window classes is more convenient for further generalization,
which will be demonstrated in Section \ref{sec-gw}.
\end{remark}

This paper is organized as follows. In Section 2, we collect some basic concepts and properties used in this paper.
Section 3 is devoted to the proofs of our main theorems. The corresponding problems associated with positive Cohen's class distributions are also discussed in Section 3. 
We give a generalization of operator classes in Section \ref{sec-gw}, including some basic properties of general operator classes and some re-exploration of the window class $\calB_1^v$.
Some applications to localization operators are showed at the end of this section.

Throughout this paper, we will adopt the following notations. We use $X\lesssim Y$ to denote the statement $X\leq CY$, with a positive constant $C$ that may depends on $p$, $q$, $d$, but it might be different from line to line. The notation $X\thicksim Y$ means the statement $X\lesssim Y\lesssim X$.
We also use $X\lesssim_{S,m,v} Y$ and $X\sim_{S,m,v} Y$ to denote the similar statements as above with the constant $C$ depending on
$S$, $m$ and $v$.
The inverse of a function is defined by $\tilde{g}(t)=g(-t)$.

\section{Preliminaries}

%%%%%%%%%%%%%%%%%%%%%%%%%%%%%%%%%%%%%%
%%             Definitions
%%%%%%%%%%%%%%%%%%%%%%%%%%%%%%%%%%%%
%%

\subsection{Time-frequency tools}
We consider the point $z=(x,\xi)$ in the time-frequency plane $\rdd$,
where $x, \xi \in \rd$ denote the time  and frequency variables, respectively.
For any fixed $x, \xi$, the translation operator $T_x$, modulation operator $M_{\xi}$ and time-frequency shift $\pi(z)$ are defined, respectively, by
\be
T_xf(t)=f(t-x),\ \ \ \ M_{\xi}f(t)=e^{2\pi it\cdot \xi}f(t),\ \ \ \  \pi(z)f(t)=M_{\xi}T_x  f(t)=e^{2\pi it\cdot\xi}f(t-x).
\ee

The short-time Fourier transform (STFT) of a function $f$ with respect to a window $g$ is defined by
\be
V_gf(x,\xi):=\lan f,\pi(z)g\ran_{L^2},\ \ \  f,g\in L^2(\rd).
\ee
Its extension to $\mvs\times \mv$ can be denoted by
\be
V_gf(x,\xi)=\langle f, \pi(z)g\rangle_{\mvs,\mv},
\ee
in which the STFT $V_gf$ is a bilinear map from $\mvs\times \mv$ into $L^{\fy}_{1/v}$.

A fundamental property we shall use is the following Moyal's identity.
\begin{lemma}\cite[Proposition 4.3.2]{GrochenigBook2013}\label{lm-Moyal}
	Let $f_1,f_2,\va_1,\va_2\in L^2(\rd)$, then $V_{\va_j}f_j\in L^2(\rd)$ for $j=1,2$, satisfy
	\be
	\int_{\rdd}V_{\va_1}f_1(z)\overline{V_{\va_2}f_2(z)}dz=\lan f_1,f_2\ran_{L^2}\overline{\lan \va_1,\va_2\ran}_{L^2}.
	\ee
\end{lemma}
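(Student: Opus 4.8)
The plan is to prove Moyal's identity by reducing the integral over $\rdd$ to a fiberwise application of Plancherel's theorem in the frequency variable $\xi$, followed by an integration in the time variable $x$. The starting point is the elementary rewriting of the STFT as a Fourier transform: fixing $x$ and setting $h_x^{(j)}(t):=f_j(t)\overline{\va_j(t-x)}$, one has
\be
V_{\va_j}f_j(x,\xi)=\int_{\rd}f_j(t)\overline{\va_j(t-x)}e^{-2\pi it\cdot\xi}\,dt=\calF h_x^{(j)}(\xi),
\ee
so that for each fixed $x$ the slice $\xi\mapsto V_{\va_j}f_j(x,\xi)$ is precisely the Fourier transform of $h_x^{(j)}$.

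First I would record the measure-theoretic facts that make the fiberwise argument legitimate. Since $f_j,\va_j\in L^2(\rd)$, Cauchy--Schwarz gives $h_x^{(j)}\in L^1(\rd)$ for every $x$, with $\|h_x^{(j)}\|_{L^1}\le\|f_j\|_{L^2}\|\va_j\|_{L^2}$. Moreover, Tonelli's theorem yields
\be
\int_{\rd}\|h_x^{(j)}\|_{L^2}^2\,dx=\int_{\rd}|f_j(t)|^2\Big(\int_{\rd}|\va_j(t-x)|^2\,dx\Big)dt=\|f_j\|_{L^2}^2\|\va_j\|_{L^2}^2<\fy,
\ee
so $h_x^{(j)}\in L^1\cap L^2$ for a.e.\ $x$. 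Specializing to $f_1=f_2$, $\va_1=\va_2$ and applying Plancherel in $\xi$ on each such fiber, then integrating in $x$, gives $\|V_{\va}f\|_{L^2(\rdd)}^2=\|f\|_{L^2}^2\|\va\|_{L^2}^2$; in particular $V_{\va_j}f_j\in L^2(\rdd)$, which is the asserted integrability (the statement's $L^2(\rd)$ being a typo for $L^2(\rdd)$) and guarantees that the left-hand side of the claimed identity is an absolutely convergent integral.

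With these preliminaries in place, the identity itself follows from the sesquilinear form of Plancherel applied fiberwise. For a.e.\ $x$ one has
\be
\int_{\rd}V_{\va_1}f_1(x,\xi)\overline{V_{\va_2}f_2(x,\xi)}\,d\xi=\int_{\rd}h_x^{(1)}(t)\overline{h_x^{(2)}(t)}\,dt=\int_{\rd}f_1(t)\overline{f_2(t)}\,\va_2(t-x)\overline{\va_1(t-x)}\,dt.
\ee
Integrating in $x$ and swapping the order of integration (Fubini, justified because the double integrand is dominated by $|f_1(t)||f_2(t)|\,|\va_1(t-x)||\va_2(t-x)|$, whose integral is at most $\|f_1\|_{L^2}\|f_2\|_{L^2}\|\va_1\|_{L^2}\|\va_2\|_{L^2}$ after two Cauchy--Schwarz estimates), I would carry the $x$-integration inside and substitute $u=t-x$ in the window factor, which decouples the integrals:
\be
\int_{\rdd}V_{\va_1}f_1(z)\overline{V_{\va_2}f_2(z)}\,dz=\int_{\rd}f_1(t)\overline{f_2(t)}\,dt\int_{\rd}\va_2(u)\overline{\va_1(u)}\,du=\lan f_1,f_2\ran_{L^2}\,\overline{\lan\va_1,\va_2\ran}_{L^2}.
\ee

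The only genuine obstacle is the measure-theoretic bookkeeping rather than any deep idea: one must verify that $h_x^{(j)}\in L^1\cap L^2$ on a common full-measure set of $x$ before invoking Plancherel on each fiber, and check the absolute integrability needed for the two applications of Fubini. All of these follow from the three displayed Cauchy--Schwarz/Tonelli estimates above, so no input beyond the $L^2$ Fourier theory is required.
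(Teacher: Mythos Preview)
Your proof is correct and is precisely the standard argument for Moyal's identity: rewrite the STFT as a partial Fourier transform of $h_x(t)=f(t)\overline{\va(t-x)}$, apply Plancherel fiberwise in $\xi$, then Fubini in $x$. The paper does not supply its own proof of this lemma; it is quoted from Gr\"ochenig's book \cite[Proposition 4.3.2]{GrochenigBook2013}, where the same Parseval/Fubini computation is carried out. So there is nothing to compare against here beyond noting that your argument coincides with the classical one in the cited reference.
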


We also need the Fourier transform on a product of STFTs.
\begin{lemma}\cite[Lemma 2.1]{Cordero2003}\label{lm-FTPSTFT}
	If $f_1,f_2,g_1,g_2\in L^2$, we have
	\be
	\scrF\big(V_{g_1}f_1\overline{V_{g_2}f_2}\big)(x,y)=\big(V_{f_2}f_1\overline{V_{g_2}g_1}\big)(-y,x).
	\ee
\end{lemma}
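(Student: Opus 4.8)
The plan is to prove the identity by a direct computation, first reducing to the Schwartz class so that all the integral manipulations are legitimate, and then passing to general $L^2$ inputs by density. I would begin by checking that both sides are well defined and depend continuously on the data. Since $f_j,g_j\in L^2(\rd)$, Moyal's identity (Lemma \ref{lm-Moyal}) gives $V_{g_1}f_1,V_{g_2}f_2\in L^2(\rdd)$ with $\|V_{g_j}f_j\|_{L^2}=\|f_j\|_{L^2}\|g_j\|_{L^2}$, so by Cauchy--Schwarz the product $V_{g_1}f_1\,\overline{V_{g_2}f_2}$ lies in $L^1(\rdd)$ with norm at most $\|f_1\|_{L^2}\|g_1\|_{L^2}\|f_2\|_{L^2}\|g_2\|_{L^2}$; the same bound holds for the right-hand side. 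Hence $\scrF$ sends each product into $C_0(\rdd)\subset L^\infty(\rdd)$, and both sides depend continuously (indeed boundedly and multilinearly, in the sup-norm) on the quadruple $(f_1,g_1,f_2,g_2)\in\big(L^2(\rd)\big)^4$. It therefore suffices to establish the formula for $f_1,f_2,g_1,g_2\in\calS(\rd)$ and then to extend it to all of $L^2(\rd)$ by density.

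For Schwartz inputs I would expand the two STFTs, naming the transform variable $z=(u,\eta)$ to avoid a clash with the output variables $(x,y)$:
\[
V_{g_1}f_1(u,\eta)=\int_{\rd}f_1(t)\overline{g_1(t-u)}e^{-2\pi i t\cdot\eta}\,dt,\qquad
\overline{V_{g_2}f_2(u,\eta)}=\int_{\rd}\overline{f_2(s)}\,g_2(s-u)e^{2\pi i s\cdot\eta}\,ds .
\]
Forming the product gives a double integral over the inner time variables $t,s$ carrying the phase $e^{-2\pi i(t-s)\cdot\eta}$. Because all integrands now decay rapidly, Fubini applies at every stage. I would then apply the $2d$-dimensional Fourier transform in $z=(u,\eta)$, with dual variables $(x,y)$, and carry out the $\eta$-integration first: the two $\eta$-dependent exponentials combine into $e^{-2\pi i\eta\cdot(t-s+y)}$, whose integral reproduces $\delta(t-s+y)$, forcing $s=t+y$. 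What remains is an integral in $(t,u)$; after the substitution $r=t-u$ the integrand separates into a factor depending on $(t,y,x)$ and one depending on $(r,y,x)$, and reading these back as STFTs one recognizes exactly $V_{f_2}f_1(-y,x)\,\overline{V_{g_2}g_1(-y,x)}$, the claimed right-hand side.

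The one genuinely delicate point is this $\eta$-integration: in the original $L^2$ generality the iterated integral is not absolutely convergent, so the ``delta function'' step is only formal. This is precisely the reason for reducing to $\calS(\rd)$ at the outset, where the $\eta$-integral is legitimate (either as a rapidly convergent oscillatory integral controlled by Fourier inversion, or equivalently via Parseval in the $\eta$ variable). Once the identity is verified on $\calS(\rd)$, I would invoke the continuity of both sides in the $L^2$ topology established in the first step to conclude that it holds for all $f_1,f_2,g_1,g_2\in L^2(\rd)$. Apart from this reduction, the proof is a bookkeeping of the two changes of variable and requires no additional input.
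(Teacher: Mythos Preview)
Your proposal is correct. The reduction to Schwartz inputs by the multilinear $L^2\to L^\infty$ continuity of both sides is the standard and appropriate way to legitimize the $\eta$-integration, and your computation (first collapsing $s=t+y$ via Fourier inversion in $\eta$, then separating variables by $r=t-u$) indeed reproduces $V_{f_2}f_1(-y,x)\,\overline{V_{g_2}g_1(-y,x)}$. One small clarification you could make explicit: the $\eta$-integral can be carried out rigorously even before the delta heuristic by recognizing $V_{g_j}f_j(u,\eta)=\scrF_{t\to\eta}\big(f_j(t)\overline{g_j(t-u)}\big)$ and applying Parseval in the $\eta$ variable; this yields directly $\int f_1(t)\overline{g_1(t-u)}\,\overline{f_2(t+y)}\,g_2(t+y-u)\,dt$ without any distributional argument.

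As for comparison with the paper: the paper supplies no proof at all for this lemma---it is simply quoted from \cite[Lemma~2.1]{Cordero2003}. Your direct verification is essentially the argument one finds in that reference (and in standard time-frequency texts), so there is no meaningful divergence in approach to discuss.
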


For a non-zero function $\g\in \mv$, we write $V_{\g}^*$ for the adjoint operator of $V_{\g}$, given by
\be
\lan V_{\g}^*F, f\ran=\lan F, V_{\g}f\ran.
\ee
We recall that $V_{\g}^*$ is bounded from $\lpqm$ into $\mpqm$ for $m\in \calM_v$. We also recall the inverse formula as follows.
\begin{lemma} \cite[Theorem 11.3.7]{GrochenigBook2013}
	Assume that $m\in \calM_v$ and let $g,\g\in \mv\bs\{0\}$. Then the following inversion formula is valid
\be
\lan \g, g\ran^{-1}V_{\g}^*V_g=I_{\mvs}.
\ee
\end{lemma}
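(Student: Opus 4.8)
The plan is to prove the operator identity by verifying it pointwise: I want to show that $V_{\g}^* V_g f = \lan \g, g\ran f$ for every $f \in \mvs$, which upon dividing by the nonzero scalar $\lan \g, g\ran$ yields $\lan \g, g\ran^{-1} V_{\g}^* V_g = I_{\mvs}$. Since $V_g$ is bounded from $\mvs$ into $L^{\fy}_{1/v}(\rdd)$ and $V_{\g}^*$ is bounded from $L^{\fy}_{1/v}(\rdd)$ back into $\mvs$, the composition is a well-defined bounded operator on $\mvs$, so it suffices to test the identity against the predual $\mv$.

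First I would settle the identity on the dense core $\mv$. For $f, h \in \mv \subset L^2(\rd)$ and $g, \g \in \mv \setminus \{0\}$, the definition of the adjoint gives $\lan V_{\g}^* V_g f, h\ran = \lan V_g f, V_{\g} h\ran = \int_{\rdd} V_g f(z)\, \overline{V_{\g} h(z)}\, dz$, and Moyal's identity (Lemma \ref{lm-Moyal}) evaluates this integral as $\lan f, h\ran\, \overline{\lan g, \g\ran} = \lan \g, g\ran \lan f, h\ran$. Hence $\lan V_{\g}^* V_g f, h\ran = \lan \g, g\ran \lan f, h\ran$ for all $f, h \in \mv$, which is the desired identity restricted to the core.

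The main step is the extension from $\mv$ to all of $\mvs$. I would fix $h \in \mv$ and rewrite, for arbitrary $f \in \mvs$,
\be
\lan V_{\g}^* V_g f, h\ran = \int_{\rdd} \lan f, \pi(z) g\ran\, \overline{V_{\g} h(z)}\, dz = \Big\langle f, \int_{\rdd} V_{\g} h(z)\, \pi(z) g \, dz\Big\rangle,
\ee
where the inner vector-valued integral $\Psi_h := \int_{\rdd} V_{\g} h(z)\, \pi(z) g \, dz$ converges as a Bochner integral in $\mv$: indeed $\|\pi(z) g\|_{\mv} \leq v(z)\|g\|_{\mv}$ by submultiplicativity of $v$, while $\int_{\rdd} |V_{\g} h(z)|\, v(z)\, dz = \|V_{\g} h\|_{\lv} < \infty$ since $h \in \mv$. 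Consequently $f \mapsto \lan V_{\g}^* V_g f, h\ran = \lan f, \Psi_h\ran$ is weak-$*$ continuous on $\mvs$, being the pairing with the fixed element $\Psi_h \in \mv$; and $f \mapsto \lan \g, g\ran \lan f, h\ran$ is likewise weak-$*$ continuous. These two weak-$*$ continuous functionals agree on $\mv$ by the previous step. Since $\mv$ is weak-$*$ dense in $\mvs = (\mv)^*$ (if $h \in \mv$ annihilates the canonical image of $\mv$ under the $L^2$ pairing, then $\lan h, h\ran = \|h\|_{L^2}^2 = 0$, forcing $h = 0$), the two functionals coincide on all of $\mvs$. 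Therefore $\lan V_{\g}^* V_g f, h\ran = \lan \g, g\ran \lan f, h\ran$ for every $f \in \mvs$ and every $h \in \mv$, which gives $V_{\g}^* V_g f = \lan \g, g\ran f$ in $\mvs$ and hence the claimed inversion formula.

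The principal obstacle is the justification underlying the displayed identity and the weak-$*$ argument: I must confirm that the Bochner integral defining $\Psi_h$ genuinely lands in $\mv$ (using the submultiplicative control on $\|\pi(z) g\|_{\mv}$) and that the duality pairing commutes with this integral, so that $\lan V_{\g}^* V_g f, h\ran$ is exactly the weak-$*$ continuous functional $\lan f, \Psi_h\ran$. Once these two points are secured, the extension from $\mv$ to $\mvs$ is automatic from weak-$*$ density, and no convergence issue survives because every pairing reduces to the absolutely convergent integral $\int_{\rdd} V_g f\, \overline{V_{\g} h}$ with $V_g f \in L^{\fy}_{1/v}$ and $V_{\g} h \in \lv$.
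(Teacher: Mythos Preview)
The paper does not supply its own proof of this lemma; it simply cites \cite[Theorem 11.3.7]{GrochenigBook2013}. Your argument is correct and follows the standard line found there: establish the identity on the dense core via Moyal's formula, then extend to all of $\mvs$ by a continuity/density argument. Your use of the Bochner integral $\Psi_h=\int_{\rdd}V_{\g}h(z)\,\pi(z)g\,dz\in\mv$ to exhibit $f\mapsto\lan V_{\g}^*V_gf,h\ran$ as a weak-$*$ continuous functional is a clean way to justify the extension step, and the integrability check $\int|V_{\g}h(z)|\,\|\pi(z)g\|_{\mv}\,dz\lesssim\|V_{\g}h\|_{L^1_v}\|g\|_{\mv}<\infty$ is exactly what is needed. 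One small point you implicitly assume (as does the paper's statement) is that $\lan\g,g\ran\neq0$; this hypothesis is explicit in the cited theorem and should be recorded here as well, since $g,\g\in\mv\setminus\{0\}$ alone does not guarantee it.
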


\subsection{Function spaces}
In order to introduce the function spaces, we first recall some  definitions of weights.
The weights we consider here are the moderate weights, which are suitable for the time-frequency estimates \cite{Groechenig2007POPDEaTA}.
More precisely, a weight function $m$ defined on $\rd$ is called $v$-moderate if there exists another weight function $v$ and a constant $C_v^m$ depending on $v$ and $m$,  such that
\be
m(z_1+z_2)\leq C_v^m v(z_1)m(z_2),\ \ \ \ z_1,z_2\in \rd,
\ee
where $v$ belongs to the class of submultiplicative weight, that is, $v$ satisfies
\be
v(z_1+z_2)\leq v(z_1)v(z_2),\ \ \ \ z_1,z_2\in \rd.
\ee
We use the notation $\calM_v(\rd)$ to denote the cone of all weight functions defined on $\rd$ which are $v$-moderate.
Without loss of generality, we also assume that a $v$-moderate weight is continuous and satisfies
$v(x,\xi)=v(-x,\xi)=v(x,-\xi)=v(-x,-\xi)$. 
We refer to \cite[Lemma 11.2.3]{Heil2008} for more details.

\begin{definition}[Weighted mixed-norm spaces]
	Let $1\leq p,q\leq \fy$, $m\in \calM_v(\rdd)$. Then the weighted mixed-norm space $L^{p,q}_m(\rdd)$
	consists of all Lebesgue measurable functions on $\rdd$ such that the norm
	\be
	\|F\|_{L^{p,q}_m(\rdd)}=\left(\int_{\rd}\left(\int_{\rd}|F(x,\xi) m(x,\xi)|^pdx\right)^{q/p}d\xi\right)^{1/q}
	\ee
	is finite, with the usual modification when $p=\fy$ or $q=\fy$. We write $L^p_m(\rdd)=L^{p,p}_m(\rdd)$ for short.
	If $m\equiv 1$, we write $L^{p,q}(\rdd)=L^{p,q}_m(\rdd)$.
\end{definition}

Now, we introduce the definition of (weighted) modulation space.
\begin{definition}\label{df-M}
	Let $1\leq p,q\leq \infty$, $m\in \calM_v(\rdd)$.
	The (weighted) modulation space $M^{p,q}_m(\rd)$ consists
	of all $f\in \mvs$ such that the norm
	\be
	\begin{split}
		\|f\|_{M^{p,q}_m(\rd)}&:=\|V_{g_0}f\|_{L^{p,q}_m(\rdd)}
		=\left(\int_{\rd}\left(\int_{\rd}|V_{g_0}f(x,\xi)m(x,\xi)|^{p} dx\right)^{{q}/{p}}d\xi\right)^{{1}/{q}}
	\end{split}
	\ee
	is finite, with the usual modification when $p=\fy$ or $q=\fy$.
\end{definition}

We recall a well-known convolution relation of modulation spaces.
\begin{lemma}\cite[Proposition 3.3]{Nicola2009}\label{lm-covm}
	For $p\in [1,\fy]$, we have
	\be
	M^{p,\fy}\ast M^1\subset M^p.
	\ee
\end{lemma}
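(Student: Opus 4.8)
The plan is to reduce the convolution relation to a pointwise convolution identity for short-time Fourier transforms, and then to invoke Young's inequality together with the standard nesting of modulation spaces in the frequency index. First I would record the elementary identity $V_gh(x,\xi)=e^{-2\pi i x\cdot\xi}(h\ast (M_\xi g)^{\vee})(x)$, where $(M_\xi g)^{\vee}(t)=\overline{M_\xi g(-t)}$, which rewrites each STFT as a modulated convolution against the window. Choosing two Gaussians $g_1,g_2\in M^1(\rd)\bs\{0\}$ and setting $g:=g_1\ast g_2$ (again a nonzero Gaussian, hence a legitimate window defining an equivalent modulation norm by \cite[Proposition 11.4.2]{GrochenigBook2013}), I would combine $M_\xi(g_1\ast g_2)=M_\xi g_1\ast M_\xi g_2$ with the fact that flip-conjugation turns convolution into convolution to obtain, after the phase factors cancel, the key pointwise identity
\be
V_g(f\ast h)(x,\xi)=\big(V_{g_1}f(\cdot,\xi)\ast V_{g_2}h(\cdot,\xi)\big)(x),
\ee
where the inner convolution is taken in the time variable for each fixed $\xi$. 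In particular $|V_g(f\ast h)(x,\xi)|\le \big(|V_{g_1}f(\cdot,\xi)|\ast |V_{g_2}h(\cdot,\xi)|\big)(x)$.

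With this identity in hand, I would estimate the $M^p=M^{p,p}$ norm directly. For each fixed $\xi$, Young's inequality in the time variable (in the form $\|a\ast b\|_{L^p}\le \|a\|_{L^p}\|b\|_{L^1}$) gives $\|V_g(f\ast h)(\cdot,\xi)\|_{L^p}\le \|V_{g_1}f(\cdot,\xi)\|_{L^p}\,\|V_{g_2}h(\cdot,\xi)\|_{L^1}$. Raising to the $p$-th power, integrating in $\xi$, and pulling out $\sup_\xi\|V_{g_1}f(\cdot,\xi)\|_{L^p}=\|V_{g_1}f\|_{L^{p,\fy}}\sim\|f\|_{M^{p,\fy}}$ yields
\be
\|f\ast h\|_{M^p}\lesssim \|f\|_{M^{p,\fy}}\,\Big(\int_{\rd}\|V_{g_2}h(\cdot,\xi)\|_{L^1}^p\,d\xi\Big)^{1/p}=\|f\|_{M^{p,\fy}}\,\|V_{g_2}h\|_{L^{1,p}}\sim \|f\|_{M^{p,\fy}}\,\|h\|_{M^{1,p}},
\ee
with the obvious modification (suprema in place of integrals) when $p=\fy$. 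The final step I would dispatch using the standard inclusion $M^{1,1}\subseteq M^{1,p}$ for $1\le p\le\fy$ (nesting in the second index), which gives $\|h\|_{M^{1,p}}\lesssim\|h\|_{M^{1,1}}=\|h\|_{M^1}$; combining the two displays proves $\|f\ast h\|_{M^p}\lesssim \|f\|_{M^{p,\fy}}\|h\|_{M^1}$, that is, $M^{p,\fy}\ast M^1\subseteq M^p$.

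The routine points I would dispatch quickly are the well-definedness of $f\ast h$ (since $h\in M^1$ lies in Feichtinger's algebra, in particular $h\in L^1\cap C_0$ with integrable STFT, and $f\in M^{p,\fy}\subset\mvs$, the convolution is meaningful and the STFT identity above may be justified first on a nice dense class and then propagated by the a priori bound) and the elementary phase-cancellation in the convolution identity. The step I expect to be the real crux is arriving at the correct Young exponents: the naive pairing would leave the $h$-factor measured in $L^{1,p}$ rather than in $M^1=M^{1,1}$, and the argument only closes because one splits the window as $g=g_1\ast g_2$ so that each factor carries its own window, and then exploits the favorable frequency-index nesting $M^{1,1}\subseteq M^{1,p}$. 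Deciding which factor receives the $L^p$ mass and which the $L^1$ mass in Young's inequality, and recognizing that the resulting mixed-norm embedding points in the correct direction, is the heart of the matter.
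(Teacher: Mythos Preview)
The paper does not supply a proof of this lemma; it is simply quoted from \cite{Nicola2009}, so there is nothing in the paper to compare against. Your argument is correct and is essentially the standard route to such convolution relations for modulation spaces: split the window as a convolution $g=g_1\ast g_2$, derive the pointwise identity $V_g(f\ast h)(\cdot,\xi)=V_{g_1}f(\cdot,\xi)\ast V_{g_2}h(\cdot,\xi)$, apply Young's inequality in the time variable, and then H\"older (here, the trivial $L^\infty\cdot L^p$ case) in the frequency variable. The final step $\|h\|_{M^{1,p}}\lesssim\|h\|_{M^{1}}$ via the nesting $M^{1,1}\subset M^{1,p}$ is the right way to close, and your remarks on well-definedness of $f\ast h$ and on the phase cancellation are adequate. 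If anything, the same computation with H\"older exponents $(q_1,q_2)=(\infty,1)$ in the frequency variable would yield the slightly sharper $M^{p,\infty}\ast M^1\subset M^{p,1}$, from which $M^{p,1}\subset M^{p,p}$ recovers the stated inclusion; but this refinement is not needed for the lemma as quoted.
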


Among the large classes of modulation spaces, a remarkable one is the Feichtinger algebra $\mv$ that serves as
the admissible window class in the sense of \eqref{Eqi-0}. 
The dual space $\mvs$ can be used as a substitute for the tempered distributions in the general case in which the weight function $v$ grows beyond the polynomial.

\begin{definition}[$L^2$-valued weighted mixed-norm spaces]
For $p,q\in[1,\infty]$ and a $v$-moderate weight $m$, the Banach space $\lpqm(\rtd; L^2)$ consists of all measurable functions $\Psi:\rtd\rightarrow L^2(\rd)$
such that
\[
\|\Psi\|_{\lpqm(\rtd; L^2)}
:=\|\|\Psi(z)\|_{L^2(\rd)}\|_{\lpqmtd}
=\bigg( \int_{\rd} \Big(\int_{\rd} \|\Psi(x,w)\|_{L^2(\rd)}^p m^p(x,w) dx\Big)^{q/p}dw \bigg)^{1/q}
\]
is finite, with the usual modification when $p=\fy$ or $q=\fy$.
\end{definition}

\subsection{Schatten class operator and nuclear operator}
Given a separable Hilbert space $H$ over $\cc$, for $p\in [1,\fy)$,
we use $\sp$ to denote the subspace of $\calL(H)$ consisting of linear compact operators $T$ with
the sequence of singular values belonging to $l^p$, that is, 
\be
\|T\|_{\sp}=\bigg(\sum_{j}s_j(T)^p\bigg)^{1/p}<\fy,
\ee
where $s_j(T)$ denotes the singular values of $T$.
For consistency, we define $\calS_{\fy}=\calL(H)$ to be the space of bounded linear operators on $H$. 

If $p=2$, $\calS_2$ is the space of Hilbert-Schmidt operators, also denoted by $\hs$.
The operator in $\hs$ is called the Hilbert-Schmidt operator, the quantity 
\be
\|T\|_{\hs}=\|T\|_{\calS_2}=\sup\Big\{\Big(\sum_n\|Te_n\|_H^2\Big)^{1/2}:  \{e_n\}\ \text{orthonormal}\Big\}
\ee
is called the Hilbert-Schmidt norm of $T$. 

If $p=1$, $\calS_1$ is the space of trace class operator.
For a trace class operator $T$, we define its trace by
\be
tr(T)=\sum_n \lan Te_n, e_n\ran_H,
\ee
where $\{e_n\}$ is an orthonormal basis of $H$.
In addition, the quantity $\|T\|_{\calS_1}$ is called the trace norm of $T$.

A basic connection between trace class operators and Hilbert-Schmidt operators is that if $S,T\in \calS_2$, then $ST\in \calS_1$.
Specifically, we have $tr(T^*T)=\|T\|_{\calS_2}^2$ for $T\in \calS_2$.

 Next, we recall the nuclear operator mentioned in \cite[Subsection 3.2]{Skrettingland2022JoFAaA}.
 An operator $T\in \calL(L^2,\mv)$ is said to be nuclear if it has an expansion of the form
 \be
 T=\sum_{n=1}^{\fy}\phi_n\otimes \xi_n,
 \ee
with $\sum_{n=1}^{\fy}\|\phi_n\|_{\mv}\|\xi_n\|_{L^2}<\fy$. By $\calN(L^2,\mv)$ we denote the collection of all nuclear operators.
Then $\calN(L^2,\mv)$ becomes a Banach space with the norm given by
\be
\|T\|_{\calN}:= \inf\left\{\sum_{n=1}^{\fy}\|\phi_n\|_{\mv}\|\xi_n\|_{L^2}:\ \  T=\sum_{n=1}^{\fy}\phi_n\otimes \xi_n\right\}.
\ee

\subsection{Khinchin’s inequality}

\begin{lemma}[Khinchin’s inequality, see \cite{Gut2013}]\label{L. Kinchine ineq.}
	Let $0<p<\infty$, $\{\omega_k\}_{k=1}^N$ be a sequence of independent random variables taking values $\pm 1$ with equal probability. Denote expectation (integral over the probability space) by $\mathbb{E}$.
	For any sequence of complex numbers $\{a_k\}_{k=1}^N$, we have
	\begin{equation}
		\mathbb{E}\bigg(\Big|\sum_{k=1}^Na_k\omega_k\Big|^p\bigg)
		\sim 
		\left(\sum_{k=1}^N|a_k|^2\right)^{\frac{p}{2}},
	\end{equation}
	where the implicit constants depend on $p$ only.
\end{lemma}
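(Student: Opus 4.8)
The plan is to prove the two-sided bound by comparing the $L^p$ and $L^2$ norms of the random sum $X:=\sum_{k=1}^N a_k\omega_k$ on the underlying probability space, after normalizing so that $\sum_{k=1}^N|a_k|^2=1$. The anchor of the whole argument is that the $L^2$ case is an exact identity: since $\mathbb{E}[\omega_k]=0$ and $\mathbb{E}[\omega_j\omega_k]=\delta_{jk}$, the Rademacher variables are orthonormal in $L^2$ of the probability space, so $\mathbb{E}|X|^2=\sum_{k=1}^N|a_k|^2$. The task is therefore reduced to showing $\|X\|_{L^p}\sim_p\|X\|_{L^2}$, and I would first dispose of the complex case by splitting $a_k=b_k+ic_k$ into real and imaginary parts; writing $U=\sum b_k\omega_k$ and $V=\sum c_k\omega_k$ one has $|X|^2=U^2+V^2$ with $\|U\|_{L^2}^2+\|V\|_{L^2}^2=\sum|a_k|^2$, so the complex estimate follows from the real one by elementary inequalities relating $(U^2+V^2)^{p/2}$ to $|U|^p+|V|^p$.

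For the upper bound when $p\geq 2$ I would use the standard subgaussian estimate for real coefficients. By independence,
\[
\mathbb{E}\big[e^{tX}\big]=\prod_{k=1}^N\cosh(t a_k)\leq\prod_{k=1}^N e^{t^2a_k^2/2}=e^{t^2/2},
\]
using the elementary pointwise bound $\cosh s\leq e^{s^2/2}$. A subgaussian moment generating function of this form yields tail bounds $\mathbb{P}(|X|>\lambda)\leq 2e^{-\lambda^2/2}$, and integrating $|X|^p$ against these tails gives $\mathbb{E}|X|^p\leq C_p\big(\sum|a_k|^2\big)^{p/2}$ with $C_p$ depending only on $p$. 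For $0<p\leq 2$ the upper bound is immediate from monotonicity of $L^r$ norms on a probability space (Jensen's inequality), which gives $\|X\|_{L^p}\leq\|X\|_{L^2}$.

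The lower bound is the only genuinely delicate point. For $p\geq 2$ it is again free from monotonicity, since $\|X\|_{L^2}\leq\|X\|_{L^p}$. The main obstacle is the regime $0<p<2$, where no direct moment computation is available; here I would exploit a log-convexity (interpolation) inequality for $L^r$ norms. Choosing the exponent $4$ and $\theta\in(0,1)$ with $\tfrac12=\tfrac{\theta}{p}+\tfrac{1-\theta}{4}$, H\"older gives $\|X\|_{L^2}\leq\|X\|_{L^p}^{\theta}\|X\|_{L^4}^{1-\theta}$. Feeding in the $p=4$ upper bound already established, namely $\|X\|_{L^4}\leq C\|X\|_{L^2}$, and cancelling a power of $\|X\|_{L^2}$, one obtains $\|X\|_{L^2}\leq C'\|X\|_{L^p}$, i.e. the desired lower bound $\mathbb{E}|X|^p\geq c_p\big(\sum|a_k|^2\big)^{p/2}$. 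Combining the four regimes and undoing the normalization yields the two-sided equivalence with constants depending only on $p$.
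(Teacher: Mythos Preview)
Your argument is correct and follows one of the standard proofs of Khinchin's inequality: the subgaussian moment-generating-function bound for the upper estimate when $p\ge 2$, monotonicity of $L^r$ norms on a probability space for the easy directions, and the log-convexity (H\"older) trick interpolating between $L^p$ and $L^4$ for the lower bound when $0<p<2$; the reduction from complex to real coefficients via $X=U+iV$ is also handled correctly. There is nothing to compare against in the paper itself: the lemma is merely quoted with a reference to Gut's textbook and is not proved there, so your write-up actually supplies what the paper omits.
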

%%%%%%%%%%%%%%%%%%%%%%%%%%%%%%%%%%%%%%%%%%%%%%%%%%%%%%%%%%%%%%%%%%%%%%%%%%%%%
%                       Equivalent norm of $M^{p,q}_m$                      %
%%%%%%%%%%%%%%%%%%%%%%%%%%%%%%%%%%%%%%%%%%%%%%%%%%%%%%%%%%%%%%%%%%%%%%%%%%%%%
%\begin{theorem}\label{thm-mpqm}
%  Let {\color{red}$S:\mvds\rightarrow L^2(\rd)$ and $S:L^2(\rd)\rightarrow \mvd$}, then the following four statements are equivalent:
%  \begin{enumerate}
%  \item %$\Big( \int\big\| S(\pi^*(z)f)  \big\|_{L^2}^2 dz\Big)^{1/2}$
%    $ \|\|S(\pi^*(z)f)\|_{L^2(\rd)}\|_{\lpqmtd} \sim \|f\|_{\mpqmd}$ for all $1\leq p,q \leq \infty$ and $m$ is $v$-moderate,
%  \item 
%   $ \|\|S(\pi^*(z)f)\|_{L^2(\rd)}\|_{\lpqmtd} \lesssim \|f\|_{\mpqmd}$ for all $1\leq p,q \leq \infty$ and $m$ is $v$-moderate,
%  \item 
%    $ \|\|S(\pi^*(z)g_0)\|_{L^2(\rd)}\|_{\lvsd} <\infty,$
%  \item $S\in \calB$.
%  \end{enumerate}
%%  Furthermore, if one of the above statements holds, we have 
%%    \[ \bigg( \int\big\| S(\pi^*(z)f)  \big\|_{L^2}^2 dz\bigg)^{1/2} =\|S\|_{\calS_2} \|f\|_{L2},\]
%%    and that $\|S\|_{\calS_2}=\bigg( \int\big\| S(\pi^*(z)g_0)  \big\|_{L^2}^2 dz\bigg)^{1/2}$.
%\end{theorem}

\section{characterizations of operator window}
\subsection{$L^2$ case}
In this subsection, we deal with the $L^2$ case.
This case reveals us that the suitable window class in \eqref{Eqi-1}  need to be included in the class of Hilbert-Schmidt operators.
\begin{proof}[Proof of Theorem \ref{thm-L2}]
It is obvious that $(1)\Rightarrow (2)\Rightarrow (3)$. Now, we deal with $(3)\Rightarrow (4)$.
Take $\{e_n\}_{n=1}^{\fy}$ to be an orthonormal basis of $L^2(\rd)$. 
By Parseval's identity we have 
\be
\|\mfV_Sg_0(z)\|_{L^2}^2=\sum_{n=1}^{\fy}\big|\lan \mfV_Sg_0(z), e_n\ran_{L^2}\big|^2.
\ee
Note that for $z=(x,\xi)$, we have
\be
\lan \mfV_Sg_0(z), e_n\ran_{L^2}=\lan S\pi(z)^*g_0, e_n\ran_{L^2}=\lan \pi(z)^*g_0, S^*e_n\ran_{L^2}=e^{-2\pi ix\cdot\xi}\lan \pi(-z)g_0, S^*e_n\ran_{L^2}.
\ee
From the above two estimates we have
\ben\label{eq-1}
\|\mfV_sg_0(z)\|_{L^2}^2=\sum_{n=1}^{\fy}|V_{g_0}S^*e_n(-z)|^2,
\een
and 
\be
\begin{split}
	\|\mathfrak{V}_Sg_0\|_{L^2(\rtd;L^2)}^2
	= &
	\int_{\rdd}\sum_{n=1}^{\fy}|V_{g_0}S^*e_n(-z)|^2dz\\
	= &
	\sum_{n=1}^{\fy}\int_{\rdd}|V_{g_0}S^*e_n(z)|^2dz=\sum_{n=1}^{\fy}\|S^*e_n\|_{L^2}^2,
\end{split}
\ee
where in the last equality we use Moyal's identity (Lemma \ref{lm-Moyal}). From this and the assumption (3), we conclude that
\be
\|S^*\|_{\calHS}=\big(\sum_{n=1}^{\fy}\|S^*e_n\|_{L^2}^2\big)^{1/2}=\|\mathfrak{V}_Sg_0\|_{L^2(\rtd;L^2)}<\fy,
\ee
which yields that $S^*\in \calHS$. Then, we obtain $S\in \calHS$ with $\|S\|_{\calHS}=\|S^*\|_{\calHS}$.

Finally, we consider $(4)\Rightarrow (1)$. Using Parseval's identity and the fact 
\be
\lan \mfV_Sf(z), e_n\ran_{L^2}=\lan f, \pi(z)S^* e_n\ran_{L^2}=V_{S^*e_n}f(z),
\ee 
we have
\ben\label{eq-3}
\|\mfV_Sf(z)\|_{L^2}^2=\sum_{n=1}^{\fy}\big|\lan \mfV_Sf(z), e_n\ran_{L^2}\big|^2=\sum_{n=1}^{\fy}|V_{S^*e_n}f(z)|^2.
\een
Then, we conclude (1) by
\be
\begin{split}
	\|\mathfrak{V}_Sf\|_{L^2(\rtd;L^2)}^2
	= &
	\int_{\rdd}\sum_{n=1}^{\fy}|V_{S^*e_n}f(z)|^2dz\\
	= &
	\sum_{n=1}^{\fy}\|S^*e_n\|_{L^2}^2\|f\|_{L^2}^2=\|S\|_{\calHS}^2\|f\|_{L^2}^2,
\end{split}
\ee
where in the last second equality we use Moyal's identity.
\end{proof}
It should not be difficult to see that Theorem \ref{thm-L2} and its proof are still valid 
when $L^2(\rd)$ is replaced by any separable Hilbert space $H$.

\subsection{$\mpqm$ case}

In order to deal with the general modulation space $\mpqm$,
 we first recall the following pointwise inequality of $STFT$.
One can find the following result from Lemma 11.3.3 in \cite{GrochenigBook2013}.

\begin{lemma}\label{lm-cSTFT}
	Let $\va,\psi\in \mv$, $f\in \mvs$. We have the following inequality
	\be
	|V_{\va}f|\leq \|\psi\|_{L^2}^{-2}|V_{\va}\psi|\ast |V_{\psi}f|.
	\ee
\end{lemma}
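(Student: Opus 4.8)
The plan is to obtain the pointwise bound from the change-of-window (reproducing) formula for $\mvs$ that was recorded above in the form $\lan\g,g\ran^{-1}V_\g^*V_g=I_{\mvs}$. Choosing $g=\g=\psi$ and unwinding the definition of $V_\psi^*$, every $f\in\mvs$ satisfies, in the weak-$\ast$ sense against test functions in $\mv$,
\be
f=\|\psi\|_{L^2}^{-2}\int_{\rdd}V_\psi f(z)\,\pi(z)\psi\,dz.
\ee
First I would pair both sides with $\pi(w)\va$ for $\va\in\mv$ and $w\in\rdd$, obtaining
\be
V_\va f(w)=\lan f,\pi(w)\va\ran=\|\psi\|_{L^2}^{-2}\int_{\rdd}V_\psi f(z)\,\lan\pi(z)\psi,\pi(w)\va\ran\,dz.
\ee
Then I would evaluate the kernel using the composition law for time-frequency shifts, $\pi(w)^*\pi(z)=c(z,w)\,\pi(z-w)$ with $|c(z,w)|=1$, together with the symmetry $|V_\va\psi(u)|=|V_\psi\va(-u)|$, which gives
\be
|\lan\pi(z)\psi,\pi(w)\va\ran|=|\lan\pi(z-w)\psi,\va\ran|=|V_\psi\va(z-w)|=|V_\va\psi(w-z)|.
\ee
Pulling the modulus inside the integral then yields
\be
|V_\va f(w)|\le\|\psi\|_{L^2}^{-2}\int_{\rdd}|V_\psi f(z)|\,|V_\va\psi(w-z)|\,dz=\|\psi\|_{L^2}^{-2}\big(|V_\va\psi|\ast|V_\psi f|\big)(w),
\ee
which is the assertion.

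The main obstacle is the analytic bookkeeping around the distributional identity: one must check that the vector-valued integral in the inversion formula converges in the weak-$\ast$ topology of $\mvs$, that interchanging the $\mvs$--$\mv$ pairing with the integration over $\rdd$ is legitimate, and that the resulting scalar integral is absolutely convergent so that moving the modulus inside is valid. Here the membership $\psi\in\mv$ is what makes everything work: it gives $V_\va\psi\in L^1_v(\rdd)$ and $V_\psi f\in L^\infty_{1/v}(\rdd)$, so by submultiplicativity of $v$ the convolution on the right-hand side is pointwise finite, and together with the weak-$\ast$ density of $\mv$ in $\mvs$ this justifies the formal computation above.

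An alternative route that sidesteps vector-valued integration is to establish the inequality first for $f\in L^2(\rd)$, where the kernel identity combined with Moyal's identity (Lemma \ref{lm-Moyal}) reduces the matter to an ordinary Fubini argument, and then to pass to general $f\in\mvs$ by the same density argument. Either way, the essential content is simply the ``change of window'' principle: $V_\va f$ is the convolution of $V_\psi f$ with the (integrable) reproducing kernel $|V_\va\psi|$, which is exactly the form the lemma asserts.
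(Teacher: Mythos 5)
Your argument is correct and is essentially the paper's own route: the paper simply cites Lemma 11.3.3 of Gr\"ochenig's book, whose proof is exactly this application of the inversion formula $f=\|\psi\|_{L^2}^{-2}V_\psi^*V_\psi f$, identification of the kernel $|\lan\pi(z)\psi,\pi(w)\va\ran|=|V_\va\psi(w-z)|$ via the covariance of time-frequency shifts, and the absolute convergence coming from $V_\va\psi\in L^1_v$ and $V_\psi f\in L^\fy_{1/v}$. Nothing further is needed.
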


Using a randomization technique, we establish the following vector-valued inequality.
\begin{proposition}\label{pp-cwid}
	Let $1\leq p,q\leq \fy$,
	$m\in \calM_v$ and $(\va_n)_{n=1}^\fy\subset \mv$.
	We have the following pointwise inequality
	\ben\label{pp-cwid-1}
	\Big( \sum_{n=1}^N \big| V_{\va_n}f \big|^2  \Big)^{1/2}
	\lesssim \Big( \sum_{n=1}^N \big| V_{\va_n}g_0 \big|^2  \Big)^{1/2}\ast |V_{g_0}f|.
	\een
	Moreover, if $\|(V_{\va_n}g_0)_n\|_{L^1_v(\rdd,l^2)}<\fy$, then the map $f \mapsto (V_{\va_n}f)_{n=1}^{\fy}$ is bounded from $\mpqm(\rd)$ to $\lpqm(\rdd,l^2)$ with
	\ben\label{pp-cwid-2}
	\|(V_{\va_n}f)_{n}\|_{\lpqm(\rdd,l^2)}\lesssim C_v^m\|(V_{\va_n}g_0)_n\|_{L^1_v(\rdd,l^2)}\|V_{g_0}f\|_{\lpqm}.
	\een
\end{proposition}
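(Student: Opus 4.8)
The plan is to derive the square-function estimate \eqref{pp-cwid-1} from the scalar pointwise bound in Lemma \ref{lm-cSTFT} by a randomization over signs, and then to obtain the norm inequality \eqref{pp-cwid-2} by feeding the resulting pointwise estimate into the weighted Young inequality $L^1_v\ast\lpqm\hookrightarrow\lpqm$. I would prove \eqref{pp-cwid-1} for a fixed finite $N$ and then pass to the infinite sequence by monotone convergence, which is what \eqref{pp-cwid-2} needs. So fix $N$, let $\{\omega_n\}_{n=1}^{N}$ be independent random signs as in Lemma \ref{L. Kinchine ineq.}, and set $\va_\omega:=\sum_{n=1}^{N}\omega_n\va_n\in\mv$. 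Since $\pi(z)$ is linear and the $\omega_n$ are real, $V_{\va_\omega}f=\sum_{n=1}^{N}\omega_n V_{\va_n}f$, and similarly with $f$ replaced by $g_0$.

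Next I would apply Lemma \ref{lm-cSTFT} with window $\va_\omega$ and auxiliary function $\psi=g_0$ (recall $\|g_0\|_{L^2}=1$), which gives, for every realization of $\omega$ and every $z\in\rdd$,
\ben\label{plan-rand}
\Big|\sum_{n=1}^{N}\omega_n V_{\va_n}f(z)\Big|\leq\int_{\rdd}\Big|\sum_{n=1}^{N}\omega_n V_{\va_n}g_0(z-w)\Big|\,|V_{g_0}f(w)|\,dw.
\een
Taking expectations in $\omega$ on both sides, and using Tonelli's theorem (the right-hand integrand is nonnegative) to move $\mathbb{E}$ inside the $w$-integral, I would then apply Khinchin's inequality (Lemma \ref{L. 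Kinchine ineq.}) with $p=1$ to the factors $\mathbb{E}\big|\sum_n\omega_n V_{\va_n}f(z)\big|$ and $\mathbb{E}\big|\sum_n\omega_n V_{\va_n}g_0(z-w)\big|$. This yields
\be
\Big(\sum_{n=1}^{N}|V_{\va_n}f(z)|^2\Big)^{1/2}\lesssim\int_{\rdd}\Big(\sum_{n=1}^{N}|V_{\va_n}g_0(z-w)|^2\Big)^{1/2}|V_{g_0}f(w)|\,dw,
\ee
which is \eqref{pp-cwid-1}, with an absolute implicit constant since the $p=1$ Khinchin constants are absolute; letting $N\to\fy$ via monotone convergence gives the same bound for the full sequence $(\va_n)_{n=1}^{\fy}$.

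For \eqref{pp-cwid-2} I would write $G:=\big(\sum_{n=1}^{\fy}|V_{\va_n}g_0|^2\big)^{1/2}$, so that the hypothesis says $\|G\|_{L^1_v(\rdd)}=\|(V_{\va_n}g_0)_n\|_{L^1_v(\rdd,l^2)}<\fy$, and note that $\|(V_{\va_n}f)_n\|_{\lpqm(\rdd,l^2)}=\big\|\big(\sum_n|V_{\va_n}f|^2\big)^{1/2}\big\|_{\lpqm}$. Applying the $\lpqm$-norm to the infinite-sum version of \eqref{pp-cwid-1} bounds the left-hand side by a constant times $\|G\ast|V_{g_0}f|\|_{\lpqm}$. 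Since $m$ is $v$-moderate, $m(z)\leq C_v^m\,v(z-w)\,m(w)$, so $\big(G\ast|V_{g_0}f|\big)(z)\,m(z)\leq C_v^m\big((Gv)\ast(|V_{g_0}f|\,m)\big)(z)$, and the classical Young inequality $L^1\ast\lpqm\hookrightarrow\lpqm$ (a consequence of Minkowski's integral inequality) then bounds the right-hand side by $C_v^m\|G\|_{L^1_v}\|V_{g_0}f\|_{\lpqm}$, which is \eqref{pp-cwid-2}.

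The crux is the symmetrization step \eqref{plan-rand}: passing to the single randomized window $\va_\omega$ converts the square function $\big(\sum_n|V_{\va_n}f|^2\big)^{1/2}$, via Khinchin, into a scalar quantity that Lemma \ref{lm-cSTFT} controls directly, and crucially keeps the $l^2$-sum \emph{inside} the convolution — which a naive termwise application of Lemma \ref{lm-cSTFT} would not do. The remaining steps — interchanging $\mathbb{E}$ and the integral, the limit $N\to\fy$, and the weighted Young inequality — are routine, so I do not expect a serious obstacle beyond tracking the constants, all of which surface either as $C_v^m$ or as the $p=1$ Khinchin constant.
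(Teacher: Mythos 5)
Your proposal is correct and follows essentially the same route as the paper: randomize the window with signs, apply the scalar pointwise bound of Lemma \ref{lm-cSTFT} to the single window $\sum_n\omega_n\va_n$ with $\psi=g_0$, take expectations and use Khinchin's inequality on both sides to recover the $l^2$ square functions, then conclude \eqref{pp-cwid-2} via the weighted Young inequality $\lpqm\ast L^1_v\subset\lpqm$ and a passage to the limit $N\to\fy$. The only difference is that you spell out routine details (Tonelli, monotone convergence, and deriving the weighted Young inequality from the $v$-moderate property) that the paper simply invokes.
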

\begin{proof}
Let $r_n(t)$ be a sequence of independent random variables taking values $\pm 1$ with equal probability.
Using Lemma \ref{lm-cSTFT}, we have
	\be
	|\sum\limits_{n=1}^N r_n(t)V_{\va_n}f|
	=
|V_{\sum\limits_{n=1}^N r_n(t)\va_n}f|
\leq
|V_{\sum\limits_{n=1}^Nr_n(t)\va_n}g_0| \ast |V_{g_0}f|.
\ee
Taking expectation on both sides and using the Khinchin inequality, we obtain
\be
\begin{split}
\Big( \sum_{n=1}^N \big| V_{\va_n}f \big|^2  \Big)^{1/2} 
\sim &
\mathbb{E}\Big(\Big|\sum\limits_{n=1}^N r_n(t)V_{\va_n}f\Big|\Big)
\\
\leq &
\mathbb{E}\Big(\Big|V_{\sum\limits_{n=1}^N r_n(t)\va_n}g_0\Big| \ast |V_{g_0}f|\Big)
\\
= &
\mathbb{E}\Big(\Big|\sum_{n=1}^Nr_n(t)V_{\va_n}g_0\Big|\Big) \ast |V_{g_0}f|
\sim\Big( \sum_{n=1}^N \big| V_{\va_n}g_0 \big|^2  \Big)^{1/2}\ast |V_{g_0}f|.
\end{split}
\ee
Applying the convolution inequality $\lpqm\ast L^1_v\subset \lpqm$ and letting $N\rightarrow\infty$, 
we obtain \eqref{pp-cwid-2}.

\end{proof}

\begin{proposition}\label{pp-cwi}
	Let $1\leq p,q\leq \fy$, $m\in \calM_v$ and $S\in \calB_1^v$. We have the following pointwise inequality for all $z\in \rdd$
	\ben\label{pp-cwi-1}
	\|\mfV_Sf(z)\|_{L^2}\lesssim  \left(\|\mfV_Sg_0(\cdot)\|_{L^2}\ast |V_{g_0}f(\cdot)|\right)(z).
	\een
	Moreover, the map $f \mapsto \mfV_Sf$ is bounded from $\mpqm(\rd)$ to $\lpqm(\rdd,L^2)$ with
	\ben\label{pp-cwi-2}
\|\mfV_Sf\|_{\lpqm(\rdd,L^2)}\lesssim C_v^m\|\mfV_Sg_0\|_{L^1_v(\rdd,L^2)}\|V_{g_0}f\|_{\lpqm(\rdd)}.
\een
\end{proposition}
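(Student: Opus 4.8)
The plan is to deduce Proposition \ref{pp-cwi} from the scalar/vector-valued Proposition \ref{pp-cwid} by expanding the $L^2$-norm of $\mfV_Sf(z)$ along an orthonormal basis. Fix an orthonormal basis $\{e_n\}_{n=1}^\fy$ of $L^2(\rd)$. As in the computation leading to \eqref{eq-3} in the proof of Theorem \ref{thm-L2}, one has $\lan \mfV_Sf(z),e_n\ran_{L^2}=\lan f,\pi(z)S^*e_n\ran_{L^2}=V_{S^*e_n}f(z)$, so by Parseval's identity
\be
\|\mfV_Sf(z)\|_{L^2}^2=\sum_{n=1}^{\fy}|V_{S^*e_n}f(z)|^2,
\ee
and likewise $\|\mfV_Sg_0(z)\|_{L^2}^2=\sum_{n=1}^\fy |V_{S^*e_n}g_0(z)|^2$. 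Here one uses Remark \ref{rmk_B}: since $S\in\calB_1^v$ we have $S^*\in\calL(L^2,\mv)$, hence each $S^*e_n\in\mv$, so the windows $\va_n:=S^*e_n$ are legitimate for Proposition \ref{pp-cwid}.

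Next I would apply the pointwise bound \eqref{pp-cwid-1} to the partial sums: for every $N$,
\be
\Big(\sum_{n=1}^N |V_{S^*e_n}f|^2\Big)^{1/2}\lesssim \Big(\sum_{n=1}^N |V_{S^*e_n}g_0|^2\Big)^{1/2}\ast|V_{g_0}f|
\le \|\mfV_Sg_0(\cdot)\|_{L^2}\ast|V_{g_0}f|,
\ee
where in the last step I bounded the finite sum under the square root by the full sum, which equals $\|\mfV_Sg_0(\cdot)\|_{L^2}^2$. Letting $N\to\fy$ by monotone convergence gives exactly \eqref{pp-cwi-1}. For the norm estimate \eqref{pp-cwi-2}, I would then take $\lpqm$-norms of both sides of \eqref{pp-cwi-1} and invoke the convolution relation $\lpqm\ast L^1_v\subset \lpqm$ (with constant $C_v^m$ coming from the $v$-moderateness of $m$), noting that $\big\|\|\mfV_Sg_0(\cdot)\|_{L^2}\big\|_{L^1_v}=\|\mfV_Sg_0\|_{L^1_v(\rdd,L^2)}$, which is finite precisely because $S\in\calB_1^v$. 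This yields \eqref{pp-cwi-2}.

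I expect no serious obstacle here: the proposition is essentially a corollary of Proposition \ref{pp-cwid} once the $L^2$-norm is unpacked via Parseval, and the randomization work has already been done in the previous proof. The only points requiring a little care are (i) justifying that $\mfV_Sf$ is a well-defined $L^2$-valued function on $\rdd$ for $f\in\mpqm$ — this is handled by the extension of $S$ to $\calL(\mvs,L^2)$ explained in Remark \ref{rmk_B}, together with the containment $\mpqm\subset\mvs$; (ii) the passage to the limit $N\to\fy$, which is clean because the integrands increase; and (iii) keeping track of which implicit constants depend on $p,q,d$ versus on $v,m$, so that the stated dependence $C_v^m$ in \eqref{pp-cwi-2} is correct. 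None of these is more than routine.
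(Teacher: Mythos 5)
Your proposal is correct and follows essentially the same route as the paper: expand $\|\mfV_Sf(z)\|_{L^2}$ via Parseval along an orthonormal basis, identify $\lan \mfV_Sf(z),e_n\ran=V_{S^*e_n}f(z)$ with $S^*e_n\in\mv$ (Remark \ref{rmk_B}), apply Proposition \ref{pp-cwid}, and finish with the convolution relation $\lpqm\ast L^1_v\subset\lpqm$. Your explicit handling of the partial sums and the limit $N\to\fy$ is just a slightly more careful rendering of the same argument.
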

\begin{proof}
Recall that $S\in \calB_1^v$ implies $S^*\in \calL(L^2, \mv)$, then $S$ can be extended to a bounded operator from $\mvs$ into $L^2$, also denoted by $S$.
Take $\{e_n\}_{n=1}^{\fy}$ to be an orthonormal basis of $L^2(\rd)$.
Note that for $f\in \mvs$, $z=(x,\xi)$,
\be
\lan \mfV_Sf(z), e_n\ran_{L^2}=\lan S\pi(z)^*f, e_n\ran_{L^2}=\lan \pi(z)^*f, S^*e_n\ran_{\mvs,\mv}=\lan f, \pi(z)S^*e_n\ran_{\mvs,\mv},
\ee
where $S^*e_n\in \mv$.
We have
\be
\|\mfV_Sf(z)\|_{L^2}^2
=\sum_{n=1}^{\fy}\big|\lan f, \pi(z)S^*e_n\ran_{\mvs,\mv}\big|^2
=\sum_{n=1}^{\fy}|V_{S^*e_n}f(z)|^2.
\ee
Using this and Proposition \ref{pp-cwid}, we conclude that
\be
\begin{split}
	\|\mfV_Sf(z)\|_{L^2}
	= &
	\|(V_{S^* e_n}f(z))_n\|_{l^2}
	\\
	\lesssim &
	\left\|(V_{S^*e_n}g_0(\cdot))_n\|_{l^2}\ast |V_{g_0}f(\cdot)|\right)(z)
    =
	\left(\|\mfV_Sg_0(\cdot)\|_{L^2}\ast |V_{g_0}f(\cdot)|\right)(z).
\end{split}
\ee
Finally, \eqref{pp-cwi-2} follows by the convolution inequality $\lpqm\ast L^1_v\subset \lpqm$.
\end{proof}

In order to obtain the lower bound estimate of $\|\mfV_Sf\|_{\lpqm(\rdd,L^2)}$, we
need a $\mvs$ reconstruction associated with $\mfV_S$.
We establish this reconstruction by the following classical method.
A similar process has also been carried out in \cite{Skrettingland2022JoFAaA}. We recall the operator $\mfV_S^*$ for $S\in \calB_1^v$ as follows.
\be
\lan \mfV_S^*F, \va\ran_{\mvs,\mv}:= \int_{\rdd}\lan F(z), \mfV_S\va\ran_{L^2} dz,\ \ \ F\in \lpqm(\rdd, L^2),\ \va\in \mv(\rd),
\ee
where the right term leads to a bounded linear functional on $\mv(\rd)$.
For the boundedness of $\mfV_S^*$ we recall the following lemma (see \cite[Lemma 5.3]{Skrettingland2022JoFAaA}) with slight modification.
\begin{lemma}\label{lm-cov}
Let $S\in \calB_1^v$, $m\in \calM_v$. For $1\leq p,q\leq \fy$, the map $\mfV_S^*$ is bounded from $\lpqm(\rdd,L^2)$ into $\mpqm(\rd)$ with the following inequality
\be
\|V_{g_0}\mfV_S^*F\|_{\lpqm(\rdd)}\leq  C_v^m\|F\|_{\lpqm(\rdd,L^2)}\|\mfV_Sg_0\|_{L^1_v(\rdd, L^2)}.
\ee
\end{lemma}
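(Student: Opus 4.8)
The plan is to unwind the definition of $\mfV_S^*$ by pairing it against time--frequency shifts of the Gaussian, to convert the resulting $L^2(\rd)$-valued integral over $\rdd$ into a scalar convolution on $\rdd$ via a covariance identity for $\mfV_S$, and then to apply the convolution inequality $\lpqm\ast L^1_v\subset\lpqm$ already used in Propositions \ref{pp-cwid} and \ref{pp-cwi}. For $\va\in\mv$ Proposition \ref{pp-cwi} gives $\mfV_S\va\in L^1_v(\rdd,L^2)$, so the defining integral
\be
\lan\mfV_S^*F,\va\ran_{\mvs,\mv}=\int_{\rdd}\lan F(z),\mfV_S\va(z)\ran_{L^2}\,dz
\ee
is a bounded linear functional of $\va$, i.e. $\mfV_S^*F\in\mvs$ is well defined; then $V_{g_0}\mfV_S^*F$ makes sense, and for $w\in\rdd$ I would compute
\be
V_{g_0}\mfV_S^*F(w)=\lan\mfV_S^*F,\pi(w)g_0\ran_{\mvs,\mv}=\int_{\rdd}\lan F(z),\mfV_S(\pi(w)g_0)(z)\ran_{L^2}\,dz .
\ee

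The heart of the argument is the covariance identity
\be
\mfV_S(\pi(w)g_0)(z)=S\pi(z)^*\pi(w)g_0=c(z,w)\,\mfV_Sg_0(z-w),
\ee
where $c(z,w)$ is a unimodular scalar; this follows from the composition laws $\pi(z)^*\pi(w)=(\mathrm{phase})\,\pi(w-z)$ and $\pi(w-z)=(\mathrm{phase})\,\pi(z-w)^*$ for time--frequency shifts. Since $|c(z,w)|=1$, we get $\|\mfV_S(\pi(w)g_0)(z)\|_{L^2}=\|\mfV_Sg_0(z-w)\|_{L^2}$, and the pointwise Cauchy--Schwarz inequality in $L^2(\rd)$ then yields
\be
|V_{g_0}\mfV_S^*F(w)|\le\int_{\rdd}\|F(z)\|_{L^2}\,\|\mfV_Sg_0(z-w)\|_{L^2}\,dz=(G\ast h)(w),
\ee
where $G(z)=\|F(z)\|_{L^2}$ and $h(u)=\|\mfV_Sg_0(-u)\|_{L^2}$.

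Finally I would apply $\lpqm\ast L^1_v\subset\lpqm$ with constant $C_v^m$, together with $\|G\|_{\lpqm}=\|F\|_{\lpqm(\rdd,L^2)}$ and, using the evenness $v(-u)=v(u)$ assumed throughout, $\|h\|_{L^1_v}=\|\mfV_Sg_0\|_{L^1_v(\rdd,L^2)}$, to obtain
\be
\|V_{g_0}\mfV_S^*F\|_{\lpqm(\rdd)}\le C_v^m\,\|F\|_{\lpqm(\rdd,L^2)}\,\|\mfV_Sg_0\|_{L^1_v(\rdd,L^2)},
\ee
which is exactly the claimed bound and, in view of Definition \ref{df-M}, says that $\mfV_S^*$ maps $\lpqm(\rdd,L^2)$ boundedly into $\mpqm(\rd)$. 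The computation is essentially routine once the covariance identity is in hand; the only points requiring attention are correctly tracking the reflection so as to recognize a genuine convolution, and checking that $\mfV_S^*F$ is well defined on all of $\lpqm(\rdd,L^2)$ rather than merely on a dense subclass --- both handled as in \cite[Lemma 5.3]{Skrettingland2022JoFAaA}.
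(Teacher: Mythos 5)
Your argument is correct and is essentially the proof the paper relies on: the paper does not prove Lemma \ref{lm-cov} in the text but invokes \cite[Lemma 5.3]{Skrettingland2022JoFAaA}, whose proof is exactly your route --- the covariance identity $S\pi(z)^*\pi(w)g_0=c(z,w)\,S\pi(z-w)^*g_0$ with $|c(z,w)|=1$, then Cauchy--Schwarz in $L^2(\rd)$ to get $|V_{g_0}\mfV_S^*F(w)|\le\big(\|F(\cdot)\|_{L^2}\ast\|\mfV_Sg_0(-\,\cdot)\|_{L^2}\big)(w)$, and the weighted Young inequality $\lpqm\ast L^1_v\subset\lpqm$ together with the evenness of $v$. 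One minor caution: $\mfV_S\va\in L^1_v(\rdd,L^2)$ alone does not guarantee convergence of $\int_{\rdd}\lan F(z),\mfV_S\va(z)\ran_{L^2}\,dz$ for arbitrary $F\in\lpqm(\rdd,L^2)$; for the well-definedness of $\mfV_S^*F$ apply Proposition \ref{pp-cwi} with the dual indices $(p',q')$ and the weight $1/m\in\calM_v$, which yields $\|\mfV_S\va\|_{L^{p',q'}_{1/m}(\rdd,L^2)}\lesssim\|\va\|_{\mv}$ and hence absolute convergence by H\"older, exactly as in the cited lemma.
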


Next, we turn to the reconstruction on $\mvs$. First, we recall a useful result in \cite[Lemma 4.1]{LuefSkrettingland2018JdMPeA}.
\begin{lemma}\label{scov}
Let $R, T\in \calS_1(L^2)$ be trace class operators. Then the function $z\mapsto tr(\pi(z)R\pi(z)^*T)$ is integrable with $\|tr(\pi(z)R\pi(z)^*T)\|_{L^1}\leq \|R\|_{\calS_1(L^2)}\|T\|_{\calS_1(L^2)}$. Furthermore,
\be
\int_{\rdd}tr(\pi(z)R\pi(z)^*T)dz=tr(R)tr(T).
\ee
\end{lemma}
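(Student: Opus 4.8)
The plan is to reduce the entire statement to the case of rank-one operators, where the trace becomes an explicit product of two short-time Fourier transforms, and then to invoke Moyal's identity (Lemma~\ref{lm-Moyal}). Since $R,T\in\calS_1(L^2)\subset\calS_2(L^2)$ and $\pi(z)$ is unitary, $\pi(z)R\pi(z)^*T$ is a product of two Hilbert--Schmidt operators, hence trace class for every $z$, so the function $z\mapsto tr(\pi(z)R\pi(z)^*T)$ is pointwise well defined; its measurability will fall out of the series representation below.

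First I would record two elementary facts: for $\phi,\psi\in L^2(\rd)$ one has $\pi(z)(\phi\otimes\psi)\pi(z)^*=(\pi(z)\phi)\otimes(\pi(z)\psi)$ because $\pi(z)$ is unitary, and $tr\big((\phi\otimes\psi)A\big)=\langle A\phi,\psi\rangle$ for every bounded $A$ (using $tr(AB)=tr(BA)$ together with $A(\phi\otimes\psi)=(A\phi)\otimes\psi$). Taking $A=\alpha\otimes\beta$ and combining the two gives the key identity
\[
tr\big(\pi(z)(\phi\otimes\psi)\pi(z)^*(\alpha\otimes\beta)\big)=\langle\pi(z)\phi,\beta\rangle\langle\alpha,\pi(z)\psi\rangle=V_{\psi}\alpha(z)\,\overline{V_{\phi}\beta(z)}.
\]
Then I would expand $R$ and $T$ in their singular value decompositions $R=\sum_j s_j(R)\,u_j\otimes v_j$ and $T=\sum_k s_k(T)\,p_k\otimes q_k$, with $\{u_j\},\{v_j\},\{p_k\},\{q_k\}$ orthonormal systems and $\sum_j s_j(R)=\|R\|_{\calS_1}$, $\sum_k s_k(T)=\|T\|_{\calS_1}$. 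These series converge in $\calS_1$-norm, hence in operator norm; conjugating by the unitary $\pi(z)$ and multiplying on the right by the bounded operator $T$ preserves $\calS_1$-convergence, and the trace is $\calS_1$-continuous, so termwise application of the key identity yields
\[
tr\big(\pi(z)R\pi(z)^*T\big)=\sum_{j}\sum_{k}s_j(R)\,s_k(T)\,V_{v_j}p_k(z)\,\overline{V_{u_j}q_k(z)}.
\]

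By Moyal's identity $\|V_{v_j}p_k\|_{L^2}=\|v_j\|_{L^2}\|p_k\|_{L^2}=1$, and similarly for the other factors, so Cauchy--Schwarz and Tonelli give
\[
\int_{\rdd}\sum_{j,k}s_j(R)\,s_k(T)\,\big|V_{v_j}p_k(z)\big|\,\big|V_{u_j}q_k(z)\big|\,dz\le\sum_{j,k}s_j(R)\,s_k(T)=\|R\|_{\calS_1}\|T\|_{\calS_1},
\]
which establishes the integrability, the claimed $L^1$-bound, and the measurability at once. With this absolute convergence in hand, Fubini's theorem lets me integrate the double series term by term, and each term is evaluated by Moyal's identity:
\[
\int_{\rdd}V_{v_j}p_k(z)\,\overline{V_{u_j}q_k(z)}\,dz=\langle p_k,q_k\rangle\,\overline{\langle v_j,u_j\rangle}=\langle u_j,v_j\rangle\langle p_k,q_k\rangle.
\]
Summing and using $tr(R)=\sum_j s_j(R)\langle u_j,v_j\rangle$ and $tr(T)=\sum_k s_k(T)\langle p_k,q_k\rangle$ (again from the $\calS_1$-convergent decompositions and continuity of the trace), the double sum factors into $tr(R)\,tr(T)$, which is the asserted identity.

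The rank-one computation and the two applications of Moyal's identity are routine; the step that needs genuine care — and which I expect to be the main obstacle — is the rigorous justification of the double-series representation of $tr(\pi(z)R\pi(z)^*T)$ and of the subsequent interchange of summation and integration, i.e. keeping straight the modes of convergence: $\calS_1$-norm convergence of the truncated singular value decompositions, continuity of the trace functional on $\calS_1$, and $L^1(\rdd)$-convergence of the double series via Tonelli. An alternative, slightly more pedestrian route would be to prove the bound and the identity first for finite-rank $R$ and $T$ by linearity from the rank-one case, and then pass to the limit by dominating the truncated traces in $\calS_1$-norm, at the cost of a short approximation argument.
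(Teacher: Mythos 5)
Your argument is correct. Note, however, that the paper itself does not prove this lemma at all: it is quoted verbatim from \cite[Lemma 4.1]{LuefSkrettingland2018JoFAaA} (Luef--Skrettingland), so there is no in-paper proof to compare against. Your proposal therefore supplies a self-contained proof using only tools already available in the paper: the rank-one identity $tr\big(\pi(z)(\phi\otimes\psi)\pi(z)^*(\alpha\otimes\beta)\big)=V_{\psi}\alpha(z)\,\overline{V_{\phi}\beta(z)}$, the singular value decompositions of $R$ and $T$ (with $\calS_1$-convergence and continuity of the trace justifying the termwise expansion), Cauchy--Schwarz plus Moyal's identity for the $L^1$ bound $\sum_{j,k}s_j(R)s_k(T)=\|R\|_{\calS_1}\|T\|_{\calS_1}$, and Moyal's identity again, after Fubini, to evaluate each term and resum to $tr(R)\,tr(T)$. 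All the convergence interchanges you flag as the delicate points are indeed legitimate as you describe them (absolute pointwise convergence of the double series, Tonelli for the bound, then Fubini/dominated convergence for the identity), and the measurability remark via continuous partial sums is fine. In the original reference the statement arises within the quantum harmonic analysis framework of operator convolutions (where it expresses that the convolution of two trace class operators is integrable with total integral $tr(R)tr(T)$); your SVD-plus-Moyal computation is a more elementary direct verification, which fits naturally into this paper since Lemma \ref{lm-Moyal} is already on record, at the cost of handling the bookkeeping of $\calS_1$-convergent series explicitly.
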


Now, we give the reconstruction on $\mvs$ by the method of \cite[Lemma 5.4]{Skrettingland2022JoFAaA} with slight modification.
\begin{proposition}\label{pp-rc}
	Let $S,T\in \calB_1^v$. We have $\mfV_T^*\mfV_S=tr(T^*S)I_{\mvs}$. Specifically, we have
	\be
	\mfV_S^*\mfV_S=tr(S^*S)I_{\mvs}=\|S\|_{\calS_2}^2I_{\mvs}.
	\ee
\end{proposition}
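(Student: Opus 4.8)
The plan is to reduce the identity $\mfV_T^*\mfV_S = tr(T^*S)I_{\mvs}$ to the integral formula of Lemma \ref{scov}, exploiting the fact that both $S$ and $T$, being in $\calB_1^v$, have adjoints in $\calL(L^2,\mv)$, so that the rank-one building blocks involved are genuinely trace class. First I would test the claimed operator identity against a dense set: it suffices to verify that for all $\va,\psi\in\mv$,
\be
\lan \mfV_T^*\mfV_S\va, \psi\ran_{\mvs,\mv} = tr(T^*S)\,\lan \va,\psi\ran_{L^2}.
\ee
By the definition of $\mfV_T^*$ recalled just before the statement, the left-hand side equals $\int_{\rdd}\lan \mfV_S\va(z), \mfV_T\psi(z)\ran_{L^2}\,dz = \int_{\rdd}\lan S\pi(z)^*\va, T\pi(z)^*\psi\ran_{L^2}\,dz$. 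Rewriting the integrand as $\lan T^*S\pi(z)^*\va, \pi(z)^*\psi\ran_{L^2}$, and then recognizing $\lan A\,\pi(z)^*\va, \pi(z)^*\psi\ran_{L^2} = \lan A\,\pi(z)^*\va \otimes \overline{\pi(z)^*\psi}, \cdot\ran$-type trace, I would rewrite it as $tr\big(\pi(z)(\psi\otimes\va)\pi(z)^*\,T^*S\big)$ using $\pi(z)^*{}^* = \pi(z)$ and the identity $\lan B u, w\ran = tr\big((w\otimes u)\,B\big)$ applied with $B = \pi(z)^* (T^*S) \pi(z)$ — or, more cleanly, move one $\pi(z)$ across so the integrand becomes $tr\big(\pi(z)\,(\psi\otimes\va)\,\pi(z)^*\,T^*S\big)$ after the cyclic property of the trace.

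Next I would apply Lemma \ref{scov} with $R = \psi\otimes\va$ (a rank-one, hence trace class, operator, with $tr(R) = \lan\va,\psi\ran_{L^2}$) and $T^*S$ in the role of the second trace class operator. The point to check carefully is precisely that $T^*S\in\calS_1(L^2)$: since $S,T\in\calB_1^v$ we have $S^*,T^*\in\calL(L^2,\mv)\subset\calL(L^2)$ with, by the Hilbert–Schmidt characterization in Theorem \ref{thm-L2} (whose hypotheses are implied by membership in $\calB_1^v$, as the full equivalence in Theorem \ref{thm-M} is stronger than the $p=q=2$, $m\equiv1$ case), $S,T\in\hs=\calS_2$; hence $T^*S\in\calS_1$ by the standard fact $\calS_2\cdot\calS_2\subset\calS_1$ recalled in Subsection 2.3. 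Lemma \ref{scov} then gives $\int_{\rdd} tr\big(\pi(z)R\pi(z)^*\,T^*S\big)\,dz = tr(R)\,tr(T^*S) = \lan\va,\psi\ran_{L^2}\,tr(T^*S)$, which is exactly the right-hand side, and in particular shows the integral defining $\mfV_T^*\mfV_S$ converges absolutely (the bound $\|R\|_{\calS_1}\|T^*S\|_{\calS_1}$ from Lemma \ref{scov} controls it). Taking $T=S$ specializes to $\mfV_S^*\mfV_S = tr(S^*S)I_{\mvs} = \|S\|_{\calS_2}^2 I_{\mvs}$, using $tr(S^*S)=\|S\|_{\calS_2}^2$ from Subsection 2.3.

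The main obstacle I anticipate is purely bookkeeping rather than conceptual: getting the time-frequency shifts and adjoints to land in exactly the configuration $tr(\pi(z)R\pi(z)^*\,T^*S)$ demanded by Lemma \ref{scov}, since $\mfV_S\va(z)=S\pi(z)^*\va$ carries a $\pi(z)^*$ (not $\pi(z)$), and one must use cyclicity of the trace together with $(\pi(z)^*)^*=\pi(z)$ without sign or phase errors; the phase factors in $\pi(z)^* = e^{-2\pi ix\cdot\xi}\pi(-z)$ cancel in the combination $\pi(z)^*(\cdot)\pi(z)$, but this should be noted explicitly. A secondary, more subtle point is the justification of the identity $\lan \mfV_S\va(z), \mfV_T\psi(z)\ran_{L^2} = tr\big((\cdot)\cdots\big)$ as a genuinely pointwise (in $z$) trace identity for rank-one operators, which is elementary, together with the measurability/integrability needed to invoke Lemma \ref{scov} — both handled by the $\calS_1$-membership established above and the $L^1$ bound in that lemma.
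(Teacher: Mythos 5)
Your computation in the case $f\in\mv$ (equivalently $f\in L^2$) is essentially the paper's: rewrite $\lan \mfV_T^*\mfV_S f,\va\ran$ as $\int_{\rdd} tr\big(\pi(z)T^*S\pi(z)^*(f\otimes\va)\big)\,dz$, observe that $T^*S\in\calS_1$ because $\calB_1^v\subset\calS_2$ (Theorem \ref{thm-L2} or Proposition \ref{pp-ebsb1}) and $\calS_2\cdot\calS_2\subset\calS_1$, and apply Lemma \ref{scov}. The genuine gap is in your opening reduction: you assert that it suffices to verify the identity for $\va,\psi\in\mv$, i.e.\ on a dense subset. But the proposition claims $\mfV_T^*\mfV_S=tr(T^*S)I_{\mvs}$ on all of $\mvs$, and $\mv$ (and $L^2$) are dense in $\mvs$ only in the weak$^*$ topology, not in norm; so the usual argument that two bounded operators agreeing on a dense set agree everywhere is not available. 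To pass from your dense class to a general $f\in\mvs$ you must establish some weak$^*$-type continuity of $f\mapsto\lan \mfV_T^*\mfV_S f,\va\ran$, which is neither automatic nor mentioned in your proposal. This is not a formality: in the proof of Theorem \ref{thm-M} the identity is applied to arbitrary $f\in\mpqmd$, in particular to $f\in M^{\fy}_{v^{-1}}(\rd)=\mvs$, so the $L^2$ case alone does not give the lower bound there.

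The paper devotes the second half of its proof exactly to this extension: for $f\in\mvs$ it picks $f_n\in L^2$ with $f_n\to f$ weak$^*$ and $\|f_n\|_{\mvs}\lesssim\|f\|_{\mvs}$, rewrites the pairing as $\int_{\rdd}\lan f_n,\pi(z)S^*T\pi(z)^*\va\ran_{\mvs,\mv}\,dz$, and passes to the limit by dominated convergence, the dominating function being $\|f\|_{\mvs}\,\|\mfV_T\va(z)\|_{L^2}\,v(z)\in L^1(\rdd)$, obtained from Proposition \ref{pp-cwi} (the case $p=q=\fy$, $m=1/v$) applied to $\|\mfV_S f_n(z)\|_{L^2}$. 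Your argument would be complete if you added such a limiting step, or an equivalent one (e.g.\ showing that $z\mapsto\pi(z)S^*T\pi(z)^*\va$ is Bochner integrable in $\mv$ with integral $tr(T^*S)\va$, so the pairing with any $f\in\mvs$ can be moved inside the integral). The issues you do flag (the phase in $\pi(z)^*=e^{-2\pi ix\cdot\xi}\pi(-z)$ cancelling in $\pi(z)^*(\cdot)\pi(z)$, and ordering the rank-one factor so that its trace is $\lan f,\va\ran$ rather than its conjugate) are indeed only bookkeeping.
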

\begin{proof}
	We need to verify that
	\ben\label{pp-rc-1}
	\lan \mfV_T^*\mfV_S f,\va\ran_{\mvs,\mv}=tr(T^*S)\lan f, \va\ran_{\mvs,\mv},
	\een
	for all $f\in \mvs$ and $\va\in \mv$.
	This identity is valid for $f\in L^2$, since
	\be
	\begin{split}
		\lan \mfV_T^*\mfV_S f,\va\ran_{\mvs,\mv}
		= &
		\int_{\rdd}\lan \mfV_Sf, \mfV_T\va \ran_{L^2}dz
		\\
		= &
		\int_{\rdd}\lan \pi(z)T^*S\pi(z)^*f, \va \ran_{L^2}dz
		\\
		= &
		\int_{\rdd}tr((\pi(z)T^*S\pi(z)^*f)\otimes \va)dz
		\\
		= &
		\int_{\rdd}tr(\pi(z)T^*S\pi(z)^*(f\otimes \va))dz,
	\end{split}
	\ee
	where by Lemma \ref{scov} the last term equals to 
	\be
	tr(T^*S)tr(f\otimes \va)=tr(T^*S)\lan f,\va\ran_{L^2}=tr(T^*S)\lan f,\va\ran_{\mvs,\mv}.
	\ee
	For $f\in \mvs$, recall that $S\in \calB_1^v$ implies $S^*\in \calL(L^2, \mv)$, and write
		\be
	\begin{split}
		\lan \mfV_T^*\mfV_S f,\va\ran_{\mvs,\mv}
			= &
		\int_{\rdd}\lan \mfV_Sf, \mfV_T\va \ran_{L^2}dz
		\\
		= &
		\int_{\rdd}\lan f, \pi(z)S^*T\pi(z)^*\va \ran_{\mvs,\mv}dz.
	\end{split}
	\ee
	Then, \eqref{pp-rc-1} is equivalent to 
	\ben\label{pp-rc-2}
	\int_{\rdd}\lan f, \pi(z)S^*T\pi(z)^*\va \ran_{\mvs,\mv}dz=tr(T^*S)\lan f, \va\ran_{\mvs,\mv},
	\een
	which has been verified for $f\in L^2(\rd)$. For $f\in \mvs$, there exists a sequence $\{f_n\}_{n=1}^{\fy}\subset L^2(\rd)$ that tends to $f$ in the weak* topology of $\mvs$, and satisfies $\|f_n\|_{\mvs}\lesssim \|f\|_{\mvs}$.
	Then, by \eqref{pp-rc-2} we obtain
	\ben\label{pp-rc-3}
	\int_{\rdd}\lan f_n, \pi(z)S^*T\pi(z)^*\va \ran_{\mvs,\mv}dz=tr(T^*S)\lan f_n, \va\ran_{\mvs,\mv},
	\een
	where the right term tends to $tr(T^*S)\lan f, \va\ran_{\mvs,\mv}$ as $n\rightarrow \fy$. The remaining issue is to deal with the left term by letting $n\rightarrow \fy$.
	For the sequence of functions $z\mapsto \lan f_n, \pi(z)S^*T\pi(z)^*\va \ran_{\mvs,\mv}$ that tends to $\lan f, \pi(z)S^*T\pi(z)^*\va \ran_{\mvs,\mv}$ as $n\rightarrow \fy$, we find the dominated function by
	\be
	\begin{split}
		|\lan f_n, \pi(z)S^*T\pi(z)^*\va \ran_{\mvs,\mv}|
		= &
		|\lan \mfV_Sf_n, \mfV_T\va \ran_{L^2}|
		\\
		\leq &
		\|\mfV_Sf_n\|_{L^2}\|\mfV_T\va\|_{L^2}
		\\
		= &
		v(z)^{-1}\|\mfV_Sf_n\|_{L^2}v(z)\|\mfV_T\va\|_{L^2}
		\\
		\leq &
		\big\|\|\mfV_Sf_n\|_{L^2}\big\|_{L^\fy_{1/v}}\|\mfV_T\va\|_{L^2}v(z)
		\\
		\lesssim & 
		\|f\|_{\mvs}\|\mfV_T\va\|_{L^2}v(z)\in L^1(\rdd),
	\end{split}
	\ee
	where in the last inequality we use Proposition \ref{pp-cwi} and $\|f_n\|_{\mvs}\lesssim \|f\|_{\mvs}$. Then the Lebesgue dominated convergence theorem yields that 
	the left term in \eqref{pp-rc-3} tends to $\int_{\rdd}\lan f, \pi(z)S^*T\pi(z)^*\va \ran_{\mvs,\mv}dz$. We have now completed this proof.
\end{proof}

\begin{proof}[Proof of Theorem \ref{thm-M}]
The proof of $(1)\Rightarrow (2)$ is obvious, and the relation $(2)\Rightarrow (3)$ follows by taking $p=q=1$, $f=g_0$ and $m=v$. 
Now, we consider the relation $(3)\Rightarrow (1)$.
The upper bound follows by Proposition \ref{pp-cwi}. For the lower bound, we use Proposition \ref{pp-rc} and Lemma \ref{lm-cov} to deduce that
\be
\begin{split}
	\|f\|_{\mpqm}
	= 
	\frac{1}{\|S\|_{\calHS}^2}\|\mfV_S^*\mfV_Sf\|_{\mpqm}
	\leq 
	\frac{C_v^m\|\mfV_Sg_0\|_{L^1_v(\rdd, L^2)}}{\|S\|_{\calHS}^2}\|\mfV_Sf\|_{\lpqm(\rdd,L^2)}.
\end{split}
\ee
Finally, $(1)\Longleftrightarrow (4)$ follows by \eqref{eq-3}.

\end{proof}

\subsection{Positive Cohen's class distributions}
In this subsection, we focus on the reinterpretation of Theorems \ref{thm-L2} and \ref{thm-M} by using Cohen's class distribution.
We refer to \cite[Section 7]{Skrettingland2022JoFAaA} for the corresponding research on this topic. In some sense, we give an answer to
the question posed in \cite[Subsection 7.1]{Skrettingland2022JoFAaA}. See also \cite[Example 7.3]{Skrettingland2022JoFAaA} for a discussion
on the positive assumption of $T$.

Given a Hilbert-Schmidt operator $T$ on $L^2$, the Cohen's class distribution $Q_T$ can be defined on $L^2$ by
\be
Q_Tf(z)=\lan T\pi(z)^*f, \pi(z)^*f\ran_{L^2},\ \ \ f\in L^2.
\ee
This definition was given in \cite{Luef2019}.
It can be regarded as a generalization of the classical Cohen's class distribution defined by $Q_a(f)=a\ast W(f)$ for $a,f\in L^2$. 
Here,  $W(f)$ denotes the Wigner distribution of $f$.
Using this generalized definition of Cohen's class distribution,
 we firstly give the following conclusion corresponding to Theorem \ref{thm-L2}.

\begin{theorem}\label{thm-L2c}
	Let $T\in \calL(L^2(\rd))\bs \{0\}$ be a positive operator. 
	Denote by $\sqrt{T}$ the positive square of $T$.
	The following five statements are equivalent:
	\begin{enumerate}
		\item 
		$ \|\sqrt{Q_Tf}\|_{L^2(\rdd)} \sim \|f\|_{L^2(\rd)}$ for all $f\in L^2(\rd)$;
		\item 
		$ \|\sqrt{Q_Tf}\|_{L^2(\rdd)} \lesssim \|f\|_{L^2(\rd)}$ for all $f\in L^2(\rd)$;
		\item 
		$ \|\sqrt{Q_Tg_0}\|_{L^2(\rdd)} < \fy$;
		\item $\sqrt{T}\in \calS_2$;
		\item $T$ is a trace class operator.
	\end{enumerate}
	Furthermore, if one of the above statements holds, we have 
	\[ \|\sqrt{Q_Tf}\|_{L^2(\rdd)} =\sqrt{tr(T)}\|f\|_{L^2},\ \ \ tr(T)=\|\sqrt{Q_Tg_0}\|_{L^2(\rdd)}^2.\] 
\end{theorem}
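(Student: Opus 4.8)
The plan is to reduce Theorem \ref{thm-L2c} directly to Theorem \ref{thm-L2} applied to the operator $S=\sqrt{T}$, after identifying $\sqrt{Q_Tf}$ with the $L^2$-valued quantity $\|\mfV_{\sqrt{T}}f(\cdot)\|_{L^2}$. The starting point is the elementary observation that, since $T$ is positive, $\sqrt{T}$ is self-adjoint and $T=(\sqrt{T})^*\sqrt{T}$; hence for every $f\in L^2(\rd)$ and $z\in\rdd$,
\[
Q_Tf(z)=\lan T\pi(z)^*f,\pi(z)^*f\ran_{L^2}=\lan\sqrt{T}\pi(z)^*f,\sqrt{T}\pi(z)^*f\ran_{L^2}=\|\sqrt{T}\pi(z)^*f\|_{L^2}^2=\|\mfV_{\sqrt{T}}f(z)\|_{L^2}^2 .
\]
In particular $Q_Tf(z)\ge 0$, so $\sqrt{Q_Tf}$ is a well-defined nonnegative measurable function and $\|\sqrt{Q_Tf}\|_{L^2(\rdd)}=\|\mfV_{\sqrt{T}}f\|_{L^2(\rtd;L^2)}$. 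Since $T\neq 0$ forces $\sqrt{T}\neq 0$, this identity transports statements (1), (2), (3) of Theorem \ref{thm-L2c} verbatim into statements (1), (2), (3) of Theorem \ref{thm-L2} with $S=\sqrt{T}$, and conversely.

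Next I would match up statement (4). By definition, statement (4) here is precisely statement (4) of Theorem \ref{thm-L2} for $S=\sqrt{T}$, namely $\sqrt{T}\in\hs=\calS_2$. Thus Theorem \ref{thm-L2} already yields the equivalence of (1)--(4) in Theorem \ref{thm-L2c}, together with the quantitative identities $\|\mfV_{\sqrt{T}}f\|_{L^2(\rtd;L^2)}=\|\sqrt{T}\|_{\hs}\|f\|_{L^2}$ and $\|\sqrt{T}\|_{\hs}=\|\mfV_{\sqrt{T}}g_0\|_{L^2(\rtd;L^2)}$.

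It then remains to prove (4)$\Longleftrightarrow$(5) and to compute $\|\sqrt{T}\|_{\hs}$. If $\sqrt{T}\in\calS_2$, then $\sqrt{T}$ is compact and $T=(\sqrt{T})^*\sqrt{T}$ is a product of two Hilbert--Schmidt operators, hence trace class (as recalled in Subsection 2.3), with $tr(T)=tr\big((\sqrt{T})^*\sqrt{T}\big)=\|\sqrt{T}\|_{\calS_2}^2$. Conversely, if $T\in\calS_1$ then $T$ is compact and positive; the spectral theorem gives $T=\sum_j\lambda_j\,e_j\otimes e_j$ with $\lambda_j\ge 0$ and $\sum_j\lambda_j=tr(T)<\fy$, whence $\sqrt{T}=\sum_j\sqrt{\lambda_j}\,e_j\otimes e_j$ and $\|\sqrt{T}\|_{\calS_2}^2=\sum_j\lambda_j=tr(T)<\fy$, so $\sqrt{T}\in\calS_2$. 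Combining $\|\sqrt{T}\|_{\hs}=\sqrt{tr(T)}$ with the two identities from Theorem \ref{thm-L2} displayed above gives $\|\sqrt{Q_Tf}\|_{L^2(\rdd)}=\sqrt{tr(T)}\,\|f\|_{L^2}$ and $tr(T)=\|\sqrt{Q_Tg_0}\|_{L^2(\rdd)}^2$, which finishes the argument.

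As for obstacles, there is essentially no hard analytic step: once the pointwise identity $Q_Tf=\|\mfV_{\sqrt{T}}f\|_{L^2}^2$ is in place, everything is a bookkeeping reduction to Theorem \ref{thm-L2}. The only point requiring a little care is the equivalence $\sqrt{T}\in\calS_2\iff T\in\calS_1$ together with $tr(T)=\|\sqrt{T}\|_{\calS_2}^2$; this is precisely where the positivity of $T$ is genuinely used (it guarantees that $\sqrt{T}$ exists, is self-adjoint, and is diagonalized simultaneously with $T$), and it explains why the positivity hypothesis cannot be dropped.
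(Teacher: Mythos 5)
Your proposal is correct and follows essentially the same route as the paper: both reduce the theorem to Theorem \ref{thm-L2} via the identity $\sqrt{Q_Tf}=\|\mathfrak{V}_{\sqrt{T}}f\|_{L^2}$ with $S=\sqrt{T}$, and then invoke $\|\sqrt{T}\|_{\calS_2}=\sqrt{tr(T)}$; you merely spell out the $(4)\Leftrightarrow(5)$ equivalence (via the spectral decomposition of the positive operator $T$) which the paper leaves as a cited fact.
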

\begin{proof}
	For $f\in L^2$, write
	\be
	Q_Tf=\lan T\pi(z)^*f,\pi(z)^*f\ran_{L^2}=\lan \sqrt{T}\pi(z)^*f,\sqrt{T}\pi(z)^*f\ran_{L^2}.
	\ee
	Denote $S=\sqrt{T}$, we have
	\be
	\sqrt{Q_Tf}=\|\mathfrak{V}_Sf\|_{L^2}\ \ \text{and}\ \ \  \|\mathfrak{V}_Sf\|_{L^2(\rtd;L^2)}=\|\sqrt{Q_Tf}\|_{L^2(\rdd)}.
	\ee
	Then, the desired equivalent relations follows by Theorem \ref{thm-L2} and the fact that $\|S\|_{\calS_2}=\sqrt{tr(T)}$.
\end{proof}

Next, we explore the corresponding result of Theorem \ref{thm-M}. 
We use $T^*\in \calL(\mvs,\mv)$ to denote that $T\in \calL(L^2)$ with its Hilbert adjoint $T^*$ belonging to $\calL(\mvs,\mv)$.
For $T^*\in \calL(\mvs,\mv)$, Cohen's class distribution associated with $T$ can be defined on $\mvs$ by
\ben\label{df-C}
Q_Tf(z)=\lan \pi(z)^*f, T^*\pi(z)^*f\ran_{\mvs,\mv},\ \ \ \ f\in \mvs.
\een
See \cite[Remark 6]{Skrettingland2022JoFAaA} for more details of the definition of Cohen's class distribution.
Now, we give the following conclusion corresponding to Theorem \ref{thm-M}.

\begin{theorem}\label{thm-Mc}
	Let $T\in \calL(L^2(\rd))\bs \{0\}$ be a positive operator.
	Let $v$ be a submultiplicative weight function on $\rdd$. 
	Denote by $\widetilde{\mpqmd}$ the $L^2(\rd)$ closure in $\mpqmd$.
	The following statements are equivalent:
	\begin{enumerate}
		\item %$\Big( \int\big\| S(\pi^*(z)f)  \big\|_{L^2}^2 dz\Big)^{1/2}$
		$ \|\sqrt{Q_Tf}\|_{\lpqm(\rtd)} \sim_{S,m,v} \|f\|_{\mpqmd}$ for all $f\in \widetilde{\mpqmd}$, $1\leq p,q \leq \infty$, $m\in \calM_v$;
		\item 
		$ \|\sqrt{Q_Tf}\|_{\lpqm(\rtd)}  \lesssim_{S,m,v} \|f\|_{\mpqmd}$ for all $f\in \widetilde{\mpqmd}$, $1\leq p,q \leq \infty$, $m\in \calM_v$;
		\item 
		$ \sqrt{T}\in \calB_1^v$.
	\end{enumerate}
	Furthermore, if one of the above statements holds, for $f\in \widetilde{\mpqmd}$  we have 
	\ben\label{thm-Mc-1}
	\frac{tr(T)}{C_v^m\|\sqrt{Q_Tg_0}\|_{L^1_v(\rdd)}}\|V_{g_0}f\|_{\lpqm}
	\leq
	\|\sqrt{Q_Tf}\|_{\lpqm(\rtd)}
	\leq C_v^m\|\sqrt{Q_Tg_0}\|_{\lv(\rtd)} \|V_{g_0}f\|_{\lpqm}.
	\een
\end{theorem}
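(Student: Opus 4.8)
The plan is to reduce Theorem \ref{thm-Mc} to Theorem \ref{thm-M} via the same substitution $S=\sqrt{T}$ that was used in the proof of Theorem \ref{thm-L2c}, and then track carefully where the $L^2$-closure $\widetilde{\mpqmd}$ enters. First I would record the pointwise identity: for $f\in L^2(\rd)$ and $z\in\rdd$,
\[
Q_Tf(z)=\lan T\pi(z)^*f,\pi(z)^*f\ran_{L^2}=\lan \sqrt{T}\pi(z)^*f,\sqrt{T}\pi(z)^*f\ran_{L^2}=\|\mfV_Sf(z)\|_{L^2}^2,
\]
so that $\sqrt{Q_Tf}=\|\mfV_Sf(\cdot)\|_{L^2}$ and hence $\|\sqrt{Q_Tf}\|_{\lpqm(\rtd)}=\|\mfV_Sf\|_{\lpqm(\rtd;L^2)}$ for all $f\in L^2(\rd)$. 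This matches the scalar quantities in Theorem \ref{thm-Mc} with the $L^2$-valued quantities in Theorem \ref{thm-M} exactly on the dense subspace $L^2(\rd)\subset\mpqmd$. Since the statements (1), (2) of Theorem \ref{thm-Mc} are quantified only over $f\in\widetilde{\mpqmd}$, which is by definition the $\mpqmd$-closure of $L^2(\rd)$, the equivalence with the corresponding statements of Theorem \ref{thm-M} (restricted to $L^2$ and then extended by density, using that both sides are continuous in the $\mpqmd$-norm) is immediate.

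The implications then go as follows. For $(3)\Rightarrow(1)$: assuming $\sqrt{T}=S\in\calB_1^v$, Theorem \ref{thm-M}(3)$\Rightarrow$(1) gives $\|\mfV_Sf\|_{\lpqm(\rtd;L^2)}\sim_{S,m,v}\|f\|_{\mpqmd}$ for all $f\in\mpqmd$, and in particular for all $f\in L^2(\rd)$; by the identity above this is precisely $\|\sqrt{Q_Tf}\|_{\lpqm(\rtd)}\sim_{S,m,v}\|f\|_{\mpqmd}$ for $f\in L^2(\rd)$, and both sides extend continuously to $\widetilde{\mpqmd}$. The quantitative bounds \eqref{thm-Mc-1} follow by substituting $S=\sqrt{T}$ into \eqref{thm-M-1} and using $\|S\|_{\calHS}^2=\|\sqrt{T}\|_{\calS_2}^2=\operatorname{tr}(\sqrt{T}^*\sqrt{T})=\operatorname{tr}(T)$ together with $\|\mfV_Sg_0\|_{L^1_v(\rdd,L^2)}=\big\|\,\|\mfV_Sg_0(\cdot)\|_{L^2}\big\|_{L^1_v(\rdd)}=\|\sqrt{Q_Tg_0}\|_{L^1_v(\rdd)}$. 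The trivial implication $(1)\Rightarrow(2)$ is just dropping the lower bound. For $(2)\Rightarrow(3)$: specializing (2) to $p=q=1$, $m=v$, $f=g_0$ (which lies in $L^2(\rd)\subset\widetilde{\mpqmd}$) gives $\|\sqrt{Q_Tg_0}\|_{\lv(\rtd)}=\|\mfV_Sg_0\|_{\lv(\rtd;L^2)}<\fy$, which is exactly the defining condition $S\in\calB_1^v$; equivalently one invokes Theorem \ref{thm-M}(2)$\Rightarrow$(3) directly, since its hypothesis only needs the estimate at this one choice of parameters and test function.

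The main obstacle I anticipate is purely a bookkeeping one about domains: $Q_Tf$ in Theorem \ref{thm-Mc} is a priori defined by \eqref{df-C} only when $T^*\in\calL(\mvs,\mv)$, whereas statement (3) only asserts $\sqrt{T}\in\calB_1^v$, i.e. $(\sqrt{T})^*\in\calL(L^2,\mv)$. So before the equivalences are even meaningful one must check that $\sqrt{T}\in\calB_1^v$ forces $T^*=T\in\calL(\mvs,\mv)$ — this is where positivity of $T$ is essential: writing $T=S^*S$ with $S=\sqrt{T}$, and noting $S\in\calB_1^v$ gives $S^*\in\calL(L^2,\mv)$ hence $S\in\calL(\mvs,L^2)$ (as in Remark \ref{rmk_B}), we get $T=S^*S\in\calL(\mvs,\mv)$, so $Q_Tf$ extends to $f\in\mvs$ and the identity $Q_Tf(z)=\|\mfV_Sf(z)\|_{L^2}^2$ persists on $\mvs$ by the same duality pairings. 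Conversely, on the side of statements (1)–(2), one should note that a priori $Q_Tf$ need only be defined for $f$ in the appropriate extension domain, which is why the quantification is over $\widetilde{\mpqmd}$ rather than all of $\mpqmd$; keeping this restriction consistent throughout, and observing that it causes no loss because $g_0\in L^2$ is already in the test class, is the only subtlety. With these domain matters settled, the proof is a direct transcription of Theorem \ref{thm-M} through the dictionary $S=\sqrt{T}$, exactly as in the proof of Theorem \ref{thm-L2c}.
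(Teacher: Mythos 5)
Your reduction is correct and follows the paper's own dictionary $S=\sqrt{T}$, the same specialization $p=q=1$, $m=v$, $f=g_0$ for $(2)\Rightarrow(3)$, and the same identifications $\|S\|_{\calHS}^2=tr(T)$, $\|\mfV_Sg_0\|_{L^1_v(\rdd,L^2)}=\|\sqrt{Q_Tg_0}\|_{L^1_v(\rdd)}$ for \eqref{thm-Mc-1}. Where you genuinely diverge is in how the passage from $f\in L^2$ to $f\in\widetilde{\mpqm}$ is handled. The paper keeps the identity $\sqrt{Q_Tf}=\|\mfV_{\sqrt T}f\|_{L^2}$ only on $L^2$ and then runs an explicit limiting argument: it shows $Q_Tf_j(z)\to Q_Tf(z)$ pointwise (using $T^*\in\calL(\mvs,\mv)$), gets the upper bound on $\widetilde{\mpqm}$ by Fatou's lemma, and obtains the two-sided bound from the norm-convergence claim based on the pointwise inequality $|\sqrt{Q_Tf_j}-\sqrt{Q_Tf}|\le\sqrt{Q_T(f_j-f)}$ combined with statement (2). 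You instead observe that, since $S=\sqrt T\in\calB_1^v$ gives $S^*\in\calL(L^2,\mv)$ and hence the canonical extension $S\in\calL(\mvs,L^2)$ with $\lan g,S^*h\ran_{\mvs,\mv}=\lan Sg,h\ran_{L^2}$, the identity $Q_Tf(z)=\|\mfV_Sf(z)\|_{L^2}^2$ already persists for every $f\in\mvs$; then Theorem \ref{thm-M} applies verbatim and no approximation, Fatou, or convergence claim is needed — indeed your argument would give (1) and (2) for all $f\in\mpqmd$, not only on the $L^2$-closure, which is slightly stronger than what the paper states (and consistent with its remark that $\widetilde{\mpqm}=\mpqm$ for $p,q<\fy$). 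This is a cleaner route, and your domain discussion (positivity giving $T=S^*S\in\calL(\mvs,\mv)$ so that $Q_T$ is even defined on $\mvs$) is exactly the point that makes it legitimate. The only soft spot is your alternative phrasing that the equivalence extends "by density, using that both sides are continuous in the $\mpqmd$-norm": continuity of $f\mapsto\|\sqrt{Q_Tf}\|_{\lpqm(\rtd)}$ is not free — it requires precisely the pointwise bound $|\sqrt{Q_Tf}-\sqrt{Q_Tg}|\le\sqrt{Q_T(f-g)}$ together with the already-proved upper bound, which is the content of the paper's claim — so if you rely on the density route rather than the identity-on-$\mvs$ route, you should spell that step out rather than call it immediate.
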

\begin{proof}
	The relation  $(1)\Rightarrow (2)$ is obvious, and the relation $(2)\Rightarrow (3)$ follows by taking $p=q=1$, $f=g_0$ and $m=v$.  
	Then we have
	\be
	\begin{split}
	\|\mathfrak{V}_{\sqrt{T}}g_0\|_{\lv(\rtd; L^2)}
	= &
	\|\sqrt{\lan \sqrt{T}\pi(z)^*g_0,  \sqrt{T}\pi(z)^*g_0 \ran}\|_{L^1_v(\rdd)}
	\\
	= &
	\|\sqrt{\lan T\pi(z)^*g_0,  \pi(z)^*g_0 \ran}\|_{L^1_v(\rdd)}
	=\|\sqrt{Q_Tg_0}\|_{L^1_v(\rdd)}
	\lesssim 
	\|g_0\|_{\mv(\rd)}<\fy.
	\end{split}
   \ee
	
	Next, we only need to verify the inverse direction 
	$(3)\Rightarrow (2)\Rightarrow (1)$.
	As in the proof of Theorem \ref{thm-L2c}, for $f\in L^2$ and $S=\sqrt{T}$,  we have
	\ben\label{3}
	\sqrt{Q_Tf}=\|\mathfrak{V}_Sf\|_{L^2},\ \ \ 
	\|\mathfrak{V}_Sf\|_{\lpqm(\rtd; L^2)}=\|\sqrt{Q_Tf}\|_{\lpqm(\rtd)}.
	\een
	Using this with Theorem \ref{thm-M} and the fact $S=\sqrt{T}\in \calB_1^v$,
    we conclude that the statement (2) is valid for $f\in L^2$.
	For $f\in \widetilde{\mpqm}$, there exists a sequence of $L^2$ functions denoted by $\{f_j\}_{j=1}^{\fy}$ such that $f_j\rightarrow f$ in the topology of $\mpqm$.
	Since $f_j\in L^2$, we have
	\be
	\|\sqrt{Q_Tf_j}\|_{\lpqm(\rtd)}  \lesssim_{S,m,v} \|f_j\|_{\mpqmd}.
	\ee
	Recalling $\sqrt{T}\in \calB_1^v$, we conclude $\sqrt{T}\in \calL(L^2,\mv)$ by Remark \ref{rmk_B}.
By this and that $\sqrt{T}$ is self-adjoint,  the operator $S=\sqrt{T}\in \calL(L^2)$ can be extended by duality to be a bounded operator from $\mvs$ into $L^2$, also denoted by $\sqrt{T}$. Using this extension of $\sqrt{T}$, the operator $T=\sqrt{T}\sqrt{T}$ can be naturally extended to be a bounded operator from $\mvs$ into $\mv$.
	Since $T$ is self-adjoint, $T^*$ is also extended automatically in this way.
	
	Using the fact that $T^*\in \calL(\mvs,\mv)$,
	we conclude that  $T^*\pi(z)^*f_j$ tends to $T^*\pi(z)^*f$ in $\mv$.
	From this and the continuity of the bilinear map $\lan \cdot,\cdot\ran_{\mvs,\mv}$, we obtain 
	\ben\label{4}
	Q_Tf_j(z)=\lan \pi(z)^*f_j,T^*\pi(z)^*f_j\ran_{\mvs,\mv}\rightarrow \lan \pi(z)^*f,T^*\pi(z)^*f\ran_{\mvs,\mv}=Q_Tf(z),
	\een
	where the convergence process is valid for each point $z\in \rdd$ as $j\rightarrow \fy$. By applying Fatou's lemma, we conclude that
	\be
	\begin{split}
	\|\sqrt{Q_Tf}\|_{\lpqm(\rtd)}
	= &
	\|\liminf_{j\rightarrow \fy}\sqrt{Q_Tf_j}\|_{\lpqm(\rtd)}
	\\
	\leq &
	\liminf_{j\rightarrow \fy}\|\sqrt{Q_Tf_j}\|_{\lpqm(\rtd)}
	\\
	\lesssim&_{S,m,v} 
	\liminf_{j\rightarrow \fy}\|f_j\|_{\mpqmd}=\|f\|_{\mpqmd}.
	\end{split}
	\ee
	This completes the proof of statement $(3)\Rightarrow (2)$. 
	
	Next, we turn to the proof $(2)\Rightarrow (1)$. Using Theorem \ref{thm-M} with the fact
	\eqref{3}, we obtain
	\be
	\|\sqrt{Q_Tf_j}\|_{\lpqm(\rtd)} \sim_{S,m,v} \|f_j\|_{\mpqmd},
	\ee
	where $f_j$ is the approximating sequence mentioned above. 
	We claim that 
	\be
	\|\sqrt{Q_Tf_j}\|_{\lpqm(\rtd)}\rightarrow \|\sqrt{Q_Tf}\|_{\lpqm(\rtd)}\ \ \ (j\rightarrow \fy),
	\ee
	then the desired conclusion follows by this claim and the fact that $\|f_j\|_{\mpqmd}\rightarrow \|f\|_{\mpqmd}$ as $j\rightarrow \fy$.
	
	Now, we verify the claim. Using the fact in \eqref{3}, we conclude that
	\be
	\begin{split}
	|\sqrt{Q_Tf_j}-\sqrt{Q_Tf_l}|
	= &
	|\|\mathfrak{V}_Sf_j\|_{L^2}-\|\mathfrak{V}_Sf_l\|_{L^2}|
	\\
	\leq &
	\|\mathfrak{V}_S(f_j-f_l)\|_{L^2}=\sqrt{Q_T(f_j-f_l)}.
	\end{split}
	\ee
	Letting $l\rightarrow \fy$ and using \eqref{4}, we conclude that
	\be
	|\sqrt{Q_Tf_j}-\sqrt{Q_Tf}|\leq \sqrt{Q_T(f_j-f)}.
	\ee
	Taking the $\lpqm$ norm on both sides, we have
	\be
	\|\sqrt{Q_Tf_j}-\sqrt{Q_Tf}\|_{\lpqmtd}
	\leq 
	\|\sqrt{Q_T(f_j-f)}\|_{\lpqm(\rtd)}.
	\ee
	By the conclusion in statement (2), we obtain that
	$\|\sqrt{Q_T(f_j-f)}\|_{\lpqm(\rtd)}  \lesssim_{S,m,v} \|f_j-f\|_{\mpqmd}$.
    The claim follows by
	\be
	\begin{split}
		\big|\|\sqrt{Q_Tf_j}\|_{\lpqmtd}-\|\sqrt{Q_Tf}\|_{\lpqmtd}\big|
		\leq &
		\|\sqrt{Q_Tf_j}-\sqrt{Q_Tf}\|_{\lpqmtd}
		\\
		\leq &
		\|\sqrt{Q_T(f_j-f)}\|_{\lpqm(\rtd)}
		\lesssim 
		\|f_j-f\|_{\mpqmd},
	\end{split}
	\ee
	where the last term tends to zero as $j\rightarrow \fy$.
	
	Finally, if one of the statements $(1-3)$ is valid, 
	by using Theorem \ref{thm-M} and the fact $\|\sqrt{T}\|_{\calS_2}=\sqrt{tr(T)}$,
	we conclude that \eqref{thm-Mc-1} is valid for $f\in L^2$.
	Then the desired conclusion follows by a similar limiting argument used above.
\end{proof}
\begin{remark}
We point out that in Theorem \ref{thm-Mc}, the space $\widetilde{\mpqm}$ can be replaced by $\mpqm$ when $p,q<\fy$,
since $\widetilde{\mpqm}=\mpqm$ in this case.
\end{remark}

\begin{remark}
	In the proof of Theorem \ref{thm-Mc}, one can find that $\sqrt{T}\in \calB_1^v$ implies $T^*\in \calL(\mvs,\mv)$.
	Therefore, Cohen's class distribution $Q_Tf$ in \eqref{df-C} can be defined for $f\in \mvs$.
\end{remark}

\subsection{The relation between $\calB_1^v$ and $\calN^*$}

\begin{proposition}\label{pp-ebns}
	The following embedding relations is valid
	\be
	\calN^*\subset \calB_1^v.
	\ee
\end{proposition}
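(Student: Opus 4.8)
The plan is to verify directly that every $S\in\calN^*$ satisfies the membership condition of $\calB_1^v$, namely $\|\mathfrak{V}_Sg_0\|_{\lv(\rtd;L^2)}<\fy$. Note first that $S\in\calN^*$ is automatically bounded on $L^2(\rd)$: a nuclear operator from $L^2$ into $\mv$ followed by the embedding $\mv\hookrightarrow L^2$ is bounded, so its Hilbert adjoint $S$ lies in $\calL(L^2)$, and it only remains to control $\|\mathfrak{V}_Sg_0\|_{\lv(\rtd;L^2)}$.

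First I would fix a nuclear expansion $S^*=\sum_{n=1}^\fy\phi_n\otimes\xi_n$ with $\phi_n\in\mv$, $\xi_n\in L^2(\rd)$ and $\sum_{n=1}^\fy\|\phi_n\|_{\mv}\|\xi_n\|_{L^2}<\fy$. Since $\|\cdot\|_{L^2}\lesssim\|\cdot\|_{\mv}$, this series converges absolutely in operator norm, so passing to adjoints gives $S=\sum_{n=1}^\fy\xi_n\otimes\phi_n$ with convergence in $\calL(L^2)$. Evaluating on the fixed vector $\pi(z)^*g_0$ yields, for each $z\in\rtd$,
\[
\mathfrak{V}_Sg_0(z)=S\pi(z)^*g_0=\sum_{n=1}^\fy\langle\pi(z)^*g_0,\phi_n\rangle_{L^2}\,\xi_n\qquad\text{in }L^2(\rd),
\]
and since $|\langle\pi(z)^*g_0,\phi_n\rangle_{L^2}|=|\langle g_0,\pi(z)\phi_n\rangle_{L^2}|=|V_{\phi_n}g_0(z)|=|V_{g_0}\phi_n(-z)|$, the triangle inequality in $L^2$ gives the pointwise bound $\|\mathfrak{V}_Sg_0(z)\|_{L^2}\le\sum_{n=1}^\fy|V_{g_0}\phi_n(-z)|\,\|\xi_n\|_{L^2}$.

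Finally I would integrate against $v$: by Tonelli's theorem, the substitution $z\mapsto-z$, and the symmetry $v(-z)=v(z)$,
\[
\|\mathfrak{V}_Sg_0\|_{\lv(\rtd;L^2)}\le\sum_{n=1}^\fy\|\xi_n\|_{L^2}\int_{\rtd}|V_{g_0}\phi_n(-z)|v(z)\,dz=\sum_{n=1}^\fy\|\xi_n\|_{L^2}\,\|V_{g_0}\phi_n\|_{\lv}=\sum_{n=1}^\fy\|\xi_n\|_{L^2}\,\|\phi_n\|_{\mv},
\]
the last equality being Definition \ref{df-M}. Taking the infimum over all nuclear representations of $S^*$ gives $\|\mathfrak{V}_Sg_0\|_{\lv(\rtd;L^2)}\le\|S^*\|_{\calN}<\fy$, hence $S\in\calB_1^v$. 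There is no genuine obstacle here; the only points needing routine care are the termwise application of the operator series to $\pi(z)^*g_0$ with $L^2$-convergence (covered by operator-norm convergence of the expansion) and the interchange of sum and integral (legitimate by Tonelli since every term is nonnegative). As a bonus the argument records the quantitative estimate $\|\mathfrak{V}_Sg_0\|_{\lv(\rtd;L^2)}\le\|S^*\|_{\calN}$, which combined with Theorem \ref{thm-M} essentially recovers Theorem A.
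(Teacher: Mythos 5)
Your proof is correct and follows essentially the same route as the paper: expand $S^*$ (equivalently $S=\sum_n\xi_n\otimes\phi_n$) via a nuclear representation, bound $\|\mathfrak{V}_Sg_0(z)\|_{L^2}$ termwise by the triangle inequality, and integrate against $v$ using $\|V_{g_0}\phi_n\|_{L^1_v}=\|\phi_n\|_{\mv}$, arriving at the same quantitative bound $\|\mathfrak{V}_Sg_0\|_{\lv(\rtd;L^2)}\lesssim\|S^*\|_{\calN}$. The extra care you take (operator-norm convergence of the expansion, Tonelli, the symmetry $v(-z)=v(z)$) only makes explicit what the paper leaves implicit.
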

\begin{proof}
	Although this conclusion is implied in the logical relationship by
	\be
	S\in \calN^* \Rightarrow \eqref{Eqi-1} \Leftrightarrow S\in \calB_1^v,
	\ee
	we would like to give a direct proof here. Let $S\in \calN^*$, then 
	\be
	S=\sum_{n=1}^{\fy} \xi_n\otimes \phi_n\ \ \text{with}\ \ \sum_{n=1}^{\fy}\|\xi_n\|_{L^2}\|\phi_n\|_{\mv}<\fy.
	\ee
	By a direct calculation, we have
	\be
	\begin{split}
		\|\mathfrak{V}_Sg_0\|_{L^2}
		=
		\bigg\|\sum_{n=1}^{\fy} \xi_n\lan \pi(z)^*g_0,\phi_n\ran_{L^2}\bigg\|_{L^2}
		\leq
		\sum_{n=1}^{\fy} \|\xi_n\|_{L^2}|\lan \pi(z)^*g_0,\phi_n\ran_{L^2}|.
	\end{split}
	\ee
	Then,
	\be
	\begin{split}
	\|\mathfrak{V}_Sg_0\|_{\lv(\rtd; L^2)} 
	\leq &
	\sum_{n=1}^{\fy} \|\xi_n\|_{L^2}\|\lan \pi(z)^*g_0,\phi_n\ran_{L^2}\|_{L^1_v}
	\\
	= &
	\sum_{n=1}^{\fy} \|\xi_n\|_{L^2}\|V_{g_0}\phi_n\|_{L^1_v}=\sum_{n=1}^{\fy} \|\xi_n\|_{L^2}\|\phi_n\|_{\mv}.
	\end{split}
	\ee
	We have now completed this proof.
	Moreover, we obtain that $\|\mathfrak{V}_Sg_0\|_{\lv(\rtd; L^2)} \lesssim \|S^*\|_{\calN(L^2,\mv)}$
	by the definition of $\calN(L^2,\mv)$. 
\end{proof}
\begin{remark}
	As we see, the window class $\calB_1^v$ give a complete characterization of  \eqref{Eqi-1},
	while Proposition \ref{pp-ebns} tells us that $\calN^*$ is a subspace of $\calB_1^v$.
	However, we are still confused about whether $\calN^*$ is a proper subset of $\calB_1^v$.
\end{remark}

\section{The $\calB_{p,q}^m$ operator classes}\label{sec-gw}
As mentioned above, the $\calB_1^v$ class is the optimal window class in the framework of bounded operators on $L^2$.
Here, we will introduce some more general classes of operators that may be of independent interest.

\subsection{Start with the $\calB_{\fy}^{v^{-1}}$ class}
In order to introduce our general classes of operators, we would like to start with the weakest assumption.
Let 
\be
\calH_0=\text{span}\{\pi(z)g_0: z\in \rdd\}
\ee
be the linear space of all finite linear combination of time-frequency shifts of the Guassian function $g_0$.
For a submultiplicative weight $v$, we use $\calB_{\fy}^{v^{-1}}$ to denote the collection of linear operators defined on $\calH_0$ satisfying
\be
\|T\pi(z)^*g_0\|_{L^{2}}\leq C_{\calB_{\fy}^{v^{-1}}}v(z)
\ee
for all $T\in \calB_{\fy}^{v^{-1}}$ and $z\in \rdd$, where $C_{\calB_{\fy}^{v^{-1}}}=\Big\|\|T\pi(z)^*g_0\|_{L^{2}}\Big\|_{L^{\fy}_{v^{-1}}(\rdd)}$. We claim that
\be
\calB_{\fy}^{v^{-1}}=\calL(\mv, L^2).
\ee
We first verify $\calL(\mv, L^2)\subset \calB_{\fy}^{v^{-1}}$ by
\be
\|T\pi(z)^*g_0\|_{L^{2}}=\|T\pi(-z)g_0\|_{L^{2}}\leq \|T\|_{\calL(\mv, L^2)}\|\pi(-z)g_0\|_{\mv}
\leq
v(z)\|T\|_{\calL(\mv, L^2)}\|g_0\|_{\mv}.
\ee
This also implies $T\in \calB_1^v$ with
\be
 C_{\calB_{\fy}^{v^{-1}}}\lesssim \|T\|_{\calL(\mv, L^2)}.
\ee
On the other hand, if $T\in \calB_{\fy}^{v^{-1}}$, for any Gabor expansion
\be
f=\sum_{n=1}^{\fy}c_n\pi(z_n)g_0
\ee
with $\sum_{n=1}^{\fy}|c_n|v(z_n)<\fy$, we have
\be
\|T(\sum_{n=1}^{N}c_n\pi(z_n)g_0)\|_{L^2}\leq \sum_{n=1}^{N}|c_n|\|T(\pi(z_n)g_0)\|_{L^2}
\leq
C_{\calB_{\fy}^{v^{-1}}} \sum_{n=1}^{N}|c_n|v(z_n).
\ee
Then, the operator $T\in \calB_{\fy}^{v^{-1}}$, first defined on $\calH_0$ can be uniquely extended to a bounded linear operator 
from $\mv$ into $L^2$ with 
\be
\|T\|_{\calL(\mv,L^2)}\lesssim C_{\calB_{\fy}^{v^{-1}}}.
\ee 
The claim is proved. 
Now, we take $\calB_{\fy}^{v^{-1}}$ as the largest operator class in our discussion, 
just like the status of $M^{\fy}_{v^{-1}}$ in the class of modulation spaces (with $p,q\in [1,\fy]$).

\subsection{The $\calB_{p,q}^m$ classes and their relations}
With $p,q\in [1,\fy]$ and $m\in \calM_v$, the $\calB_{p,q}^m$ is defined as
\be
\calB_{p,q}^m=\{T\in \calB_{\fy}^{v^{-1}}: \Big\|\|T\pi(z)^*g_0\|_{L^2}\Big\|_{\lpqm(\rdd)}< \fy\}
\ee
with the obvious norm. Since $\calB_{\fy}^{v^{-1}}=\calL(\mv, L^2)$, the $\calB_{p,q}^m$ class can be also defined
by 
\be
\calB_{p,q}^m=\{T\in \calL(\mv, L^2): \Big\|\|T\pi(z)^*g_0\|_{L^2}\Big\|_{\lpqm(\rdd)}< \fy\}.
\ee
We write $\calB_{p}^m=\calB_{p,p}^m$. If $m\equiv 1$, denote $\calB_{p,q}=\calB_{p,q}^m$.

At first view, this definition with $p=q=1$ and $m=v$ coincides with the corresponding definition in Theorem \ref{thm-M},
except the description of $T\in \calL(\mv, L^2)$. 
We point out that these two descriptions lead to the same operator class.
This fact will be clarified in Proposition \ref{pp-ebsb1} and Remark \ref{rk-S1v}.

Observe that the modulation space $M^{p,q}_{\tilde{m}}$ can be naturally isometric embedded into $\bpqm$ by
\be
f \longmapsto g_0\otimes f,
\ee
with 
\be
\|g_0\otimes f\|_{\bpqm}
= \Big\|\|g_0 \lan f, \pi(z)^*g_0\ran_{\mvs,\mv} \|_{L^2}\Big\|_{\lpqm(\rdd)}
= \Big\|V_{g_0}f(-z)\Big\|_{\lpqm(\rdd)}
= \|f\|_{M^{p,q}_{\tilde{m}}}.
\ee
In this sense, modulation space $\mpqm$ can be regarded as a closed subspace of $\bpqm$.

Like the case of modulation spaces, the $\calB_{p,q}^m$ classes also have some similar embedding relations.

\begin{proposition}\label{pp-ebsp}
	Let $p_i,q_i\in [1,\fy]$, $m_i\in \calM_v$, $i=1,2$. If $p_2\geq p_1$, $q_2\geq q_1$ and $m_2\leq m_1$,
	we have
	\be
	\calB_{p_1,q_1}^{m_1}\subset \calB_{p_2,q_2}^{m_2}\ \ \ \text{with}\ \ \ \ 	\|T\|_{\calB_{p_2,q_2}^{m_2}}\lesssim \|T\|_{\calB_{p_1,q_1}^{m_1}}
	\ee
	for all $T\in \calB_{p_1,q_1}^{m_1}$.
\end{proposition}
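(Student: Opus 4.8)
The plan is to reduce the embedding of operator classes to the corresponding embedding of weighted mixed-norm Lebesgue spaces, which is a standard and purely measure-theoretic fact. The key observation is that, by definition, the norm of an operator $T$ in $\calB_{p,q}^m$ is exactly the $\lpqm(\rdd)$-norm of the single scalar function $z \mapsto \Phi_T(z) := \|T\pi(z)^*g_0\|_{L^2}$. Thus, if one knows the inclusion $L^{p_1,q_1}_{m_1}(\rdd) \subset L^{p_2,q_2}_{m_2}(\rdd)$ with $\|F\|_{L^{p_2,q_2}_{m_2}} \lesssim \|F\|_{L^{p_1,q_1}_{m_1}}$ under the hypotheses $p_2 \geq p_1$, $q_2 \geq q_1$, $m_2 \leq m_1$, then applying this to $F = \Phi_T$ immediately gives $\|T\|_{\calB_{p_2,q_2}^{m_2}} = \|\Phi_T\|_{L^{p_2,q_2}_{m_2}} \lesssim \|\Phi_T\|_{L^{p_1,q_1}_{m_1}} = \|T\|_{\calB_{p_1,q_1}^{m_1}} < \infty$, which in particular shows $T \in \calB_{p_2,q_2}^{m_2}$. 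So the whole proposition is essentially a corollary of this function-space embedding.

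First I would establish the scalar embedding $L^{p_1,q_1}_{m_1}(\rdd) \subset L^{p_2,q_2}_{m_2}(\rdd)$. The weight comparison $m_2 \leq m_1$ is trivial: it only shrinks the norm, so it suffices to treat $m_1 = m_2 =: m$. The exponent part is where one must be slightly careful, because the global embedding $L^{p_1,q_1} \subset L^{p_2,q_2}$ with $p_2 \geq p_1, q_2 \geq q_1$ is \emph{not} true on all of $\rdd$ (it holds on domains of finite measure, or for the sequence side, but not for Lebesgue spaces on $\rd$). The correct statement here uses that the relevant functions live inside a nicer subspace: since $T \in \calB_{\fy}^{v^{-1}} = \calL(\mv, L^2)$, the function $\Phi_T(z) = \|T\pi(z)^*g_0\|_{L^2}$ is controlled pointwise, via Proposition \ref{pp-cwi} (or directly via Lemma \ref{lm-cSTFT}) and the identity $\Phi_T(z) = \|(V_{S^*e_n}g_0(z))_n\|_{l^2}$, by a convolution $\Phi_T \lesssim \Phi_T \ast |V_{g_0}g_0|$ — more usefully, one has $\Phi_T(z) \le (\Phi_T \ast h)(z)$ for a fixed Schwartz function $h$ (a suitable STFT of $g_0$ with itself, with rapid decay). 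Equivalently, $\Phi_T$ lies in the image of a convolution with a nice kernel, hence its $\lpqm$ norms for different exponents are all comparable in the "increasing" direction by Young-type convolution inequalities $L^{p_1,q_1}_m \ast L^{r}_v \subset L^{p_2,q_2}_m$ with the appropriate $r \in [1,\fy]$ determined by $1 + 1/p_2 = 1/p_1 + 1/r$ and similarly for $q$; since $p_2 \ge p_1$ and $q_2 \ge q_1$ one can indeed pick $r \in [1,\fy]$, and $v$-moderateness of $m$ makes the weighted version valid. This is the same mechanism already used repeatedly in the paper (e.g. in Propositions \ref{pp-cwid} and \ref{pp-cwi}).

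Putting these together, I would: (i) note $\|T\|_{\calB_{p,q}^m} = \|\Phi_T\|_{\lpqm(\rdd)}$ by definition; (ii) reduce to $m_1 = m_2$ by monotonicity of the weight; (iii) use the pointwise bound $\Phi_T \le \Phi_T \ast h$ with $h(z) = |V_{g_0}g_0(z)| \|g_0\|_{L^2}^{-2}$ (rapidly decreasing, hence in $L^r_v$ for every $r$), coming from Lemma \ref{lm-cSTFT} applied with $\psi = g_0$; (iv) apply the weighted Young inequality $\lpqm[p_1,q_1] \ast L^r_v \subset \lpqm[p_2,q_2]$ with $r$ chosen so the exponents match, to conclude $\|\Phi_T\|_{\lpqm[p_2,q_2]} \lesssim \|\Phi_T \ast h\|_{\lpqm[p_2,q_2]} \lesssim C_v^{m}\|h\|_{L^r_v}\|\Phi_T\|_{\lpqm[p_1,q_1]}$. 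The main obstacle, and the only genuinely nontrivial point, is step (iii)–(iv): one must avoid the false "unrestricted" Lebesgue embedding and instead exploit that $\Phi_T$ is a convolution-type (band-limited-like) object, so that raising the integrability exponents costs only a harmless convolution factor; everything else is bookkeeping with the definitions and the $v$-moderateness of the weights.
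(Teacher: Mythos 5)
Your proposal is correct and follows essentially the same route as the paper: reduce to $m_1=m_2$, observe $\|T\|_{\calB_{p,q}^m}$ is the $\lpqm$-norm of $\Phi_T(z)=\|T\pi(z)^*g_0\|_{L^2}$, derive the pointwise bound $\Phi_T\lesssim \Phi_T\ast|V_{g_0}g_0|$ from Lemma \ref{lm-cSTFT} via the $\ell^2$-identity (the paper does the vector-valued step by the randomization trick of Propositions \ref{pp-cwid}--\ref{pp-cwi}, which you implicitly invoke), and conclude with the weighted mixed-norm Young inequality $L^{p_1,q_1}_m\ast L^{p_3,q_3}_v\subset L^{p_2,q_2}_m$. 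Your explicit warning that the unrestricted embedding $L^{p_1,q_1}\subset L^{p_2,q_2}$ fails, and that the convolution structure of $\Phi_T$ is what saves the argument, is precisely the point of the paper's proof.
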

\begin{proof}
	Without loss of generality, we assume $m_1=m_2=m$.
	Let $\{e_n\}_n$ be a orthonormal basis of $L^2(\rd)$.
	For $T\in \calL(\mv,L^2)$, we have
	\ben\label{pp-ebsp-2}
	\begin{split}
	\|T\pi(z)^*g_0\|_{L^2}
	= &
	\left(\sum_{n=1}^{\fy}|\lan T\pi(z)^*g_0, e_n\ran_{L^2}|^2\right)^{1/2}
	\\
	= &
	\left(\sum_{n=1}^{\fy}|\lan T^*e_n, \pi(z)^*g_0\ran_{\mvs,\mv}|^2\right)^{1/2}
	=
	\left(\sum_{n=1}^{\fy}\big|V_{g_0}(T^*e_n)(-z)\big|^2\right)^{1/2}.
	\end{split}
	\een
	By Lemma \ref{lm-cSTFT}, we have
	\be
	|V_{g_0}(T^*e_n)|\leq |V_{g_0}g_0|\ast |V_{g_0}(T^*e_n)|.
	\ee
	Using this and a randomization method as in the proof of Propositions \ref{pp-cwi} and \ref{pp-cwid}, we find that
	\ben\label{pp-ebsp-1}
	\|T\pi(z)^*g_0\|_{L^2}\lesssim \|T\pi(z)^*g_0\|_{L^2}\ast |V_{g_0}g_0|.
	\een
	By the mixed-norm Young's inequality $L^{p_1,q_1}_m\ast L^{p_3,q_3}_v\subset L^{p_2,q_2}_m$
	with
	\be
	1+\frac{1}{p_2}=\frac{1}{p_1}+\frac{1}{p_3},\ \ 1+\frac{1}{q_2}=\frac{1}{q_1}+\frac{1}{q_3},
	\ee
	we obtain
	\be
	\Big\|\|T\pi(z)^*g_0\|_{L^2}\Big\|_{L^{p_2,q_2}_m}
	\lesssim
	\Big\|\|T\pi(z)^*g_0\|_{L^2}\Big\|_{L^{p_1,q_1}_m} 
	\Big\|V_{g_0}g_0\Big\|_{L^{p_3,q_3}_v}.
	\ee
	This proof has been finished.
\end{proof}
Recall that $v^{-1}(z)\lesssim m(z)\lesssim v(z)$, we have an immediate conclusion as follows. 
\begin{corollary}
	Let $p,q\in [1,\fy]$ and $m\in \calM_v$. We have the continuous embedding relation
	 \be
	\bv \subset \bpqm \subset \bvv=\calL(\mv,L^2).
	\ee
\end{corollary}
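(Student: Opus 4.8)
The plan is to deduce both inclusions directly from Proposition \ref{pp-ebsp}, by choosing the exponents and weights appropriately, after first recording the elementary two-sided bound $v^{-1}(z)\lesssim m(z)\lesssim v(z)$ which holds for every $m\in\calM_v$.

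First I would verify this two-sided bound. Taking $z_1=z$, $z_2=0$ in the moderateness inequality $m(z_1+z_2)\le C_v^m v(z_1)m(z_2)$ gives $m(z)\le C_v^m m(0)v(z)$, i.e.\ $m\lesssim v$; taking instead $z_1=-z$, $z_2=z$ gives $m(0)\le C_v^m v(-z)m(z)=C_v^m v(z)m(z)$, where we used the assumed symmetry $v(-z)=v(z)$, hence $m(z)\gtrsim v(z)^{-1}$, i.e.\ $v^{-1}\lesssim m$.

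Next I would apply Proposition \ref{pp-ebsp} with $(p_1,q_1,m_1)=(1,1,v)$ and $(p_2,q_2,m_2)=(p,q,m)$: the hypotheses $p\ge 1$ and $q\ge 1$ are trivial, and $m\lesssim v$ was just shown, so $\bv=\calB_{1,1}^v\subset\bpqm$ with $\|T\|_{\bpqm}\lesssim\|T\|_{\bv}$. Then I would apply Proposition \ref{pp-ebsp} a second time with $(p_1,q_1,m_1)=(p,q,m)$ and $(p_2,q_2,m_2)=(\fy,\fy,v^{-1})$: here $\fy\ge p$ and $\fy\ge q$ hold trivially and $v^{-1}\lesssim m$ was just shown, so $\bpqm\subset\calB_{\fy}^{v^{-1}}=\bvv$ with the corresponding norm control. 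Finally, the identification $\bvv=\calB_{\fy}^{v^{-1}}=\calL(\mv,L^2)$ is exactly the claim established in Subsection 4.1.

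There is essentially no obstacle here; the corollary is a direct specialization of Proposition \ref{pp-ebsp}. The only point requiring a line of care is that Proposition \ref{pp-ebsp} as stated demands $m_2\le m_1$ pointwise, whereas $\calM_v$-moderateness only yields $m\lesssim v$ and $v^{-1}\lesssim m$ up to multiplicative constants; this is immaterial because the $\calB_{p,q}^m$-norm depends on $m$ only up to multiplicative constants, so both embeddings and their norm estimates go through unchanged.
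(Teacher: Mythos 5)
Your proposal is correct and follows essentially the same route as the paper: the corollary is obtained directly from Proposition \ref{pp-ebsp} together with the bounds $v^{-1}\lesssim m\lesssim v$ for $m\in\calM_v$, plus the identification $\bvv=\calL(\mv,L^2)$ from Subsection 4.1. Your extra remark that the pointwise hypothesis $m_2\le m_1$ in Proposition \ref{pp-ebsp} only holds up to multiplicative constants, which is harmless since the $\calB_{p,q}^m$-norm is unchanged up to constants, is a sensible clarification that the paper leaves implicit.
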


\begin{proposition}
	Let $p,q\in [1,\fy]$ and $m\in \calM_v$, the $\calB_{p,q}^m$ class is a Banach space.
\end{proposition}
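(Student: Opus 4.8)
The plan is to reduce the completeness of $\calB_{p,q}^m$ to the completeness of two spaces that are already known to be Banach, namely the operator space $\calL(\mv,L^2)$ and the weighted mixed-norm space $\lpqm(\rdd)$, after a routine check of the normed-space axioms; the embedding $\calB_{p,q}^m\subset\calL(\mv,L^2)$ recorded above (a consequence of Proposition \ref{pp-ebsp} together with $\calB_{\fy}^{v^{-1}}=\calL(\mv,L^2)$) will do most of the work of identifying limits.

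First I would check that $\calB_{p,q}^m$, equipped with $\|T\|_{\calB_{p,q}^m}:=\big\|\,\|T\pi(z)^*g_0\|_{L^2}\,\big\|_{\lpqm(\rdd)}$, is a normed vector space. It is clearly a linear subspace of $\calL(\mv,L^2)$, and subadditivity and positive homogeneity of $\|\cdot\|_{\calB_{p,q}^m}$ follow by applying the triangle inequality (resp.\ homogeneity) of the $L^2$-norm pointwise in $z$ to $\|(S+T)\pi(z)^*g_0\|_{L^2}\le\|S\pi(z)^*g_0\|_{L^2}+\|T\pi(z)^*g_0\|_{L^2}$ and then the triangle inequality of the $\lpqm$-norm. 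For definiteness I would invoke the continuous inclusion noted above: combining $\calB_{p,q}^m\subset\calB_{\fy}^{v^{-1}}$ with $\|\cdot\|_{\calB_{\fy}^{v^{-1}}}\lesssim\|\cdot\|_{\calB_{p,q}^m}$ (Proposition \ref{pp-ebsp}, valid since $p,q\le\fy$ and $v^{-1}\lesssim m$) and the identification $\calB_{\fy}^{v^{-1}}=\calL(\mv,L^2)$ with $\|T\|_{\calL(\mv,L^2)}\lesssim\|T\|_{\calB_{\fy}^{v^{-1}}}$, one gets $\|T\|_{\calL(\mv,L^2)}\lesssim\|T\|_{\calB_{p,q}^m}$; hence $\|T\|_{\calB_{p,q}^m}=0$ forces $T=0$.

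For completeness, let $\{T_k\}_k$ be a Cauchy sequence in $\calB_{p,q}^m$. By the embedding just recalled it is Cauchy in the Banach space $\calL(\mv,L^2)$, so there is $T\in\calL(\mv,L^2)$ with $T_k\to T$ in $\calL(\mv,L^2)$; in particular, since $\pi(z)^*g_0\in\mv$ for each fixed $z$, we have $T_k\pi(z)^*g_0\to T\pi(z)^*g_0$ in $L^2(\rd)$ for every $z\in\rdd$. Now fix $k$; the nonnegative (continuous, hence measurable) functions $z\mapsto\|(T_k-T_j)\pi(z)^*g_0\|_{L^2}$ converge pointwise on $\rdd$ to $z\mapsto\|(T_k-T)\pi(z)^*g_0\|_{L^2}$ as $j\to\fy$. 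Since $\lpqm(\rdd)$ is an iterated weighted Lebesgue space, Fatou's lemma holds for $\|\cdot\|_{\lpqm}$ (apply the classical Fatou lemma successively to the inner and the outer integral), which gives
\[
\|T_k-T\|_{\calB_{p,q}^m}
\le\liminf_{j\to\fy}\big\|\,\|(T_k-T_j)\pi(z)^*g_0\|_{L^2}\,\big\|_{\lpqm(\rdd)}
=\liminf_{j\to\fy}\|T_k-T_j\|_{\calB_{p,q}^m}.
\]
Given $\varepsilon>0$, choosing $N$ with $\|T_k-T_j\|_{\calB_{p,q}^m}<\varepsilon$ for all $k,j\ge N$, the displayed bound yields $\|T_k-T\|_{\calB_{p,q}^m}\le\varepsilon$ for all $k\ge N$. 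In particular $T_k-T\in\calB_{p,q}^m$, so $T=T_N-(T_N-T)\in\calB_{p,q}^m$, and $T_k\to T$ in $\calB_{p,q}^m$, which is the claim.

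This is essentially a soft-analysis reduction and I do not expect a genuine obstacle. The only points deserving a word of care are: that Cauchyness in $\calB_{p,q}^m$ transfers to $\calL(\mv,L^2)$ (guaranteed by the embedding above); that operator-norm convergence in $\calL(\mv,L^2)$ yields $L^2$-convergence of $T_k\pi(z)^*g_0$ for each frozen $z$ (immediate, as $\|\pi(z)^*g_0\|_{\mv}\le v(z)\|g_0\|_{\mv}<\fy$); and the validity of Fatou's lemma for the mixed-norm $\|\cdot\|_{\lpqm}$ — each of these is standard.
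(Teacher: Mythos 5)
Your proof is correct and follows essentially the same route as the paper: embed $\calB_{p,q}^m$ continuously into the Banach space $\calL(\mv,L^2)$ (via Proposition \ref{pp-ebsp} and $\calB_{\fy}^{v^{-1}}=\calL(\mv,L^2)$), take the operator-norm limit $T$ of the Cauchy sequence, and use Fatou's lemma for the mixed norm applied to $z\mapsto\|(T_j-T_k)\pi(z)^*g_0\|_{L^2}$ to get both $T\in\calB_{p,q}^m$ and convergence in $\calB_{p,q}^m$. The only cosmetic difference is that the paper first applies Fatou to $\|T_n\pi(z)^*g_0\|_{L^2}$ to show $T\in\calB_{p,q}^m$ and then again to the differences, whereas you deduce membership directly from $T_k-T$ having finite norm; this is an inessential streamlining.
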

\begin{proof}
For a Cauchy sequence $\{T_n\}_n$ in $\calB_{p,q}^m$, by the continuous embedding $\bpqm \subset \calL(\mv,L^2)$, 
$\{T_n\}_n$ is also Cauchy in $\calL(\mv,L^2)$.
Since $\calL(\mv,L^2)$ is a Banach space,  there exists an operator $T\in \calL(\mv,L^2)$ such that $T_n$ tends to $T$ in the topology of operator norm in  $\calL(\mv,L^2)$. 
For every $z\in \zd$, we have
\be
\lim_{n\rightarrow \fy}\|T_n\pi(z)^*g_0\|_{L^2}=\|T\pi(z)^*g_0\|_{L^2}.
\ee
By using Fatou's lemma, we verify $T\in \bpqm$ by
\be
\begin{split}
   \Big\|\|T\pi(z)^*g_0\|_{L^2}\Big\|_{\lpqm}
   = &
   \Big\|\liminf_{n\rightarrow \fy}\|T_n\pi(z)^*g_0\|_{L^2}\Big\|_{\lpqm}
   \\
   \leq & 
   \liminf_{n\rightarrow \fy}\Big\|\|T_n\pi(z)^*g_0\|_{L^2}\Big\|_{\lpqm}=\liminf_{n\rightarrow \fy}\|T_n\|_{\bpqm}\leq C.
\end{split}
\ee
For sufficiently large $n, m$, we have
\be
\|T_m-T_n\|_{\calB_{p,q}^m}=
\Big\|\|(T_m-T_n)\pi(z)^*g_0\|_{L^2}\Big\|_{\lpqm}<\ep.
\ee
Letting $m\rightarrow \fy$, we obtain for sufficiently large $n$
\be
\begin{split}
\Big\|\|(T-T_n)\pi(z)^*g_0\|_{L^2}\Big\|_{\lpqm}
= &
\Big\|\liminf_{m\rightarrow \fy}\|(T_m-T_n)\pi(z)^*g_0\|_{L^2}\Big\|_{\lpqm}
\\
\leq & 
\liminf_{m\rightarrow \fy}\Big\|\|(T_m-T_n)\pi(z)^*g_0\|_{L^2}\Big\|_{\lpqm}<\ep.
\end{split}
\ee
From this, we have that  $T_n$ tends to $T$ in the topology of $\bpqm$. This proof is completed.
\end{proof}

\subsection{Connection with the Schatten class}
\begin{proposition}\label{pp-ebsb1}
	Let $1\leq p\leq 2$.
	The following embedding relation is valid,
	\be
	\calB_p \subset \calS_p
	\ee
	in the sense that every $T\in \calB_p$ can be extended to a bounded operator on $\calL(L^2)$
	with
	\be
	\|T\|_{\calS_p}\leq \|T\|_{\calB_p}.
	\ee
	In particular, for $p=2$ we have
	\be
	\calB_2 = \calS_2.
	\ee
\end{proposition}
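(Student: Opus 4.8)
The plan is to settle the endpoint $p=2$ by a direct Moyal-type computation and then to deduce the full range $1\le p\le 2$, with the sharp constant $1$, by a duality argument; I avoid interpolating the scale $\{\calB_p\}$ against $\{\calS_p\}$ because that does not obviously produce the constant $1$. The one structural identity I use throughout is the weak resolution of the identity
\be
I_{L^2(\rd)}=\int_{\rdd}(\pi(z)^*g_0)\otimes(\pi(z)^*g_0)\,dz,
\ee
which follows from Moyal's identity (Lemma \ref{lm-Moyal}), from $\|g_0\|_{L^2}=1$, and from the cancellation of the phase in $\pi(z)^*g_0=e^{-2\pi i x\cdot\xi}\pi(-z)g_0$ inside the rank-one operator.

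First I would dispose of $p=2$. Expanding in an orthonormal basis exactly as in the computation \eqref{pp-ebsp-2} (applied to $S^*$ in place of $T$) gives, for $S\in\calL(L^2(\rd))$, that $\|S^*\pi(z)^*g_0\|_{L^2}^2=\sum_n|V_{g_0}(Se_n)(-z)|^2$, hence by Tonelli and Moyal's identity
\be
\Big\|\,\|S^*\pi(\cdot)^*g_0\|_{L^2}\,\Big\|_{L^2(\rdd)}^2=\sum_n\|Se_n\|_{L^2}^2=\|S\|_{\calS_2}^2 .
\ee
Reading this with $S=T$: if $T\in\calB_2$ then $\sum_n\|T^*e_n\|_{L^2}^2=\|T\|_{\calB_2}^2<\fy$, which forces $T^*$ (hence $T$) to be Hilbert--Schmidt with $\|T\|_{\calS_2}=\|T\|_{\calB_2}$; conversely every $T\in\calS_2\subset\calL(L^2(\rd))$ restricts to an element of $\calL(\mv,L^2)=\bvv$ (because $\mv\hookrightarrow L^2$), and the same identity shows $\|T\|_{\calB_2}=\|T\|_{\calS_2}$. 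Thus $\calB_2=\calS_2$ isometrically, and by Proposition \ref{pp-ebsp} one gets, for every $1\le p\le 2$, the chain $\calB_p\subset\calB_2=\calS_2\subset\calL(L^2(\rd))$, so that an element of $\calB_p$ is already a bona fide bounded operator on $L^2(\rd)$.

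Next I would record the auxiliary dual estimate
\be
\Big\|\,\|S^*\pi(\cdot)^*g_0\|_{L^2}\,\Big\|_{L^q(\rdd)}\le\|S\|_{\calS_q},\qquad 2\le q\le\fy,
\ee
which, through the duality $\|T\|_{\calS_p}=\sup\{|tr(TS)|:\ S\ \text{finite rank},\ \|S\|_{\calS_{p'}}\le1\}$, is only needed for finite-rank $S$. For such $S$ write a singular value decomposition $S=\sum_j s_j\,(u_j\otimes v_j)$ with $\{u_j\}$ and $\{v_j\}$ orthonormal; orthonormality of $\{v_j\}$ gives $\|S^*\pi(z)^*g_0\|_{L^2}^2=\sum_j s_j^2\,|V_{g_0}u_j(-z)|^2$. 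Now $\int_{\rdd}|V_{g_0}u_j|^2=1$ (Moyal), $\sup_z|V_{g_0}u_j(z)|\le1$ (Cauchy--Schwarz), and $\sum_j|V_{g_0}u_j(z)|^2\le1$ (Bessel); hence the linear map $(a_j)\mapsto\sum_j a_j\,|V_{g_0}u_j(\cdot)|^2$ is bounded $\ell^1\to L^1(\rdd)$ and $\ell^\fy\to L^\fy(\rdd)$ with norm $\le1$ in both cases, so by the Riesz--Thorin theorem it is bounded $\ell^r\to L^r(\rdd)$ with norm $\le1$ for every $r\ge1$. Taking $r=q/2$ and $a_j=s_j^2$ yields the estimate for all $q\in[2,\fy]$.

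Finally I would run the duality argument. Fix $1\le p\le 2$ and $T\in\calB_p$; by the above $T\in\calL(L^2(\rd))$, so $\|T\|_{\calS_p}$ is given by the displayed supremum. For a finite-rank $S$ with $\|S\|_{\calS_{p'}}\le1$, using $tr(TS)=tr(ST)$, inserting the resolution of the identity, and interchanging the finite-rank trace with the integral,
\be
tr(TS)=\int_{\rdd}tr\big(ST\,(\pi(z)^*g_0\otimes\pi(z)^*g_0)\big)\,dz=\int_{\rdd}\lan T\pi(z)^*g_0,\ S^*\pi(z)^*g_0\ran_{L^2}\,dz;
\ee
Cauchy--Schwarz in $L^2(\rd)$, Hölder in $z$ with exponents $p,p'$, and the auxiliary estimate with $q=p'\ge2$ then give
\be
|tr(TS)|\le\Big\|\,\|T\pi(\cdot)^*g_0\|_{L^2}\,\Big\|_{L^p(\rdd)}\Big\|\,\|S^*\pi(\cdot)^*g_0\|_{L^2}\,\Big\|_{L^{p'}(\rdd)}\le\|T\|_{\calB_p}\,\|S\|_{\calS_{p'}}\le\|T\|_{\calB_p},
\ee
and taking the supremum over $S$ gives $\|T\|_{\calS_p}\le\|T\|_{\calB_p}$, in particular $T\in\calS_p$. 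The step I expect to cost the most care is the bookkeeping that makes this pairing legitimate — that an element of $\calB_p$ really is bounded on $L^2(\rd)$ (which is exactly what the preliminary chain $\calB_p\subset\calB_2=\calS_2$ provides) and the justification of the trace--integral interchange — rather than any individual estimate; and it is the sharp constant $1$ in the statement that dictates this route over a softer interpolation of the $\calB_p$ scale.
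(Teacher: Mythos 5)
Your argument is correct, and after the shared preliminary steps it diverges from the paper's route at the main inequality. Like the paper, you settle $p=2$ by the orthonormal-basis expansion plus Moyal (this is exactly how the paper reduces $\calB_2=\calS_2$ to the computation of Theorem \ref{thm-L2}), and like the paper you use Proposition \ref{pp-ebsp} to get $\calB_p\subset\calB_2=\calS_2$, so that $T$ is a genuine (compact) operator on $L^2(\rd)$. But for $\|T\|_{\calS_p}\leq\|T\|_{\calB_p}$ the paper argues directly: it takes the singular value decomposition of $T$ itself, uses the Bessel bound $\sum_j|\lan\pi(z)^*g_0,\eta_j\ran|^2\leq1$ and H\"older pointwise in $z$ to compare the $\ell^2$-weighted and $\ell^p$-weighted sums, and then integrates using Moyal ($\|V_{g_0}\eta_j\|_{L^2(\rdd)}=1$) to read off $\big(\sum_j\la_j^p\big)^{1/p}\leq\|T\|_{\calB_p}$. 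You instead dualize: you pair $T$ against finite-rank $S$ via the weak resolution of the identity and use the estimate $\big\|\|S^*\pi(\cdot)^*g_0\|_{L^2}\big\|_{L^{p'}}\leq\|S\|_{\calS_{p'}}$ for $p'\geq2$, which is precisely Proposition \ref{pp-ebsb2} of the paper applied to $S^*$ (and which you reprove by $\ell^1\to L^1$, $\ell^\fy\to L^\fy$ bounds plus Riesz--Thorin, where the paper's own proof of that proposition uses the same pointwise H\"older trick and needs no interpolation). So your proof exhibits the $p\leq2$ embedding as the trace dual of the $q\geq2$ embedding — a nice structural point the paper does not make explicit — at the cost of invoking the Schatten duality formula, the trace--integral interchange (harmless for finite-rank $S$ and bounded $T$), and interpolation; the paper's direct SVD computation is shorter and self-contained. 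Both routes deliver the sharp constant $1$.
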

\begin{proof}
	First, we verify $\calB_2 = \calS_2$. This fact has been proved in Theorem \ref{thm-L2} with $T\in \calL(L^2)$.
	The only remaining thing we have to do is checking the proof for $T\in \calL(\mv, L^2)$.
	As in the proof of Theorem \ref{thm-L2}, we find that for $T\in \calL(\mv, L^2)$,
	\ben\label{pp-semb-1}
	\begin{split}
		\|T\|_{\calB_2}
		=\left(\sum_{n=1}^{\fy}\|T^*e_n\|_{L^2}^2\right)^{1/2}.
	\end{split}
    \een
	From this, we conclude that $T^*\in \calS_2$. 
	Using this and the fact that $\mv$ is dense in $L^2$, 
	we conclude that the operator $T\in \calB_2$ can be uniquely extended to a bounded operator on $L^2(\rd)$ satisfying 
	\be
	\|T\|_{\calS_2}=\|T^*\|_{\calS_2}=\|T\|_{\calB_2}.
	\ee
	This completes the proof of $\calB_2\subset \calS_2$. The inverse direction follows directly by \eqref{pp-semb-1}.

	Now, we turn to the proof of $\calB_p \subset \calS_p$.
	Using Proposition \ref{pp-ebsp}, we conclude that
	\be
	\calB_p\subset \calB_2=\calS_2,\ \ \ \ 1\leq p\leq 2.
	\ee
	Then, the operator $T\in \calB_p$ is compact, so it can be decomposed by
	\be
	T=\sum_{j=1}^{\fy}\la_j \xi_j\otimes \eta_j,
	\ee
	where $(\la_j)$ denotes the singular values of $T$, $(\xi_j)_j$ and $(\eta_j)_j$ are two orthonormal systems on $L^2(\rd)$.
	With this decomposition, we have
	\be
	\begin{split}
		\|T\pi(z)^*g_0\|_{L^2}
		= 
		\Big(\sum_{j=1}^{\fy}\la_j^2\big|\lan \pi(z)^*g_0, \eta_j \ran\big|^2\Big)^{1/2}
		\geq 
		\Big(\sum_{j=1}^{\fy}\la_j^p\big|\lan \pi(z)^*g_0, \eta_j \ran\big|^2\Big)^{1/p},
	\end{split}
	\ee
	where we use the H\"{o}lder inequality with the fact
	\be
	\sum_{j=1}^{\fy}\big|\lan \pi(z)^*g_0, \eta_j \ran\big|^2\leq \|\pi(z)^*g_0\|_{L^2}^2=\|g_0\|_{L^2}^2=1.
	\ee
	Then
	\be
	\begin{split}
		\|T\|_{\calB_p}
		= &
		\Big(\int_{\rdd}\|T\pi(z)^*g_0\|^p_{L^2}dz\Big)^{1/p}
		\\
		\geq &
		\Big(\int_{\rdd}\sum_{j=1}^{\fy}\la_j^p\big|\lan \pi(z)^*g_0, \eta_j \ran\big|^2dz\Big)^{1/p}
		\\
		= &
		\Big(\sum_{j=1}^{\fy}\la_j^p\int_{\rdd}\big|\lan \pi(z)^*g_0, \eta_j \ran\big|^2dz\Big)^{1/p}
		=
		\Big(\sum_{j=1}^{\fy}\la_j^p\|V_{g_0} \eta_j\|_{L^2(\rdd)}^2\Big)^{1/p}=\Big(\sum_{j=1}^{\fy}\la_j^p\Big)^{1/p}.
	\end{split}
	\ee
	We have $\calB_p \subset \calS_p$ with $\|T\|_{\calS_p}\leq \|T\|_{\calB_p}$. 
\end{proof}

\begin{remark}\label{rk-S1v}
	Let $v$ be a submultiplicative weight. Recall that $v(z)\geq 1$.
	Using this proposition, we have $\calB_1^v\subset \calB_1\subset \calS_1$.
	Then the operator classes $\calB_1^v$ can be re-represented as
	$$\calB_1^v:=\{S\in \calS_1: \Big\|\|S\pi(z)^*g_0\|_{L^2}\Big\|_{L^1_v(\rdd)} <\infty\}$$
	or as the definition in Theorem \ref{thm-M}, that is, $\calB_1^v:=\{S\in \calL(L^2): \Big\|\|S\pi(z)^*g_0\|_{L^2}\Big\|_{L^1_v(\rdd)} <\infty\}$.
\end{remark}

\begin{proposition}\label{pp-ebsb2}
	Let $2< p\leq \fy$.
	We have the following embedding relation
	\be
	\calS_p \subset \calB_p
	\ee
	with
	\be
	\|T\|_{\calB_p}\leq \|T\|_{\calS_p}.
	\ee
\end{proposition}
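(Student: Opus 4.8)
The plan is to run the proof of Proposition~\ref{pp-ebsb1} in reverse: there H\"older's inequality was used because $p/2\le 1$, and here I would use the opposite convexity (Jensen-type) inequality since now $p/2\ge 1$. First I would dispose of the endpoint $p=\fy$. Here $\calS_\fy=\calL(L^2(\rd))$, and since $\pi(z)$ is unitary with $\|g_0\|_{L^2}=1$,
\be
\|T\|_{\calB_\fy}=\sup_{z\in\rdd}\|T\pi(z)^*g_0\|_{L^2}\le \|T\|_{\calL(L^2)}\sup_{z\in\rdd}\|\pi(z)^*g_0\|_{L^2}=\|T\|_{\calS_\fy}
\ee
for every $T\in\calL(L^2(\rd))$; in particular $T$ restricts to a bounded operator $\mv\to L^2$, so $T\in\calB_\fy$ with the stated norm bound.

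For $2<p<\fy$, take $T\in\calS_p$. Since $T$ is compact I would use its singular value decomposition $T=\sum_{j}\la_j\,\xi_j\otimes\eta_j$, with $(\la_j)$ the singular values and $(\xi_j)_j,(\eta_j)_j$ orthonormal systems in $L^2(\rd)$, so that, exactly as in Proposition~\ref{pp-ebsb1},
\be
\|T\pi(z)^*g_0\|_{L^2}^2=\sum_j\la_j^2\,\big|\lan\pi(z)^*g_0,\eta_j\ran\big|^2 .
\ee
Writing $c_j=c_j(z)=|\lan\pi(z)^*g_0,\eta_j\ran|^2\ge 0$, Bessel's inequality together with $\|\pi(z)^*g_0\|_{L^2}=1$ gives $\sum_j c_j(z)\le 1$ for every $z$. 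The key pointwise step is then the elementary inequality
\be
\Big(\sum_j\la_j^2 c_j\Big)^{p/2}\le \sum_j\la_j^p c_j\qquad\text{whenever } c_j\ge 0 \text{ and } \textstyle\sum_j c_j\le 1 ,
\ee
which holds because $p/2\ge 1$: normalizing $(c_j)$ to a probability vector and applying Jensen's inequality to the convex function $t\mapsto t^{p/2}$ produces a factor $(\sum_j c_j)^{p/2-1}\le 1$. Consequently $\|T\pi(z)^*g_0\|_{L^2}^p\le \sum_j\la_j^p\,|\lan\pi(z)^*g_0,\eta_j\ran|^2$ pointwise in $z$.

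Integrating in $z$ and invoking Moyal's identity (Lemma~\ref{lm-Moyal}) as in Proposition~\ref{pp-ebsb1} then finishes the argument:
\be
\|T\|_{\calB_p}^p=\int_{\rdd}\|T\pi(z)^*g_0\|_{L^2}^p\,dz\le \sum_j\la_j^p\int_{\rdd}\big|\lan\pi(z)^*g_0,\eta_j\ran\big|^2\,dz=\sum_j\la_j^p\,\|V_{g_0}\eta_j\|_{L^2(\rdd)}^2=\sum_j\la_j^p=\|T\|_{\calS_p}^p ,
\ee
using $\|\eta_j\|_{L^2}=\|g_0\|_{L^2}=1$. Since also $T\in\calL(L^2)\subset\calL(\mv,L^2)$, this shows $T\in\calB_p$ with $\|T\|_{\calB_p}\le\|T\|_{\calS_p}$. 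I do not expect a genuine obstacle here; the only points that need any care are the pointwise convexity inequality above (the exact mirror of the H\"older step in Proposition~\ref{pp-ebsb1}) and the correct normalization $\|g_0\|_{L^2}=1$ in Moyal's identity.
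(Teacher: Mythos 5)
Your proof is correct and follows essentially the same route as the paper: singular value decomposition, the pointwise comparison of the weighted $\ell^2$ and $\ell^p$ sums using $\sum_j|\lan\pi(z)^*g_0,\eta_j\ran|^2\le 1$, and Moyal's identity to integrate in $z$; your Jensen argument is just the standard equivalent of the H\"older step the paper invokes, and your treatment of $p=\fy$ matches the paper's observation $\calS_\fy=\calL(L^2)\subset\calL(M^1,L^2)=\calB_\fy$.
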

\begin{proof}
	The case of $p=\fy$ follows by
	\be
	\calS_{\fy}=\calL(L^2)\subset \calL(M^1,L^2)=\calB_{\fy}.
	\ee
	For $T\in \calS_p$ with $p\in (2,\fy)$ we write 
	\be
	T=\sum_{j=1}^{\fy}\la_j \xi_j\otimes \eta_j
	\ee
	with singular values $(\la_j)$, where $(\xi_j)_j$ and $(\eta_j)_j$ are two orthonormal systems on $L^2(\rd)$.
	
	Using a similar way as in the proof of Proposition \ref{pp-ebsb1}, we conclude that
	\be
	\begin{split}
		\|T\pi(z)^*g_0\|_{L^2}
		= 
		\Big(\sum_{j=1}^{\fy}\la_j^2\big|\lan \pi(z)^*g_0, \eta_j \ran\big|^2\Big)^{1/2}
		\leq 
		\Big(\sum_{j=1}^{\fy}\la_j^p\big|\lan \pi(z)^*g_0, \eta_j \ran\big|^2\Big)^{1/p}.
	\end{split}
	\ee
	Then
	\be
	\begin{split}
		\|T\|_{\calB_p}
		= 
		\Big(\int_{\rdd}\|T\pi(z)^*g_0\|^p_{L^2}dz\Big)^{1/p}
		\leq
		\Big(\sum_{j=1}^{\fy}\la_j^p\|V_{g_0} \eta_j\|_{L^2(\rdd)}^2\Big)^{1/p}=\Big(\sum_{j=1}^{\fy}\la_j^p\Big)^{1/p}.
	\end{split}
	\ee
	We have $\calS_p \subset \calB_p$ with $\|T\|_{\calB_p}\leq \|T\|_{\calS_p}$. 
\end{proof}
\begin{remark}
	For $p\in [1,2)$, take $T_1=f\otimes g$ with $f\in L^2, g \in L^2\bs M^p$. One can verify that $T_1\in \calS_1\bs \calB_p\subset \calS_p\bs \calB_p$.
	From this and the relation $\calB_p\subset \calS_p$, we conclude that $\calB_p$ is a proper subset of $\calS_p$ for $p\in [1,2)$.
	If $p\in (2,\fy]$,  take $T_2=f\otimes g$ with $f\in L^2, g \in M^p\bs L^2$. We find that $T_2\in \calB_p\bs \calS_{\fy}\subset \calB_p\bs \calS_p$.
	This and the relation $\calS_p\subset \calB_p$ implies that $\calS_p$ is a proper subset of $\calB_p$ for $p\in (2,\fy]$.
	We also point out that for $p\in (2,\fy]$, $\calB_p$ is no longer a subset of $\calL(L^2)$, which is quite different from  the Schatten class.
\end{remark}

\subsection{$\bpqm$ as the operator-valued modulation spaces} 
According to the previous description in this paper, the window class $\calB_1^v$ is an extension of the classical window class $\mv$,
and the operator class $\calB_{p,q}^m$ is a generalization of $\calB_1^v$.
In Subsection 4.2, we point out that the classical modulation space $M^{p,q}_m$ can be isometric embedded into $\bpqm$.
Here, from another perspective, we will make clear that the operator classes $\calB_{p,q}^m$ can be exactly regarded as the modulation spaces in the level of operators on $\calL(\mv,L^2)$.

For any $f\in \mvs=\calL(\mv, \bbC)$, recall the classical STFT by
\be
V_{g_0}f(z)=\lan f, \pi(z)g_0\ran_{\mvs,\mv}.
\ee
We extend the STFT by
\be
\scrV_{g_0}T(z)=T(\pi(z)g_0),\ \ \ \ T\in \calL(\mv,L^2).
\ee
Then the $\calB_{p,q}^m$ norm can be re-represented by
\be
\|T\|_{\calB_{p,q}^m}=\Big\|\|T\pi(z)^*g_0\|_{L^2}\Big\|_{L^{p,q}_m}=\Big\|\scrV_{g_0}T(z) \Big\|_{L^{p,q}_{\tilde{m}}(\rdd, L^2)}.
\ee
Define the operator-valued modulation spaces $\scrM^{p,q}_m$ by
\be
\scrM^{p,q}_m=\{T\in \calL(\mv,L^2): \Big\|\scrV_{g_0}T(z) \Big\|_{L^{p,q}_{m}(\rdd,L^2)}<\fy\}.
\ee
We have the equivalent relation
\be
\calB_{p,q}^m= \scrM^{p,q}_{\tilde{m}}.
\ee
From this, the basic properties of $\bpqm$ established above can be naturally transferred to $\scrM^{p,q}_{\tilde{m}}$.
Specifically, we have
\be
\scrM^1_v \subset \scrM^{p,q}_m \subset \scrM^{\fy}_{v^{-1}}=\calL(\mv,L^2),\ \ \ 1\leq p,q\leq \fy.
\ee
Note that the distribution space $\mvs=\calL(\mv, \bbC)$ in classical modulation spaces
is replaced by the ``operator-valued distribution space''  $\calL(\mv,L^2)$.

Now that we know $\bpqm$ can be regarded as the modulation spaces in the level of operators on $\calL(\mv,L^2)$.
Naturally, all the properties of classical modulation spaces are expected to be represented in the corresponding
operator-valued modulation spaces or operator classes.
Here, we only point out the independence of window functions.

\begin{proposition}[The window functions of $\bpqm$]\label{pp-winsp}
	The definition of $\bpqm$ is independent of the window $\va\in \mv\bs\{0\}$.
	Different windows yield equivalent norms. 
	For all $\va\in \mv\bs\{0\}$, we have
\be
\calB_{p,q}^m=\{T\in \calL(\mv, L^2): \Big\|\|T\pi(z)^*\va\|_{L^2}\Big\|_{\lpqm(\rdd)}< \fy\}.
\ee
\end{proposition}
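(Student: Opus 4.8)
The plan is to prove the stronger statement that for \emph{any} two windows $\va,\psi\in\mv\setminus\{0\}$ and any operator $T\in\calL(\mv,L^2)$ the quantities $\big\|\|T\pi(z)^*\va\|_{L^2}\big\|_{\lpqm(\rdd)}$ and $\big\|\|T\pi(z)^*\psi\|_{L^2}\big\|_{\lpqm(\rdd)}$ are comparable, with comparison constants independent of $T$. Since the requirement $T\in\calL(\mv,L^2)=\calB_\fy^{v^{-1}}$ occurs on both sides of the claimed identity, nothing extra is needed to match domains, and specializing $\psi=g_0$ gives exactly the asserted description of $\calB_{p,q}^m$. Moreover, the argument is just a window-change version of the randomization technique already used in Propositions \ref{pp-cwid}, \ref{pp-cwi} and in the proof of Proposition \ref{pp-ebsp}.

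First I would rewrite the pointwise norm in terms of an orthonormal basis $\{e_n\}_n$ of $L^2(\rd)$, exactly as in \eqref{pp-ebsp-2}: using $T^*e_n\in\mvs$ and $\langle\pi(z)^*\va,h\rangle=e^{-2\pi ix\cdot\xi}\overline{V_\va h(-z)}$, one obtains
$$\|T\pi(z)^*\va\|_{L^2}=\Big(\sum_{n=1}^\fy|V_\va(T^*e_n)(-z)|^2\Big)^{1/2},$$
and the same with $\psi$ in place of $\va$. Because a $v$-moderate weight is even, the reflection $z\mapsto-z$ does not change the $\lpqm$-norm, so it suffices to compare the two $\ell^2$-square functions $\big(\sum_n|V_\va(T^*e_n)(\cdot)|^2\big)^{1/2}$ and $\big(\sum_n|V_\psi(T^*e_n)(\cdot)|^2\big)^{1/2}$ in $\lpqm(\rdd)$.

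Next I would run the randomization step. Let $r_n$ be independent Rademacher variables. Applying Lemma \ref{lm-cSTFT} to $h=\sum_{n=1}^N r_nT^*e_n\in\mvs$ with windows $\va$ and $\psi$ gives $\big|\sum_{n=1}^N r_nV_\va(T^*e_n)\big|\lesssim|V_\va\psi|\ast\big|\sum_{n=1}^N r_nV_\psi(T^*e_n)\big|$; taking expectations, using Tonelli to pull $\mathbb{E}$ through the convolution, and invoking Khinchin's inequality (Lemma \ref{L. Kinchine ineq.}) on both sides yields, after letting $N\to\fy$ by monotone convergence, the pointwise bound
$$\Big(\sum_{n=1}^\fy|V_\va(T^*e_n)|^2\Big)^{1/2}\lesssim|V_\va\psi|\ast\Big(\sum_{n=1}^\fy|V_\psi(T^*e_n)|^2\Big)^{1/2}.$$
Since $\va,\psi\in\mv$ one has $V_\va\psi\in L^1_v(\rdd)$ (change of window in the Feichtinger algebra, cf. \cite[Proposition 11.4.2]{GrochenigBook2013}, whence $\|V_\va\psi\|_{L^1_v}\sim\|V_{g_0}\psi\|_{L^1_v}=\|\psi\|_{\mv}$), so the mixed-norm Young inequality $\lpqm\ast L^1_v\subset\lpqm$ gives $\big\|\|T\pi(z)^*\va\|_{L^2}\big\|_{\lpqm}\lesssim\|V_\va\psi\|_{L^1_v}\,\big\|\|T\pi(z)^*\psi\|_{L^2}\big\|_{\lpqm}$. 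Exchanging the roles of $\va$ and $\psi$ gives the reverse inequality, hence the norm equivalence and the window-independence; taking $\psi=g_0$ completes the proof.

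Every step is a routine adaptation of material already in the paper, so I do not expect a serious obstacle. The only points needing a little care are the verification that $V_\va\psi\in L^1_v$ and the remark that the evenness of the weight lets one discard the reflection $z\mapsto-z$ — both standard. If anything, the ``hard part'' is merely the bookkeeping inside the randomization argument, which is identical in spirit to the proof of Proposition \ref{pp-ebsp}.
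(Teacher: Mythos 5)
Your argument is essentially the paper's own proof: the paper likewise rewrites $\|T\pi(z)^*\va\|_{L^2}$ as the $\ell^2$ square function of $\big(V_{\va}(T^*e_n)(-z)\big)_n$, dominates one square function by the other convolved with a cross-STFT via Lemma \ref{lm-cSTFT} together with Rademacher randomization and Khinchin's inequality, and concludes with the Young inequality $\lpqm\ast L^1_v\subset\lpqm$; comparing a general pair $(\va,\psi)$ instead of $(\va,g_0)$ is an inessential variation. The one point to repair is your dismissal of the reflection: only the submultiplicative weight $v$ may be assumed even, whereas a $v$-moderate weight $m$ need not satisfy $m(-z)\sim m(z)$ (for instance $m(x,\xi)=e^{x_1}$ is moderate with respect to $v(z)=e^{|z|}$), so the claim that ``the reflection $z\mapsto -z$ does not change the $\lpqm$-norm'' is not justified in general. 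The slip is harmless and fixable in one line: either keep the reflection on both sides, i.e.\ from the pointwise bound $F_{\va}\lesssim |V_{\va}\psi|\ast F_{\psi}$ with $F_{\va}(w)=\big(\sum_n|V_{\va}(T^*e_n)(w)|^2\big)^{1/2}$ deduce $\|T\pi(z)^*\va\|_{L^2}=F_{\va}(-z)\lesssim\big(|V_{\psi}\va|\ast\|T\pi(\cdot)^*\psi\|_{L^2}\big)(z)$ using $|V_{\va}\psi(-z)|=|V_{\psi}\va(z)|$ (this is exactly the form of the inequality in the paper's proof), or note that the reflected weight $\tilde m$ is again $v$-moderate and apply Young's inequality with $\tilde m$. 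With that adjustment your proof coincides with the paper's.
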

\begin{proof}
For $T\in \calL(\mv,L^2)$, $\va\in \mv$, 
by a similar calculation as in the proof of Proposition \ref{pp-ebsp}, we have
\be
\begin{split}
	\|T\pi(z)^*g_0\|_{L^2}
	=
	\left(\sum_{n=1}^{\fy}\big|V_{g_0}(T^*e_n)(-z)\big|^2\right)^{1/2},\ \ \ 
	\|T\pi(z)^*\va\|_{L^2}
	=
	\left(\sum_{n=1}^{\fy}\big|V_{\va}(T^*e_n)(-z)\big|^2\right)^{1/2}.
\end{split}
\ee

Using Lemma \ref{lm-cSTFT} and a randomization method, we conclude that
\be
\|T\pi(z)^*\va\|_{L^2}\lesssim \|T\pi(z)^*g_0\|_{L^2}\ast |V_{g_0}\va|
\ee
and
\be
\|T\pi(z)^*g_0\|_{L^2}\lesssim \|\va\|_{L^2}^{-2}\|T\pi(z)^*\va\|_{L^2}\ast |V_{\va}g_0|.
\ee
By the Young inequality $\lpqm\ast L^1_v\subset \lpqm$, we conclude that
\be
\Big\|\|T\pi(z)^*\va\|_{L^2}\Big\|_{\lpqm} 
\lesssim \Big\|\|T\pi(z)^*g_0\|_{L^2}\Big\|_{\lpqm} \|V_{g_0}\va\|_{L^1_v}
\ee
and
\be
\Big\|\|T\pi(z)^*g_0\|_{L^2}\Big\|_{\lpqm} 
\lesssim 
\|\va\|_{L^2}^{-2} \Big\|\|T\pi(z)^*\va\|_{L^2}\Big\|_{\lpqm} \|V_{\va}g_0\|_{L^1_v}.
\ee
We have now completed this proof.
\end{proof}

\subsection{Application to the localization operators}

The localization operator $\calA_a^{\va_1,\va_2}$ with symbol $a\in \calS(\rdd)$, analysis window $\va_1$ and synthesis window $\va_2$ is defined formally by means of the STFT as
\be
\calA_a^{\va_1,\va_2}f=V_{\va_2}^*(aV_{\va_1}f)
=
\int_{\rdd} a(x,\xi)V_{\va_1}f(x,\xi)M_{\xi}T_x\va_2dxd\xi
\ee
whenever the  vector-valued integral makes sense. 
Usually, it is more convenient to interpret the definition of localization operator in a weak sense as follows
\be
\lan \calA_a^{\va_1,\va_2}f,\ g\ran_{\calS'(\rd),\calS(\rd)}=\lan a,\overline{ V_{\va_1}f} V_{\va_2}g \ran_{\calS'(\rdd),\calS(\rdd)},\ \ \ \ f,g\in \calS(\rd),
\ee
where the right term makes sense for $a\in \calS'(\rdd)$ and $\va_1,\va_2,f,g\in \calS(\rd)$. From this, one can find that $\calA_a^{\va_1,\va_2}$
is a well-defined continuous operator from $\calS(\rd)$ into $\calS'(\rd)$.

Here, we focus on the property of symbol $a$ when the corresponding localization operator $\calA_a^{\va_1,\va_2}$ belongs to the operator classes defined in this section. 
First, we recall a classical theorem connecting the symbols and the corresponding localization operators of $\calS_p$ classes.
See also \cite{Cordero2003}.

\begin{lemma}\cite[Theorem 1]{Cordero2005}\label{lm-losp}
	Let $1\leq p\leq \fy$. 
	\bn
	\item
	The mapping $(a,\va_1,\va_2)\longmapsto \calA^{\va_1,\va_2}_a$ is bounded from $M^{p,\fy}(\rdd)\times M^1(\rd)\times M^1(\rd)$ into $\calS_p$ with a norm estimate
	\be
	\|\calA^{\va_1,\va_2}_a\|_{\calS_p}\leq B\|a\|_{M^{p,\fy}(\rdd)}\|\va_1\|_{M^1(\rd)}\|\va_2\|_{M^1(\rd)}.
	\ee
	\item
	Conversely, if $\calA^{\va_1,\va_2}_a\in \calS_p$ for all windows $\va_1,\va_2\in \mv(\rd)$
	with 
	\be
	\|\calA^{\va_1,\va_2}_a\|_{\calS_p}
	\leq 
	B\|\va_1\|_{M^1(\rd)}\|\va_2\|_{M^1(\rd)},
	\ee
	then $a\in M^{p,\fy}(\rdd)$.
	\en
\end{lemma}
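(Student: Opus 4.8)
Fix the windows $\va_1,\va_2\in M^1(\rd)$ and regard $a\mapsto\calA_a^{\va_1,\va_2}$ as a linear map; the plan is to prove the endpoints $p=\fy$ and $p=1$ and then interpolate in $a$. For $p=\fy$, i.e.\ $\calS_\fy=\calL(L^2)$, I would start from the weak definition: for $f,g\in\calS(\rd)$,
\[
\big|\lan\calA_a^{\va_1,\va_2}f,g\ran\big|=\big|\lan a,\overline{V_{\va_1}f}\,V_{\va_2}g\ran\big|\le\|a\|_{M^{\fy,\fy}(\rdd)}\,\big\|\overline{V_{\va_1}f}\,V_{\va_2}g\big\|_{M^{1}(\rdd)},
\]
using the duality $(M^1)^*=M^{\fy,\fy}$. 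The substantive input is the product estimate $\|\overline{V_{\va_1}f}\,V_{\va_2}g\|_{M^1(\rdd)}\lesssim\|\va_1\|_{M^1}\|\va_2\|_{M^1}\|f\|_{L^2}\|g\|_{L^2}$, which I would obtain from Lemma \ref{lm-FTPSTFT} and the Fourier invariance of $M^1$: taking the Fourier transform turns $\overline{V_{\va_1}f}\,V_{\va_2}g$ into a linear reparametrisation of $\overline{V_gf}\cdot V_{\va_2}\va_1$, where $V_{\va_2}\va_1\in M^1(\rdd)$ and $V_gf\in L^2(\rdd)$ (Lemma \ref{lm-Moyal}), and a Wiener-amalgam/module bookkeeping then places the product in $M^1(\rdd)$ with the stated norm. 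This yields $\|\calA_a^{\va_1,\va_2}\|_{\calS_\fy}\lesssim\|a\|_{M^{\fy,\fy}}\|\va_1\|_{M^1}\|\va_2\|_{M^1}$.

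For the endpoint $p=1$ I would pass through the Weyl calculus, using $\calA_a^{\va_1,\va_2}=\mathrm{Op}^{w}(a\ast W(\va_2,\va_1))$, where $W(\va_2,\va_1)$ is the cross-Wigner distribution — a reparametrisation of $V_{\va_1}\va_2$, hence in $M^1(\rdd)$ with norm $\lesssim\|\va_1\|_{M^1}\|\va_2\|_{M^1}$. By Lemma \ref{lm-covm} with $p=1$, the symbol $a\ast W(\va_2,\va_1)$ lies in $M^1(\rdd)$ with norm $\lesssim\|a\|_{M^{1,\fy}}\|\va_1\|_{M^1}\|\va_2\|_{M^1}$, and $\mathrm{Op}^{w}(M^1(\rdd))\subset\calS_1$ (expand the symbol in a Gabor series and use that the Weyl transform of a time--frequency-shifted Gaussian is a rank-one operator of constant trace norm; equivalently, interpolate $\mathrm{Op}^w(M^{\fy,1})\subset\calL(L^2)$ with $\mathrm{Op}^w(L^2)=\calS_2$). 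Since $(\calS_1,\calS_\fy)_{[\theta]}=\calS_p$ with $1/p=1-\theta$, complex interpolation of $a\mapsto\calA_a^{\va_1,\va_2}$ between the two endpoints gives the bound $M^{p,\fy}\to\calS_p$; the one delicate point is that the interpolation space of $(M^{1,\fy},M^{\fy,\fy})$ need not be all of $M^{p,\fy}$, which I would circumvent by Stein's interpolation theorem for the analytic family $z\mapsto(a\mapsto\calA_a^{\va_1,\va_2})$, or by first proving the estimate for $a$ a finite Gabor sum and passing to limits.

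\textbf{Part (2).} For the converse, assume $\|\calA_a^{\va_1,\va_2}\|_{\calS_p}\le B\|\va_1\|_{M^1}\|\va_2\|_{M^1}$ for all $\va_1,\va_2\in M^1(\rd)$. From the Hilbert--Schmidt pairing,
\[
\lan\calA_a^{\va_1,\va_2},\psi_2\otimes\psi_1\ran_{\calS_p,\calS_{p'}}=\lan a,\overline{V_{\va_1}\psi_1}\,V_{\va_2}\psi_2\ran,\qquad\psi_1,\psi_2\in M^1(\rd),
\]
so $|\lan a,\overline{V_{\va_1}\psi_1}\,V_{\va_2}\psi_2\ran|\le B\|\va_1\|_{M^1}\|\va_2\|_{M^1}\|\psi_1\|_{L^2}\|\psi_2\|_{L^2}$. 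The plan is to choose the four windows among time--frequency shifts of $g_0$ so that $\overline{V_{\va_1}\psi_1}\,V_{\va_2}\psi_2$ realises (up to a unimodular constant and a fixed dilation) a time--frequency shift $\pi(\zeta,w)\Phi_0$ of the Gaussian $\Phi_0:=|V_{g_0}g_0|^2\in M^1(\rdd)$, so that the short-time Fourier transform $V_{\Phi_0}a(\zeta,w)$ is recovered as such a pairing; testing instead against $\calS_{p'}$-finite combinations of these rank-one operators — which form a Bessel system in every Schatten norm, by the exponential decay of coherent-state overlaps and Lemma \ref{lm-Moyal} — should then upgrade the pointwise bound to the $L^{p,\fy}$-control that defines $a\in M^{p,\fy}(\rdd)$.

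\emph{Main obstacle.} The skeleton above is routine; the two points that need real work are (i) the product estimate behind the $p=\fy$ endpoint — this is exactly the mechanism making $M^{\fy,\fy}$-symbol localization operators bounded on $L^2$, and rests on Lemma \ref{lm-FTPSTFT} together with amalgam-space module properties; and (ii) in Part (2), passing from a pointwise (i.e.\ $M^{\fy,\fy}$) bound on $V_{\Phi_0}a$ to the quantitative $M^{p,\fy}$ bound — the coherent-state test data carry a Gaussian decay in the frequency variable, so a genuinely careful choice of test operators (or an inverse-Berezin/deconvolution argument) is required here. The $q=\fy$ interpolation subtlety in Part (1) is a standard but non-negligible annoyance.
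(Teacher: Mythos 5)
You should note at the outset that the paper itself gives no proof of this lemma: it is quoted verbatim, with constants, from Cordero--Gr\"ochenig \cite[Theorem 1]{Cordero2005}, so there is no internal argument to compare against; your sketch has to be judged against the known proof from that literature. For part (1) your two endpoints are essentially right (the $p=\fy$ case via the product estimate $\|\overline{V_{\va_1}f}\,V_{\va_2}g\|_{M^1}\lesssim\|\va_1\|_{\mv}\|\va_2\|_{\mv}\|f\|_{L^2}\|g\|_{L^2}$ is exactly the amalgam mechanism of Cordero--Gr\"ochenig, and $p=1$ via $\calA_a^{\va_1,\va_2}=\mathrm{Op}^w(a\ast W(\va_2,\va_1))$ with $\mathrm{Op}^w(M^1)\subset\calS_1$ is standard), but the interpolation step is a genuine gap and neither of your proposed fixes closes it: Stein's theorem applied to a ``family'' that does not actually depend on the parameter is nothing but complex interpolation of the couple $(M^{1,\fy},M^{\fy,\fy})$, which is precisely the problem you flagged; and finite Gabor sums are not norm dense in $M^{p,\fy}$ (outer index $\fy$), so ``pass to limits'' requires a weak-$*$ approximation plus a Fatou-type lower semicontinuity of Schatten norms that you do not supply. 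The standard repair avoids interpolating symbol spaces with outer index $\fy$ altogether: use the sharper convolution relation $M^{p,\fy}\ast M^1\subset M^{p,1}$ (rather than Lemma \ref{lm-covm}) together with the Schatten mapping property of the Weyl transform on $M^{p,1}$, obtained by interpolating $M^{1}\to\calS_1$ against the Sj\"ostrand-class bound $M^{\fy,1}\to\calS_\fy$, where the inner index $1$ makes the interpolation of the symbol spaces unproblematic.

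Part (2) contains the more serious gap, which you yourself concede. Pairing $\calA_a^{\va_1,\va_2}$ against a single rank-one operator, with windows and test functions chosen among time--frequency shifts of $g_0$ so that the product of STFTs is a shift of $|V_{g_0}g_0|^2$ (the same computation the paper performs in the proof of Theorem \ref{thm-loap}(2), with $\va_2=M_{\zeta_1}T_{-\zeta_2}g_0$), only yields the uniform bound $|V_{\Psi}a|\lesssim B$, i.e.\ $a\in M^{\fy}$; it says nothing about $p<\fy$. The decisive step is to convert the coherent-state matrix elements into an $L^{p}$ bound in the first variable, uniformly in the second, and the standard device for this is the inequality $\bigl(\sum_k|\lan Tf_k,h_k\ran|^p\bigr)^{1/p}\le\|T\|_{\calS_p}$ for orthonormal sequences $(f_k),(h_k)$, applied with $f_k,h_k$ time--frequency shifts of $g_0$ along a sufficiently sparse lattice (a Riesz sequence, orthonormalized with uniform constants), followed by an averaging over translates of the lattice to recover $\|V_{\Psi}a(\cdot,\zeta)\|_{L^p}\lesssim B\|g_0\|_{\mv}^2$ for every $\zeta$. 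Your assertion that the chosen rank-one operators ``form a Bessel system in every Schatten norm'' is exactly this missing lemma, not a routine fact, and without it the argument stalls at $M^{\fy}$. So the skeleton is the right one, but as written both the interpolation step in (1) and the quantitative upgrade in (2) remain unproved.
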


For the connection of the symbols and the corresponding localization operators of $\calB_p$ classes, we give the following proposition.

\begin{theorem}\label{thm-loap}
		Let $1\leq p\leq \fy$, $a\in \calS'(\rdd)$.
	\bn
	\item
	The mapping $(a,\va_1,\va_2)\longmapsto \calA^{\va_1,\va_2}_a$ is bounded from $M^{p,\fy}(\rdd)\times M^1(\rd)\times M^1(\rd)$ into $\calB_p$ with a norm estimate
	\be
	\|\calA^{\va_1,\va_2}_a\|_{\calB_p}\leq B\|a\|_{M^{p,\fy}}\|\va_1\|_{M^1(\rd)}\|\va_2\|_{M^1(\rd)}.
	\ee
	\item
	Conversely, if $\calA^{\va_1,\va_2}_a\in \calB_p$ for all windows $\va_1,\va_2\in \mv(\rd)$
	with 
	\be
	\|\calA^{\va_1,\va_2}_a\|_{\calB_p}
	\leq 
	B_a\|\va_1\|_{M^1(\rd)}\|\va_2\|_{M^1(\rd)},
	\ee
	then $a\in M^{p,\fy}(\rdd)$ with $\|a\|_{M^{p,\fy}}\lesssim B_a$.
	\en
\end{theorem}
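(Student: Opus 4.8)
The plan is to route both directions through the pseudodifferential picture. Recall (with the usual normalizations) that $\calA_a^{\va_1,\va_2}=L_\sigma$, the Weyl operator with symbol $\sigma=a\ast W(\va_2,\va_1)$, where $W(\va_2,\va_1)$ is the cross-Wigner distribution. Since $\va_1,\va_2\in M^1$ forces $W(\va_2,\va_1)\in M^1(\rdd)$ with $\|W(\va_2,\va_1)\|_{M^1}\lesssim\|\va_1\|_{M^1}\|\va_2\|_{M^1}$, Lemma \ref{lm-covm} gives immediately $\sigma\in M^p(\rdd)$ with $\|\sigma\|_{M^p}\lesssim\|a\|_{M^{p,\fy}}\|\va_1\|_{M^1}\|\va_2\|_{M^1}$. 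Thus for \textbf{(1)} it suffices to prove that $\sigma\in M^p(\rdd)$ implies $L_\sigma\in\calB_p$ with $\|L_\sigma\|_{\calB_p}\lesssim\|\sigma\|_{M^p}$ \emph{when $1\le p\le 2$}; the range $2<p\le\fy$ is disposed of without the symbol, by combining Lemma \ref{lm-losp}(1) (which gives $\calA_a^{\va_1,\va_2}\in\calS_p$ with the desired bound, using $\va_i\in M^1$) with Proposition \ref{pp-ebsb2} ($\calS_p\subset\calB_p$, norm non-increasing).

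To prove the claim for $p\le 2$ I would start from the Weyl-calculus covariance $\pi(z)L_\sigma\pi(z)^*=L_{T_z\sigma}$, which by unitarity of $\pi(z)$ turns the $\calB_p$-norm into
\[
\|L_\sigma\|_{\calB_p}^p=\int_{\rdd}\|L_{T_z\sigma}g_0\|_{L^2}^p\,dz .
\]
Next, $V_{g_0}(L_\tau g_0)(w)=\lan\tau,W(\pi(w)g_0,g_0)\ran$, and $W(\pi(w)g_0,g_0)$ is a unimodular multiple of a time-frequency shift — at the point $\kappa w$, for an affine embedding $\kappa\colon\rdd\to\rddd$ — of the Gaussian $\Phi_0:=W(g_0,g_0)$ on $\rdd$, so $|V_{g_0}(L_\tau g_0)(w)|=|V_{\Phi_0}\tau(\kappa w)|$. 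Feeding $\tau=T_z\sigma$ back in realizes $\|L_\sigma\|_{\calB_p}$ as a mixed $L^p_z(L^2_w)$-type norm of $V_{\Phi_0}\sigma$ on $\rddd$, and the reproducing-kernel bound $|V_{\Phi_0}\sigma|\le c\,|V_{\Phi_0}\sigma|\ast G$ ($G$ a Gaussian on $\rddd$) together with the mixed-norm Young inequality bounds this by $\|V_{\Phi_0}\sigma\|_{L^p(\rddd)}=\|\sigma\|_{M^p}$. The admissibility of the Young exponents is precisely where $p\le 2$ enters; carrying out the phase and linear-change-of-variables bookkeeping behind $|V_{g_0}(L_\tau g_0)(w)|=|V_{\Phi_0}\tau(\kappa w)|$ cleanly, and identifying the resulting mixed-norm space, is the main technical point of this half.

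For \textbf{(2)} with $1\le p\le 2$, Proposition \ref{pp-ebsb1} gives $\calB_p\subset\calS_p$ with $\|\cdot\|_{\calS_p}\le\|\cdot\|_{\calB_p}$, so the hypothesis yields $\|\calA_a^{\va_1,\va_2}\|_{\calS_p}\le B_a\|\va_1\|_{M^1}\|\va_2\|_{M^1}$ for all windows, and (the proof of) Lemma \ref{lm-losp}(2) — which reconstructs $V_{g_0}a$ from $\calA_a^{g_0,g_0}$ — gives $a\in M^{p,\fy}$ with $\|a\|_{M^{p,\fy}}\lesssim B_a$. When $2<p\le\fy$ the inclusion reverses, so I would argue by duality: the operator Moyal identity $tr(B^*A)=\int_{\rdd}\lan A\pi(z)^*g_0,B\pi(z)^*g_0\ran_{L^2}\,dz$ (a consequence of Lemma \ref{scov}) yields $|tr(B^*A)|\le\|A\|_{\calB_p}\|B\|_{\calB_{p'}}$, so taking $B=g\otimes f$ (for which $\|B\|_{\calB_{p'}}=\|g\|_{L^2}\|f\|_{M^{p'}}$) and $A=\calA_a^{\va_1,\va_2}$ gives
\[
|\lan a,\overline{V_{\va_1}f}\,V_{\va_2}g\ran_{L^2(\rdd)}|=|\lan\calA_a^{\va_1,\va_2}f,g\ran|\lesssim B_a\,\|\va_1\|_{M^1}\|\va_2\|_{M^1}\|f\|_{M^{p'}}\|g\|_{L^2}.
\]
Specializing $\va_1=\va_2=g_0$, $f=\pi(a_1)g_0$, $g=\pi(a_2)g_0$ (and further modulating the windows) turns $\overline{V_{g_0}f}\,V_{g_0}g$ into unimodular multiples of all time-frequency shifts — at an invertible affine image of $(a_1,a_2)$ — of the Gaussian $|V_{g_0}g_0|^2$ on $\rdd$, with the window norms $\|f\|_{M^{p'}}$, $\|g\|_{L^2}$ staying bounded, so $a$ can be paired against every Gabor atom of an arbitrary test function $\Psi\in M^{p',1}(\rdd)$ (note $(M^{p',1})^*=M^{p,\fy}$).

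The real crux is then to upgrade these atom-wise bounds to the \emph{summed} $\ell^{p,\fy}$ estimate on the Gabor coefficients of $a$ needed for $|\lan a,\Psi\ran|\lesssim B_a\|\Psi\|_{M^{p',1}}$, i.e. $\|a\|_{M^{p,\fy}}\lesssim B_a$; this forces one to exploit the full $L^p_z(\ell^2)$ geometry of the $\calB_p$-norm, not merely its finiteness, together with a careful choice of windows — and here, unlike the $p\le2$ case, there is no reduction to the Schatten-class converse to fall back on. (If one prefers to avoid a bare-hands Gabor computation at this step, the identification $\calB_p=\scrM^p$ and the ensuing duality $\calB_p\cong(\calB_{p'})^*$ let one run the argument against the test localization operators supplied by part (1) with exponent $p'$.) To summarize, the two steps I expect to be genuinely hard are the mixed-norm estimate of part (1) for $p\le 2$ and the converse of part (2) for $p>2$; the other four sub-cases are routine combinations of Lemmas \ref{lm-covm}, \ref{lm-losp} and Propositions \ref{pp-ebsb1}, \ref{pp-ebsb2}.
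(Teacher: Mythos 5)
Your part (1) is essentially sound and, once the ``bookkeeping'' you defer is unwound, lands on the same mechanism as the paper: for $2\le p\le\fy$ both arguments combine Lemma \ref{lm-losp}(1) with Proposition \ref{pp-ebsb2}, and for $p<2$ your reduction to ``$\sigma\in M^p(\rdd)\Rightarrow L_\sigma\in\calB_p$'' via the Weyl symbol $\sigma=a\ast W(\va_2,\va_1)$ and Lemma \ref{lm-covm} is a legitimate variant of the paper's direct computation of $\lan\calA_a^{\va_1,\va_2}\pi(z)g_0,\pi(z')g_0\ran$ via Lemma \ref{lm-FTPSTFT}; both routes end at the same reproducing inequality on $\rddd$ and the same mixed-norm Young inequality $L^{r,1}\ast L^{p}\subset L^{2,p}$ with $1+1/2=1/r+1/p$, whose admissibility is exactly the restriction $p\le 2$. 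Your part (2) for $1\le p\le 2$, via Proposition \ref{pp-ebsb1} and the converse half of Lemma \ref{lm-losp}, is also correct (and shorter than the paper's argument on that range), modulo the small point that you need the quantitative form $\|a\|_{M^{p,\fy}}\lesssim B_a$ of that converse, which the lemma as stated does not display but its proof does give.

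The genuine gap is part (2) for $2<p\le\fy$, which you yourself flag as ``the real crux'' and do not resolve. Your duality route only produces the bound $|\lan a,\overline{V_{g_0}f}\,V_{g_0}g\ran|\lesssim B_a\|f\|_{M^{p'}}\|g\|_{L^2}$ atom by atom, i.e.\ a pointwise bound on the Gabor coefficients of $a$; this yields $a\in M^{\fy}$ but not the $L^p$-integrability in the time variable required for $M^{p,\fy}$, and upgrading it to $|\lan a,\Psi\ran|\lesssim B_a\|\Psi\|_{M^{p',1}}$ would require a duality statement $\calB_p\cong(\calB_{p'})^*$ together with a norming family of localization operators, none of which is established. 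The missing idea is much more elementary and is uniform in $p$: with $\Psi=|V_{g_0}g_0|^2$ one has, for each fixed $\z=(\z_1,\z_2)$,
\[
|V_{\Psi}a(z,\z)|=\big|\lan \calA_a^{g_0,\,M_{\z_1}T_{-\z_2}g_0}\,\pi(z)g_0,\ M_{\z_1}T_{-\z_2}\pi(z)g_0\ran\big|\le \big\|\calA_a^{g_0,\,M_{\z_1}T_{-\z_2}g_0}\,\pi(z)g_0\big\|_{L^2}
\]
by Cauchy--Schwarz (the second vector has unit norm), and the $L^p$-norm in $z$ of the right-hand side is, after $z\mapsto -z$, exactly $\|\calA_a^{g_0,\,M_{\z_1}T_{-\z_2}g_0}\|_{\calB_p}\le B_a\|g_0\|_{M^1}^2$, uniformly in $\z$. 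Taking the supremum over $\z$ gives $\|a\|_{M^{p,\fy}}\lesssim B_a$ in one stroke for every $p\in[1,\fy]$; note that this is precisely where the hypothesis ``for all windows $\va_1,\va_2\in\mv$'' is exploited, through the $\z$-dependent window $\va_2=M_{\z_1}T_{-\z_2}g_0$. Without this step (or a completed duality argument) your proof of part (2) does not cover $p>2$.
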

\begin{proof}
	We first verify the statement (1).
	Using Lemma \ref{lm-losp} and Proposition \ref{pp-ebsb2}, we conclude that for $p\in [2,\fy]$,
	\be
	\|\calA^{\va_1,\va_2}_a\|_{\calB_p}
	\lesssim 
	\|\calA^{\va_1,\va_2}_a\|_{\calS_p}
	\leq
	C\|a\|_{M^{p,\fy}(\rdd)}\|\va_1\|_{M^1(\rd)}\|\va_2\|_{M^1(\rd)}.
	\ee
	
	For the case $p\in [1,2)$, since $\calS_p\subset \calB_p$ is not valid, we need to deal with this case directly.
	For $a\in M^{p,\fy}(\rdd)$, $\va_1,\va_2\in  M^1(\rd)$, we find $\calA^{\va_1,\va_2}_a\in S_{\fy}=\calL(L^2)$ by using Lemma \ref{lm-losp}.
	For $z=(z_1,z_2)\in \rdd$ and $z'=(z'_1,z'_2)\in \rdd$, we write
	\be
	\lan \calA^{\va_1,\va_2}_a\pi(z)g_0, \pi(z')g_0\ran_{L^2}=\lan a, \overline{V_{\va_1}\pi(z)g_0} V_{\va_2}\pi(z')g_0\ran_{M^{\fy},M^1}.
	\ee
	By Lemma \ref{lm-FTPSTFT}, we have
	\be
	\scrF\big(\overline{V_{\va_1}\pi(z)g_0} V_{\va_2}\pi(z')g_0\big)(x,y)=\big(\overline{V_{\va_1}\va_2}V_{\pi(z)g_0}\pi(z')g_0\big)(-y,x).
	\ee
	Let $b(x,y)=a(y,-x)$. We have $\|b\|_{M^{p,\fy}}= \|a\|_{M^{p,\fy}}$.
	Write
	\be
	\begin{split}
		&
		\lan a, \overline{V_{\va_1}\pi(z)g_0} V_{\va_2}\pi(z')g_0\ran_{M^{\fy},M^1}
		\\
		= &
		\lan \scrF a, \scrF\big(\overline{V_{\va_1}\pi(z)g_0} V_{\va_2}\pi(z')g_0\big)\ran_{M^{\fy},M^1}
		\\
		= &
		\lan \scrF b, \overline{V_{\va_1}\va_2}V_{\pi(z)g_0}\pi(z')g_0\ran_{M^{\fy},M^1}
		=
		\lan \scrF b \cdot V_{\va_1}\va_2, V_{\pi(z)g_0}\pi(z')g_0\ran_{M^{\fy},M^1}.
	\end{split}
    \ee
    By a direct calculation, we conclude that
    \be
    V_{\pi(z)g_0}\pi(z')g_0=e^{2\pi iz_1'(z_2'-z_2)}M_{(z_2,-z_1')}T_{(z'-z)}V_{g_0}g_0.
    \ee
	The above two estimates imply that
	\be
	\begin{split}
	\big|\lan a, \overline{V_{\va_1}\pi(z)g_0} V_{\va_2}\pi(z')g_0\ran_{M^{\fy},M^1}\big|
	= &
	\big|\lan \scrF b \cdot V_{\va_1}\va_2, M_{(z_2,-z_1')}T_{(z'-z)}V_{g_0}g_0 \ran_{M^{\fy},M^1}\big|
	\\
	= &
	\big|\lan \scrF b \cdot V_{\va_1}\va_2, M_{(z_2,-z_1')}T_{(z'-z)}\Phi \ran_{M^{\fy},M^1}\big|
	\\
	= &
	|V_{\Phi}(\scrF b \cdot V_{\va_1}\va_2)(z'-z,(z_2,-z_1'))|.
    \end{split}
	\ee
	Here, we denote $\Phi=|V_{g_0}g_0|$. Combining the convolution relation $M^{p,\fy}\ast M^1\subset M^p$ (see Lemma \ref{lm-covm}) with
	\be
	\|V_{\va_1}\va_2\|_{M^1(\rdd)}\lesssim \|\va_1\|_{M^1(\rd)}\|\va_2\|_{M^1(\rd)},
	\ee
	we conclude that
	\be
	\begin{split}
	\|\scrF b \cdot V_{\va_1}\va_2\|_{M^p(\rdd)}
	= &
	\|b\ast \scrF^{-1}(V_{\va_1}\va_2)\|_{M^p(\rdd)}
	\\
	\lesssim &
	\|b\|_{M^{p,\fy}(\rdd)}\|\scrF^{-1}(V_{\va_1}\va_2)\|_{M^1(\rdd)}
	\\
	= &
	\|b\|_{M^{p,\fy}(\rdd)}\|V_{\va_1}\va_2\|_{M^1(\rdd)}
	\lesssim 
	\|b\|_{M^{p,\fy}(\rdd)}\|\va_1\|_{M^1(\rd)}\|\va_2\|_{M^1(\rd)}.
    \end{split}
	\ee
	Denote by $F=\scrF b \cdot V_{\va_1}\va_2\in M^p(\rdd)$. Let
	\be
	\scrA: (z_1', z_2', z_1, z_2) \longmapsto (z'-z, z_2, -z_1')
	\ee
	be a inverse linear transform from $\rddd$ into $\rddd$.
	We write
	\be
	|V_{\Phi}(\scrF b \cdot V_{\va_1}\va_2)(z'-z,(z_2,-z_1'))|=|\big(V_{\Phi}F\circ \scrA\big) (z_1', z_2', z_1, z_2)|.
	\ee
	Form the following inequality, 
	\be
	|V_{\Phi}F|\lesssim |V_{\Phi}\Phi|\ast|V_{\Phi}F|,
	\ee
	we obtain
	\be
	|V_{\Phi}F\circ \scrA|\lesssim |V_{\Phi}\Phi \circ \scrA|\ast|V_{\Phi}F \circ \scrA|.
	\ee
	Using Young's inequality $L^{r,1}(\rdd\times \rdd)\ast L^{p,p}(\rdd\times \rdd)\subset L^{2,p}(\rdd\times \rdd)$
	with
	$
	1+1/2=1/r+1/p,
	$
	we conclude that
	\be
	\begin{split}
	\|V_{\Phi}F\circ \scrA\|_{L^{2,p}(\rdd\times \rdd)}
	\lesssim &
	\|V_{\Phi}\Phi \circ \scrA\|_{L^{r,1}(\rdd\times \rdd)}
	\|V_{\Phi}F \circ \scrA\|_{L^{p,p}(\rdd\times \rdd)}
	\\
	\lesssim &
	\|V_{\Phi}F \circ \scrA\|_{L^{p,p}(\rdd\times \rdd)}
	\sim 
	\|V_{\Phi}F\|_{L^{p,p}(\rdd\times \rdd)}=\|F\|_{M^p(\rdd)}.
	\end{split}
	\ee
	The desired conclusion follows by
	\be
	\begin{split}
		\|\calA^{\va_1,\va_2}_a\|_{\calB_p}
		= &
		\Big\|\|\calA^{\va_1,\va_2}_a\pi(z)g_0\|_{L^2(\rd)}\Big\|_{L^p(\rdd)}
		\\
		= &
		\Big\|\|\lan \calA^{\va_1,\va_2}_a\pi(z)g_0, \pi(z')g_0\ran \|_{L^2(\rdd)}\Big\|_{L^p(\rdd)}
		\\
		= &
		\Big\|\|V_{\Phi}(\scrF b \cdot V_{\va_1}\va_2)(z'-z,(z_2,-z_1')) \|_{L^2(\rdd)}\Big\|_{L^p(\rdd)}
		\\
		= &
		\|V_{\Phi}F\circ \scrA\|_{L^{2,p}(\rdd\times \rdd)}
		\\
		\lesssim &
		\|F\|_{M^p(\rdd)}=\|\scrF b \cdot V_{\va_1}\va_2\|_{M^p(\rdd)}
		\\
		\lesssim &
		\|b\|_{M^{p,\fy}(\rdd)}\|\va_1\|_{M^1(\rd)}\|\va_2\|_{M^1(\rd)}
		\\
		=&
		\|a\|_{M^{p,\fy}(\rdd)}\|\va_1\|_{M^1(\rd)}\|\va_2\|_{M^1(\rd)}.
	\end{split}
	\ee
	
	Next, we turn to the proof of statement (2). 
	Take $\Psi=|V_{g_0}g_0|^2$.
	A direct calculation  (see also \cite[Lemma 1]{Cordero2005}) yields that
	\be
	\begin{split}
	&|V_{\Psi}a(z,\z)|=|\lan a, M_{\z}T_z(\overline{V_{g_0}g_0}V_{g_0}g_0)\ran|
	\\
	= &
	|\lan a, \overline{V_{g_0}(M_{z_2}T_{z_1}g_0)}V_{M_{\z_1}T_{-\z_2}g_0}(M_{\z_1}T_{-\z_2}M_{z_2}T_{z_1}g_0)\ran|
	\\
	= &
	|\lan A_a^{g_0,M_{\z_1}T_{-\z_2}g_0}M_{z_2}T_{z_1}g_0,\ M_{\z_1}T_{-\z_2}M_{z_2}T_{z_1}g_0\ran|
	\leq 
	\|A_a^{g_0,M_{\z_1}T_{-\z_2}g_0}M_{z_2}T_{z_1}g_0\|_{L^2}.
	\end{split}
	\ee
	For any $\z\in \rdd$, we conclude that
	\be
\begin{split}
	\|V_{\Psi}a(\cdot,\z)\|_{L^p}
	\leq &
	\Big\|\|A_a^{g_0,M_{\z_1}T_{-\z_2}g_0}\pi(z)g_0\|_{L^2}\Big\|_{L^p}
	\\
	= &
	\Big\|\|A_a^{g_0,M_{\z_1}T_{-\z_2}g_0}\pi(z)^*g_0\|_{L^2}\Big\|_{L^p}
	\\
	= &
	\|A_a^{g_0,M_{\z_1}T_{-\z_2}g_0}\|_{\calB_p}
	\leq 
	B_a\|g_0\|_{M^1}\|M_{\z_1}T_{-\z_2}g_0\|_{M^1}
	=
	B_a\|g_0\|_{M^1}^2.
\end{split}
\ee
We have now completed the proof of statement (2).
\end{proof}
\begin{remark}
In contrast to Lemma \ref{lm-losp},
the conclusion in Theorem \ref{thm-loap}  is somewhat surprising, since in general we have  $\calB_p\subsetneq \calS_p$ for $p<2$, and $\calS_p\subsetneq \calB_p$ for $p>2$.
A reason for this surprising conclusion may be from the overly nice property of the window functions. 
\end{remark}

Next, we turn our attention to the Cohen class $Q_{T}$ for $T=\calA^{\va,\va}_a$.
In this case, we assume that $a\geq 0$. Then $T$ is a positive operator.

	Denote by $Q_0=[-1/2,1/2]^{2d}$ the unit cube of $\rdd$ centered at the origin.
	Let $p,q\in (0,\fy]$.
	We recall that the Wiener amalgam space $W(L^p,L^q_v)(\rdd)$ consists of all measurable functions for which
	the following norm are finite:
	\be
	\|f\|_{W(L^p,L^q_v)(\rdd)}:= \bigg(\sum_{k,n\in \zd}\|f T_{(k,n)}\chi_{Q_0}\|^q_{L^p(\rdd)}v(k,n)^q\bigg)^{1/q},
	\ee
	with the usual modification when $q=\fy$.

\begin{lemma}\label{lm-Wiener}
	Let $\Phi(z)=|V_{g_0}g_0(z)|^2=e^{-\pi|z|^2}$ for $z\in \zdd$.  We have the following equivalent norm of $W(L^1, L^{1/2}_v)$
	\be
	\|a\|_{W(L^1, L^{1/2}_v)}\sim \big\|\|aT_{z}\Phi\|_{L^1}\big\|_{L^{1/2}_v(\rdd)}.
	\ee
	If $v$ grows at most polynomial, the window function $\Phi$ can be replaced by any nonzero Schwartz function.
\end{lemma}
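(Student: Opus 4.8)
The plan is to realise the quantity $\big\|\|aT_{z}\Phi\|_{L^1}\big\|_{L^{1/2}_v}$ as a weighted $L^{1/2}$ quasi-norm of a convolution, and then to compare it cube by cube with the discrete norm defining $W(L^1,L^{1/2}_v)$. Since both the amalgam norm and the control function $z\mapsto\|aT_z\Phi\|_{L^1}$ depend only on $|a|$, I may assume $a\ge0$ is measurable. As $\Phi(z)=e^{-\pi|z|^2}$ is even and nonnegative, $\|aT_z\Phi\|_{L^1}=\int_{\rdd}a(w)\Phi(z-w)\,dw=:F(z)=(a\ast\Phi)(z)$, so, writing $a_\nu:=\int_{Q_0+\nu}a(w)\,dw=\|aT_\nu\chi_{Q_0}\|_{L^1}$ for $\nu\in\zdd$, the assertion reduces to the two-sided estimate
\[
\int_{\rdd}F(z)^{1/2}v(z)^{1/2}\,dz\ \sim\ \sum_{\nu\in\zdd}a_\nu^{1/2}\,v(\nu)^{1/2}.
\]
Throughout I would use the elementary bound $v(\nu)\lesssim v(z)\lesssim v(\nu)$ for $z\in Q_0+\nu$, which follows from submultiplicativity and $\sup_{Q_0}v<\infty$ (and the symmetry $-Q_0=Q_0$).

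For the direction ``$\gtrsim$'' I would use that $\Phi$ is bounded below on bounded sets: if $z,w\in Q_0+\nu$ then $|z-w|\le\sqrt{2d}$, hence $\Phi(z-w)\ge e^{-2\pi d}$, so $F(z)\ge e^{-2\pi d}a_\nu$ for every $z\in Q_0+\nu$. Integrating over $Q_0+\nu$ (which has volume $1$) and using the lower bound on $v$ gives $\int_{Q_0+\nu}F^{1/2}v^{1/2}\gtrsim a_\nu^{1/2}v(\nu)^{1/2}$, and summing over $\nu\in\zdd$ yields this half.

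For the direction ``$\lesssim$'' I would decompose $F(z)=\sum_{\mu\in\zdd}\int_{Q_0+\mu}a(w)\Phi(z-w)\,dw$ and, for $z\in Q_0+\nu$ and $w\in Q_0+\mu$, use $|z-w|\ge(|\mu-\nu|-\sqrt{2d})_{+}$ to obtain $F(z)\le\sum_{\mu}\rho(\mu-\nu)\,a_\mu$, where $\rho(\mu):=e^{-\pi((|\mu|-\sqrt{2d})_{+})^{2}}$ is even with Gaussian-type decay on $\zdd$. Two points need care. First, the quasi-triangle inequality for $L^{1/2}$: this is handled by the subadditivity $(\sum_\mu b_\mu)^{1/2}\le\sum_\mu b_\mu^{1/2}$ for $b_\mu\ge0$, giving $F(z)^{1/2}\le\sum_\mu\rho(\mu-\nu)^{1/2}a_\mu^{1/2}$ with the right-hand side independent of $z$ on $Q_0+\nu$. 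Second, absorbing the weight: from $v(\nu)^{1/2}\le v(\mu)^{1/2}v(\nu-\mu)^{1/2}$ and the fact that $\rho$ decays faster than any (continuous) submultiplicative weight grows — such a weight satisfies $v(z)\le Ce^{a|z|}$ — the constant $C_v:=\sum_{\mu\in\zdd}v(\mu)^{1/2}\rho(\mu)^{1/2}$ is finite. Combining these and interchanging the (nonnegative) sums,
\begin{align*}
\int_{\rdd}F^{1/2}v^{1/2}
&=\sum_\nu\int_{Q_0+\nu}F(z)^{1/2}v(z)^{1/2}\,dz
\ \lesssim\ \sum_\nu v(\nu)^{1/2}\sum_\mu\rho(\mu-\nu)^{1/2}a_\mu^{1/2}\\
&=\sum_\mu a_\mu^{1/2}\sum_\nu v(\nu)^{1/2}\rho(\mu-\nu)^{1/2}
\ \le\ C_v\sum_\mu a_\mu^{1/2}v(\mu)^{1/2},
\end{align*}
and squaring gives the remaining half. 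Note that this argument deliberately avoids a continuous Young inequality (which would fail for the exponent $1/2$): everything is reduced to discrete sums and the subadditivity of $t\mapsto t^{1/2}$.

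Finally, for the last sentence I would run the same argument with $\Phi$ replaced by an arbitrary nonzero Schwartz function $\psi$: here $\|aT_z\psi\|_{L^1}$ is the convolution of $a$ with a reflection of $|\psi|$, and $|\psi|$ still obeys $|\psi(z)|\lesssim_K(1+|z|)^{-K}$ for every $K$, so the majorant $\rho$ becomes polynomially decaying, which for $K$ large enough dominates the now polynomial growth of $v^{1/2}$ — this is exactly where the polynomial-growth hypothesis on $v$ is used, and the analogue of $C_v$ is again finite. The lower bound uses only that $\psi$, being continuous and nonzero, is bounded below on some ball, so that $F(z)$ controls a fixed-scale average of $a$ near $z$, which in turn dominates the $a_\mu$ for $\mu$ near $\nu$; equivalently one may quote the window-independence of Wiener amalgam spaces. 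The genuinely delicate point in the whole proof is pure bookkeeping: carrying the weight $v$ through the off-diagonal convolution sum and checking convergence of the weighted $\ell^{1/2}$-sum of the decay majorant (and, in the Schwartz case, the fixed-scale-averaging step in the lower bound); once that is in place, the cube decomposition is entirely routine.
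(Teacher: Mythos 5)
Your proof is correct and takes essentially the same route as the paper's: a cube decomposition comparing $\|aT_z\Phi\|_{L^1}$ with the unit-cube integrals $a_\nu$, the lower bound from the Gaussian's positive lower bound on a fixed-size cube, and the upper bound from an off-diagonal Gaussian majorant absorbed by the submultiplicative weight, with the rapid (resp.\ polynomial) decay versus weight growth handled exactly as in the paper for the Schwartz case. The only cosmetic difference is that you prove the weighted $l^{1/2}_v$ convolution step inline via subadditivity of $t\mapsto t^{1/2}$ and Tonelli, whereas the paper introduces the auxiliary Gaussians $\Phi_1,\Phi_2$ and quotes $l^{1/2}_v\ast l^{1/2}_v\subset l^{1/2}_v$.
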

\begin{proof}
	For $z\in Q_0+(k,n)$, we have $T_{(k,n)}\chi_{Q_0}\lesssim T_z\Phi$. Then
	\be
	\|a T_{(k,n)}\chi_{Q_0}\|_{L^p(\rdd)}\lesssim \|a T_z\Phi \|_{L^p(\rdd)},\ \ \ z\in Q_0+(k,n).
	\ee
	From this and the fact $v(z)\sim v(k,n)$ for $z\in Q_0+(k,n)$. We conclude that
	\be
	\bigg(\sum_{k,n\in \zd}\|a T_{k,n}\chi_{Q_0}\|^{1/2}_{L^1(\rdd)}v(k,n)^{1/2}\bigg)^{2}\lesssim \big\|\|aT_{z}\Phi\|_{L^1}\big\|_{L^{1/2}_v(\rdd)}.
	\ee
	This estimate is also valid when $\Phi$ is replaced by any nonzero Schwartz function, since that any non-zero continuous function has a positive lower bound on a sufficiently small cube.
	
	Let $\Phi_1(z)=e^{\frac{-\pi|z|^2}{2}}$ and $\Phi_2(z)=e^{\frac{-\pi|z|^2}{4}}$.
	For the inverse direction, notice that for $z\in Q_0+(k,n)$ we have
	\be
	T_{z}\Phi\lesssim T_{(k,n)}\Phi_1\ \ \ \text{and} \ \ \ \|a T_{z}\Phi\|_{L^1(\rdd)}\lesssim \|a T_{(k,n)}\Phi_1\|_{L^1(\rdd)}.
	\ee
	Using a similar method as above, we find that
	\be
	\big\|\|aT_{z}\Phi\|_{L^1}\big\|_{L^{1/2}_v(\rdd)}\lesssim 
	\bigg(\sum_{k,n\in \zd}\|a T_{(k,n)}\Phi_1\|^{1/2}_{L^1(\rdd)}v(k,n)^{1/2}\bigg)^{2}.
	\ee
	
	Next, we write
	\be
	\begin{split}
	\|aT_{(k,n)}\Phi_1\|_{L^1}
	\leq &
	\sum_{j,l\in \zd}\|aT_{(k,n)}\Phi_1\cdot  T_{(j,l)}\chi_{Q_0}\|_{L^1}
	\\
	\leq &
	\sum_{j,l\in \zd}\|a T_{(j,l)}\chi_{Q_0}\|_{L^1}\|T_{(k,n)}\Phi_1\cdot T_{(j,l)}\chi_{Q_0}\|_{L^{\fy}}
	\\
	\lesssim &
	\sum_{j,l\in \zd}\|a T_{(j,l)}\chi_{Q_0}\|_{L^1}\Phi_2((k,n)-(j,l)).
	\end{split}
	\ee
	Form this and a convolution inequality $l^{1/2}_v\ast l^{1/2}_v\subset l^{1/2}_v$, we conclude that
	\be
	\begin{split}
	&\bigg(\sum_{k,n\in \zd}\|a T_{(k,n)}\Phi_1\|^{1/2}_{L^1(\rdd)}v(k,n)^{1/2}\bigg)^{2}
	\\
	\lesssim &
	\bigg(\sum_{k,n\in \zd}\|a T_{(k,n)}\chi_{Q_0}\|^{1/2}_{L^1(\rdd)}v(k,n)^{1/2}\bigg)^{2}\|\Phi_2\|_{l^{1/2}_v}\lesssim  \bigg(\sum_{k,n\in \zd}\|a T_{(k,n)}\chi_{Q_0}\|^{1/2}_{L^1(\rdd)}v(k,n)^{1/2}\bigg)^{2}.
	\end{split}
	\ee
	For the case of submultiplicative weight $v$ with at most polynomial growth, we notice that
	\be
	\Phi(z)\leq C_{\Phi,N}(1+|z|)^{-N},\ \ \ \ z\in \zdd, N\geq 1,
	\ee
	for any Schwartz function $\Phi$.
	Then the above argument still works in this case. We have now completed the whole proof.
\end{proof}

\begin{proposition}\label{pp-CohenLocal}
	Let $1\leq p,q\leq \fy$. Suppose that $a\geq 0$ be a measurable function on $\rdd$.
	Let $v$ be a submultiplicative weight.	
	For any $\va\in \calS(\rd)\bs\{0\}$ and $\Psi=|V_{\va}g_0|^2$, we have
	\be
\sqrt{\calA^{\va,\va}_a}\in \calB_1^v\ \text{and}\  a\in M^{\fy}
\Longleftrightarrow 
\big\|\|aT_{z}\Psi\|_{L^1}\big\|_{L^{1/2}_{v^2}(\rdd)}<\fy
\ee
	with
	\be
	\|\sqrt{\calA^{\va,\va}_a}\|_{\calB_1^v}\sim \sqrt{\big\|\|aT_{z}\Psi\|_{L^1}\big\|_{L^{1/2}_{v^2}(\rdd)}}.
	\ee
	Furthermore, if $\va=g_0$ or $v$ has at most polynomial growth, we have
	\be
	\sqrt{\calA^{\va,\va}_a}\in \calB_1^v\ \text{and}\  a\in M^{\fy}
	\Longleftrightarrow 
	a\in W(L^1, L^{1/2}_{v^2}).
	\ee
\end{proposition}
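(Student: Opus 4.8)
The plan is to compute Cohen's class distribution of the localization operator $\calA^{\va,\va}_a$ evaluated at the Gaussian $g_0$, feed the result into the identity $\|\sqrt{T}\|_{\bv}=\|\mfV_{\sqrt{T}}g_0\|_{\lv(\rtd;L^2)}=\|\sqrt{Q_Tg_0}\|_{L^1_v(\rdd)}$ — which holds for any bounded positive operator $T$, exactly as in the proof of Theorem \ref{thm-Mc} — and conclude with Lemma \ref{lm-Wiener}. First I would establish the standard formula for the Cohen class of a localization operator: from the weak definition $\lan \calA^{\va,\va}_af,g\ran=\lan a,\overline{V_\va f}\,V_\va g\ran$ and the elementary identity $|V_\va(\pi(z)^*h)(w)|=|V_\va h(z+w)|$ (valid since $\pi(z)\pi(w)$ is a unimodular multiple of $\pi(z+w)$), one obtains
\be
Q_{\calA^{\va,\va}_a}f(z)=\lan \calA^{\va,\va}_a\pi(z)^*f,\pi(z)^*f\ran_{L^2}=\int_{\rdd}a(w)\,|V_\va f(z+w)|^2\,dw .
\ee
Taking $f=g_0$ and writing $\Psi=|V_\va g_0|^2$ — a nonnegative, nonzero Schwartz function on $\rdd$, since $\va\in\calS(\rd)$ — this reduces to the pointwise identity
\be
Q_{\calA^{\va,\va}_a}g_0(z)=\int_{\rdd}a(w)\,\Psi(w+z)\,dw=\|aT_{-z}\Psi\|_{L^1(\rdd)},
\ee
which is legitimate for every measurable $a\ge0$, since it only pairs $a$ with a translate of the Schwartz function $\Psi$.

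Second, I would rewrite the left member as the $\bv$-quasinorm. Since $\sqrt{\calA^{\va,\va}_a}$ is self-adjoint, $\|\mfV_{\sqrt{\calA^{\va,\va}_a}}g_0(z)\|_{L^2}^2=Q_{\calA^{\va,\va}_a}g_0(z)$, so, after the change of variables $z\mapsto-z$ and using $v(-z)=v(z)$,
\be
\|\sqrt{\calA^{\va,\va}_a}\|_{\bv}=\|\mfV_{\sqrt{\calA^{\va,\va}_a}}g_0\|_{\lv(\rtd;L^2)}=\int_{\rdd}\sqrt{\|aT_{z}\Psi\|_{L^1}}\,v(z)\,dz=\sqrt{\big\|\|aT_{z}\Psi\|_{L^1}\big\|_{L^{1/2}_{v^2}(\rdd)}} ,
\ee
the last equality being simply the definition of the $L^{1/2}_{v^2}$-quasinorm (the weight appears as $v$ because $(v^2)^{1/2}=v$). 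This already gives the asserted norm equivalence — in fact an equality — provided $\calA^{\va,\va}_a$ is a bounded positive operator, so that $\sqrt{\calA^{\va,\va}_a}$ is meaningful. If, in addition, $a\in M^\infty$, then $\calA^{\va,\va}_a\in\calL(L^2)$ by Lemma \ref{lm-losp}(1) with $p=\fy$ (recall $\va\in\calS(\rd)\subset M^1(\rd)$), and it is positive because $a\ge0$; thus the implication ``$\sqrt{\calA^{\va,\va}_a}\in\bv$ and $a\in M^\infty$ $\Rightarrow$ $\big\|\|aT_{z}\Psi\|_{L^1}\big\|_{L^{1/2}_{v^2}}<\fy$'', together with its quantitative form, is immediate from the displayed identity.

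Third, for the converse I must first extract $a\in M^\infty$ from the finiteness of $\big\|\|aT_{z}\Psi\|_{L^1}\big\|_{L^{1/2}_{v^2}}$; this is the one genuinely analytic point. Noting that $v^2$ is again submultiplicative and that $\Psi$, being continuous and not identically zero, has a positive lower bound on some small cube near the origin, the first half of the argument in the proof of Lemma \ref{lm-Wiener} (the comparison of $T_k\chi_{Q_0}$ with $T_z\Psi$ for $z$ in the cube $Q_0+k$, $k$ in the integer lattice) yields $\|a\|_{W(L^1,L^{1/2}_{v^2})}\lesssim\big\|\|aT_{z}\Psi\|_{L^1}\big\|_{L^{1/2}_{v^2}}<\fy$. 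Since $v^2\ge1$ we have $W(L^1,L^{1/2}_{v^2})\subset W(L^1,L^\infty)$, and estimating the STFT of $a$ against a Gaussian window gives $W(L^1,L^\infty)\subset M^\infty$, so $a\in M^\infty$. With this in hand, $\calA^{\va,\va}_a$ is bounded and positive as above, hence $\sqrt{\calA^{\va,\va}_a}$ is legitimate, and the identity of the previous paragraph forces $\|\sqrt{\calA^{\va,\va}_a}\|_{\bv}<\fy$, i.e.\ $\sqrt{\calA^{\va,\va}_a}\in\bv$. This establishes the first equivalence, and the quantitative estimate follows from the same identity together with Theorem \ref{thm-Mc}.

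Finally, the last assertion is obtained by combining the first equivalence with Lemma \ref{lm-Wiener} for the submultiplicative weight $v^2$: if $\va=g_0$ then $\Psi=|V_{g_0}g_0|^2=e^{-\pi|\cdot|^2}$, precisely the window of that lemma, while if $v$ (and hence $v^2$) has at most polynomial growth, $\Psi$ is an admissible nonzero Schwartz window, covered by the last sentence of Lemma \ref{lm-Wiener}; in either case $\big\|\|aT_{z}\Psi\|_{L^1}\big\|_{L^{1/2}_{v^2}}\sim\|a\|_{W(L^1,L^{1/2}_{v^2})}$, so the right-hand side of the first equivalence is equivalent to $a\in W(L^1,L^{1/2}_{v^2})$. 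The step I expect to be the main obstacle is the implication ``$\big\|\|aT_{z}\Psi\|_{L^1}\big\|_{L^{1/2}_{v^2}}<\fy$ $\Rightarrow$ $a\in M^\infty$'', i.e.\ controlling the global size of the symbol so that $\calA^{\va,\va}_a$ is a bona fide bounded operator and its positive square root exists; everything else amounts to unwinding the definitions of $Q_T$ and of the $\bv$-norm and quoting Theorems \ref{thm-M} and \ref{thm-Mc} and Lemma \ref{lm-Wiener}.
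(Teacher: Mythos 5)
Your proposal is correct and follows essentially the same route as the paper: compute $\|\sqrt{\calA^{\va,\va}_a}\pi(z)^*g_0\|_{L^2}^2=Q_{\calA^{\va,\va}_a}g_0(z)=\|aT_{-z}\Psi\|_{L^1}$, rewrite the $L^1_v$-norm as $\sqrt{\big\|\|aT_z\Psi\|_{L^1}\big\|_{L^{1/2}_{v^2}}}$, deduce $a\in M^{\fy}$ from the one direction of Lemma \ref{lm-Wiener} valid for arbitrary nonzero Schwartz windows, invoke Lemma \ref{lm-losp} for boundedness and positivity, and finish with Lemma \ref{lm-Wiener} for the Wiener amalgam equivalence. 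The only differences are cosmetic: you derive the key identity from the weak definition and the covariance of the STFT instead of the operator covariance $\pi(z)\calA^{\va,\va}_a\pi(z)^*=\calA^{\va,\va}_{T_za}$, and you embed $W(L^1,L^{1/2}_{v^2})$ into $M^{\fy}$ through $W(L^1,L^{\fy})$ rather than through $L^1$ as the paper does.
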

\begin{proof}
	Checking the proof of Lemma \ref{lm-Wiener}, we find that $\big\|\|aT_{z}\Psi\|_{L^1}\big\|_{L^{1/2}_{v^2}(\rdd)}<\fy$ implies 
	\be
	a\in W(L^1, L^{1/2}_{v^2})\subset W(L^1, L^{1/2})\subset W(L^1, L^{1})=L^1\subset M^{\fy}.
	\ee
	If $a\in M^{\fy}$ and $a\geq 0$, 
	by Lemma \ref{lm-losp} we have $\calA^{\va,\va}_a$ be a positive operator in $\calL(L^2)$. Write
	\be
	\begin{split}
	\|\sqrt{\calA^{\va,\va}_a}\pi(z)^*g_0\|_{L^2}^2
	= &
	\lan \calA^{\va,\va}_a\pi(z)^*g_0, \pi(z)^*g_0\ran_{L^2}
	\\
	= &
	\lan \pi(z)\calA^{\va,\va}_a\pi(z)^*g_0, g_0\ran_{L^2}
	\\
	= &
	\lan \calA^{\va,\va}_{T_za}g_0, g_0\ran_{L^2}
	=\lan T_za, \Psi\ran_{\calS',\calS}=\|aT_{-z}\Psi\|_{L^1}.
	\end{split}
    \ee
    The desired conclusion follows by
    \be
    \begin{split}
    	\|\sqrt{\calA^{\va,\va}_a}\|_{B_1^v}
    	=
    \|\|\sqrt{\calA^{\va,\va}_a}\pi(z)^*g_0\|_{L^2}\|_{L^1_v(\rdd)}
    = &
    \|\sqrt{\|aT_{-z}\Psi\|_{L^1}}\|_{L^1_v(\rdd)}
    = 
    \sqrt{\big\|\|aT_{z}\Psi\|_{L^1}\big\|_{L^{1/2}_{v^2}(\rdd)}}.
    \end{split}
    \ee
    
    If $\va=g_0$, notice that $\Psi(z)=|V_{g_0}g_0(z)|^2=e^{-\pi|z|^2}$, we obtain
    $\|a\|_{W(L^1, L^{1/2}_{v^2})}\sim \big\|\|aT_{z}\Psi\|_{L^1}\big\|_{L^{1/2}_{v^2}(\rdd)}$ from Lemma \ref{lm-Wiener}.
    If $v$ has at most polynomial growth, the same conclusion also follows by Lemma \ref{lm-Wiener}.
    Thus, the equivalent relation $\sqrt{\calA^{\va,\va}_a}\in \calB_1^v \Longleftrightarrow a\in W(L^1, L^{1/2}_{v^2})$ is valid.
\end{proof}

As a corollary, we have the following improvement of \cite[Proposition 8.4]{Skrettingland2022JoFAaA}.
\begin{corollary}
	Let $\va\in \calS(\rd)$. Let $a\in W(L^1, L^{1/2}_{v^2})$ be a non-negative function. Suppose that $v$ is a submultiplicative weight with at most polynomial growth and $m\in \calM_v$.
	We have
	\be
	\|f\|_{\mpqm(\rd)}\sim \|\sqrt{Q_{\calA^{\va,\va}_a}f(z)}\|_{\lpqm(\rdd)}.
	\ee
\end{corollary}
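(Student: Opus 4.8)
The plan is to obtain this as an immediate consequence of Theorem~\ref{thm-Mc} and Proposition~\ref{pp-CohenLocal}. Put $T=\calA^{\va,\va}_a$. First I would check that $T$ satisfies the hypotheses of Theorem~\ref{thm-Mc}: since $a\in W(L^1,L^{1/2}_{v^2})\subset W(L^1,L^1)=L^1\subset M^{\fy}$ (as noted in the proof of Proposition~\ref{pp-CohenLocal}), Lemma~\ref{lm-losp} gives $T\in\calS_1\subset\calL(L^2)$, and the assumption $a\ge 0$ makes $T$ a positive operator, because $\lan Tf,f\ran=\lan a,|V_\va f|^2\ran\ge 0$ for all $f\in L^2$.

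Next, since $v$ is submultiplicative with at most polynomial growth, the last equivalence in Proposition~\ref{pp-CohenLocal} applies: the hypothesis $a\in W(L^1,L^{1/2}_{v^2})$ (together with $a\in M^{\fy}$, already verified) is equivalent to $\sqrt{\calA^{\va,\va}_a}\in\calB_1^v$. Hence $\sqrt{T}\in\calB_1^v$, which is precisely statement~(3) of Theorem~\ref{thm-Mc} applied to the positive operator $T$.

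The implication $(3)\Rightarrow(1)$ of Theorem~\ref{thm-Mc} then yields
\[
\|\sqrt{Q_Tf}\|_{\lpqm(\rdd)}\sim\|f\|_{\mpqm(\rd)}\qquad\text{for all }f\in\widetilde{\mpqm},\ 1\le p,q\le\fy,\ m\in\calM_v,
\]
which is the claimed equivalence; recall from the remark following Theorem~\ref{thm-Mc} that $\widetilde{\mpqm}=\mpqm$ whenever $p,q<\fy$, so in that range the statement holds on all of $\mpqm$. If desired, the sharp two-sided constants of \eqref{thm-Mc-1} also transfer, with $\|\sqrt{Q_Tg_0}\|_{L^1_v(\rdd)}\sim\|a\|_{W(L^1,L^{1/2}_{v^2})}^{1/2}$ by Proposition~\ref{pp-CohenLocal} and Lemma~\ref{lm-Wiener}.

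There is no real obstacle here: the two cited results do all the work. The only points requiring attention are verifying positivity and $T\in\calS_1$ so that $Q_T$, $\sqrt{T}$ and Theorem~\ref{thm-Mc} are all legitimate, and keeping track of the domain $\widetilde{\mpqm}$ versus $\mpqm$ at the endpoints $p=\fy$ or $q=\fy$. The improvement over \cite[Proposition~8.4]{Skrettingland2022JoFAaA} lies in replacing the earlier hypothesis on $a$ by the sharp Wiener-amalgam condition $a\in W(L^1,L^{1/2}_{v^2})$, uniformly in $p$, $q$ and $m$.
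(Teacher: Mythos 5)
Your argument is correct and follows the paper's own route: invoke Proposition \ref{pp-CohenLocal} (using $a\ge 0$ and $a\in W(L^1,L^{1/2}_{v^2})\subset L^1\subset M^{\fy}$, plus the polynomial-growth assumption on $v$) to get that $\calA^{\va,\va}_a$ is a positive bounded operator with $\sqrt{\calA^{\va,\va}_a}\in\calB_1^v$, and then apply Theorem \ref{thm-Mc}. Your extra remarks on $T\in\calS_1$, the explicit positivity check, and the $\widetilde{\mpqm}$ versus $\mpqm$ caveat at the endpoints are harmless elaborations of the same proof.
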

\begin{proof}
	By Proposition \ref{pp-CohenLocal}, we find that $\calA^{\va,\va}_a$ is a positive operator in $\calL(L^2)$, and
	 $\sqrt{\calA^{\va,\va}_a}\in \calB_1^v$. 
	Then the desired conclusion follows by Theorem \ref{thm-Mc}.
\end{proof}

\subsection*{Acknowledgements}
This work was supported by the Natural Science Foundation of Fujian Province
[2020J01708, 2020J01267,2021J011192].

\bibliographystyle{abbrv}

\begin{thebibliography}{10}
	
	\bibitem{Benyi2007JFA}
	{\'{A}}.~B{\'{e}}nyi, K.~Gr{\"{o}}chenig, K.~A. Okoudjou, and L.~G. Rogers.
	\newblock {Unimodular Fourier multipliers for modulation spaces}.
	\newblock {\em Journal of Functional Analysis}, 246(2):366--384, may 2007.
	
	\bibitem{Cordero2003}
	E.~Cordero and K.~Gr{\"o}chenig.
	\newblock Time--frequency analysis of localization operators.
	\newblock {\em Journal of Functional Analysis}, 205(1):107--131, 2003.
	
	\bibitem{Cordero2005}
	E.~Cordero and K.~Gr{\"o}chenig.
	\newblock Necessary conditions for {S}chatten class localization operators.
	\newblock {\em Proceedings of the American Mathematical Society},
	133(12):3573--3579, 2005.
	
	\bibitem{Feichtinger1983TRUoV}
	H.~G. Feichtinger.
	\newblock {Modulation Spaces on Locally Compact Abelian Groups}.
	\newblock {\em Technical Report, University of Vienna}, 1983.
	
	\bibitem{Groechenig2007POPDEaTA}
	K.~Gr{\"o}chenig.
	\newblock Weight functions in time-frequency analysis.
	\newblock {\em Pseudodifferential Operators: Partial Differential Equations and
		Time-Frequency Analysis}, 52:343--366, 2007.
	
	\bibitem{GrochenigBook2013}
	K.~Gr{\"{o}}chenig.
	\newblock {\em {Foundations of Time-Frequency Analysis}}.
	\newblock Springer Science {\&} Business Media, 2013.
	
	\bibitem{GroechenigHeil1999IEOT}
	K.~Gr\"{o}chenig and C.~Heil.
	\newblock Modulation spaces and pseudodifferential operators.
	\newblock {\em Integral Equations Operator Theory}, 34(4):439--457, 1999.
	
	\bibitem{GuoFanWuZhao2018SM}
	W.~Guo, D.~Fan, H.~Wu, and G.~Zhao.
	\newblock {Sharp weighted convolution inequalities and some applications}.
	\newblock {\em Studia Mathematica}, 241(3):201--239, 2018.
	
	\bibitem{Gut2013}
	A.~Gut.
	\newblock {\em Probability: a graduate course}.
	\newblock Springer Texts in Statistics. Springer, New York, second edition,
	2013.
	
	\bibitem{Heil2008}
	C.~Heil.
	\newblock An introduction to weighted {W}iener amalgams. {I}n {M}. {K}rishna,
	{R}. {R}adha, and {S}. {T}hangavelu,editors.
	\newblock {\em Wavelets and their Applications (Chennai, January 2002)}, pages
	183--216., Allied Publishers, NewDelhi, 2003.
	
	\bibitem{LuefSkrettingland2018JdMPeA}
	F.~Luef and E.~Skrettingland.
	\newblock Convolutions for localization operators.
	\newblock {\em Journal de Math{\'e}matiques Pures et Appliqu{\'e}es},
	118:288--316, 2018.
	
	\bibitem{Luef2019}
	F.~Luef and E.~Skrettingland.
	\newblock Mixed-state localization operators: Cohen’s class and trace class
	operators.
	\newblock {\em Journal of Fourier Analysis and Applications}, 25(4):2064--2108,
	2019.
	
	\bibitem{Nicola2009}
	E.~C. Nicola.
	\newblock Sharpness of some properties of {W}iener amalgam and modulation
	spaces.
	\newblock {\em Bulletin of the Australian Mathematical Society},
	80(1):p.105--116, 2009.
	
	\bibitem{M.RuzhanskyM.Sugimoto2012EEoHaST}
	M.~Ruzhansky, M.~Sugimoto, and B.~Wang.
	\newblock {Modulation spaces and nonlinear evolution equations}.
	\newblock {\em Evolution Equations of Hyperbolic and Schr{\"{o}}dinger Type},
	301:267--283, 2012.
	
	\bibitem{Skrettingland2022JoFAaA}
	E.~Skrettingland.
	\newblock Equivalent norms for modulation spaces from positive {C}ohen’s
	class distributions.
	\newblock {\em Journal of Fourier Analysis and Applications}, 28(2):1--34,
	2022.
	
	\bibitem{WangHudzik2007JDE}
	B.~Wang and H.~Hudzik.
	\newblock {The global Cauchy problem for the NLS and NLKG with small rough
		data}.
	\newblock {\em Journal of Differential Equations}, 232(1):36--73, jan 2007.
	
\end{thebibliography}

\end{document}